\documentclass[11pt,a4paper,reqno]{amsart}
\usepackage[T1]{fontenc}
\usepackage{lmodern}
\usepackage[english]{babel}
\usepackage[margin=25mm,headheight=15pt]{geometry}
\usepackage{amsmath,amssymb,amsthm,mathtools,bm}
\usepackage{booktabs,array,longtable,enumitem,microtype}
\usepackage[numbers,sort&compress]{natbib}
\usepackage[colorlinks=true,linkcolor=blue,citecolor=blue,urlcolor=blue]{hyperref}
\usepackage{fancyhdr}
\usepackage{tikz}
\allowdisplaybreaks[2]
\setlist[enumerate]{itemsep=4pt,topsep=6pt}
\newcommand{\dd}{\,\mathrm d}
\newcommand{\vol}{\operatorname{Vol}}
\newcommand{\supp}{\operatorname{supp}}
\newcommand{\loc}{\mathrm{loc}}
\newcommand{\R}{\mathbb R}
\newcommand{\A}{\mathcal A}
\newcommand{\ip}[2]{\langle #1,#2\rangle}
\newcommand{\Deltam}{\Delta_m}
\newtheorem{theorem}{Theorem}[section]
\newtheorem{proposition}[theorem]{Proposition}
\newtheorem{lemma}[theorem]{Lemma}
\newtheorem{corollary}[theorem]{Corollary}
\theoremstyle{definition}
\newtheorem{definition}[theorem]{Definition}
\theoremstyle{remark}
\newtheorem{remark}[theorem]{Remark}
\newtheorem{example}[theorem]{Example}
\numberwithin{equation}{section}
\title[Sharp volume tests for gradient reactions]{Sharp Volume Tests and Entropy Thresholds for Quasilinear Elliptic Inequalities with Gradient Reactions}
\author{Yongheng Qin}
\address{School of Mathematical Sciences and LPMC, Nankai University,
	Tianjin 300071, P. R. China}
\email{1120220027@mail.nankai.edu.cn}
\author{Yuhua Sun}
\address{School of Mathematical Sciences and LPMC, Nankai University,
	Tianjin 300071, P. R. China}
\email{sunyuhua@nankai.edu.cn}
\author{Jie Xiao}
\address{Department of Mathematics and Statistics, Memorial University of Newfoundland,\\ St. John's, Newfoundland and Labrador, Canada A1C 5S7}
\email{jxiao@math.mun.ca}
\thanks{Y. Sun was funded by the National Natural Science Foundation of
	China (Grant No. 12371206) and the Fundamental Research Funds for the
	Central Universities, No. 050-63263078.}
\thanks{J. Xiao was supported in part by the Natural Sciences and Engineering Research Council of Canada (NSERC), Discovery Grant 202979.}
\makeatletter
\@ifundefined{subjclassname@2020}{\@namedef{subjclassname@2020}{\textup{2020} Mathematics Subject Classification}}{}
\makeatother
\subjclass[2020]{Primary 35B53; Secondary 35J92, 58J05, 31C12}
\keywords{Liouville theorem, quasilinear elliptic inequality, gradient nonlinearity,
	volume growth, weighted manifold, iterated logarithms, volume entropy}
\hypersetup{pdftitle={Sharp Volume Tests and Entropy Thresholds for Quasilinear Elliptic Inequalities with Gradient Reactions},pdfauthor={Yongheng Qin, Yuhua Sun, and Jie Xiao},pdfkeywords={Liouville theorem, m-Laplacian, gradient nonlinearity, volume growth, weighted manifolds}}
\begin{document}
	\begin{abstract}
		We investigate positive weak solutions of
		$-\Delta_m u\ge u^p|\nabla u|^q$ on complete noncompact
		Riemannian manifolds without curvature, volume doubling, or Poincar\'e
		assumptions. For the superhomogeneous mixed reaction we derive a
		curvature-free integral volume test by combining two first-order
		estimates with a recurrence for the mass of a \emph{fixed} density.
		The same approach organizes the pure-gradient and capacity criteria
		and yields sharp finite hierarchies of iterated-logarithmic growth
		bounds, with counterexamples on smooth model manifolds. In the
		subhomogeneous region, vanishing lower growth of
		$\log V(R)/R^{m-q}$ forces the Liouville property. On the
		homogeneous line $p+q=m-1$, we obtain the optimal strict lower-volume-
		entropy threshold
		$(p+1)((m-1)/p)^{p/(p+1)}$ for $p>0$ (and $1$
		for $p=0$), together with examples at equality. The existing six-region classification is not claimed as new;
		the advances concern refined volume hypotheses and their model
		sharpness. The tests remain
		valid for smooth weighted measures and for locally bounded positive
		Sobolev solutions. Sharpness is understood within the stated
		model families; no characterization from volume alone on an
		individual manifold is asserted.
	\end{abstract}
	\maketitle
	\enlargethispage{3pt}
	
	\section{Introduction}
	\subsection{The problem and its geometric setting}
	Let $m>1$. We consider the inequality
	\begin{equation}\label{eq:main}
		-\Delta_m u\ge u^p|\nabla u|^q,\qquad
		\Delta_m u=\operatorname{div}(|\nabla u|^{m-2}\nabla u),
	\end{equation}
	on a smooth, connected, complete noncompact Riemannian manifold $M$
	without boundary. The parameters $p,q$ are real. For a fixed point $o\in M$,
	write $r(x)=d(o,x)$, $B_R=B(o,R)$, and $V(R)=\vol(B_R)$.
	Our question is how the growth of $V$ alone can force every positive
	solution to be constant, or exclude positive solutions altogether.
	No pointwise or integral bound on Ricci or sectional curvature is imposed.
	The results do not require volume doubling, a global Poincar\'e inequality,
	or bounds for $\Delta r$. No global boundedness or global integrability
	of the solution is assumed.
	
	The factor $|\nabla u|^q$ changes both the analytic estimates and the
	meaning of the conclusion. When $q>0$, positive constants solve
	\eqref{eq:main}; the appropriate Liouville property is constancy.
	When $q\le0$, the conclusion is nonexistence in the admissible class
	specified below. Negative gradient powers are interpreted through local
	integrability, without assigning them the value zero at critical points.
	This convention is essential for the singular parameter range.
	
	\subsection{Test functions, volume growth, and earlier results}
	Nonlinear test functions and capacity estimates form a general approach
	to nonexistence for differential inequalities; see Mitidieri--Pohozaev
	\cite{MP2001} and Serrin--Zou \cite{SZ2002}.
	The role of volume in potential theory is developed in
	Grigor'yan \cite{Grig1999}, Holopainen \cite{Hol1999}, and
	Rigoli--Setti \cite{RS2001}.
	For nonlinear inequalities on complete manifolds, the distinction between
	volume information and curvature information is substantial:
	first-order weak tests can be used across the cut locus, whereas
	pointwise differentiation of distance cutoffs generally needs additional
	geometric control.
	
	For $-\Delta u\ge u^p$, $p>1$,
	Grigor'yan--Sun \cite{GS2014} proved nonexistence under
	\begin{equation}\label{eq:GSvolume}
		V(R)\le C R^{2p/(p-1)}(\log R)^{1/(p-1)}.
	\end{equation}
	Sun \cite{Sun2015P} obtained the corresponding quasilinear result.
	Wang--Xiao \cite{WX2016} developed flexible model constructions for
	the sharpness of the power and first logarithmic exponent.
	Zhang \cite[Theorem 1.2]{Zhang2015} gave a different, curvature-free
	integral sufficient condition in the semilinear case:
	\[
	\liminf_{t\downarrow0}t^{p/(p-1)}
	\int_1^\infty\frac{V(R)}{R^{(3p-1)/(p-1)+t}}\dd R<\infty.
	\]
	Thus integral volume conditions already have a substantial history.
	Mastrolia--Monticelli--Punzo \cite{MMP2015} treat the interaction
	between geometry and a variable potential, while
	Grigor'yan--Sun--Verbitsky \cite{GSV2020} characterize superlinear
	inequalities through Green kernels and obtain volume formulations under
	additional analytic hypotheses.
	
	Earlier gradient-dependent inequalities on manifolds were studied by
	Sun \cite{Sun2015}. Bidaut-V\'eron--Garc\'ia-Huidobro--V\'eron
	\cite{BGV2019} develop the Euclidean product-reaction theory, including
	the distinction between solution estimates and supersolution Liouville ranges.
	The closest general predecessor is Sun--Xiao--Xu \cite{SXX2022},
	which studies exactly \eqref{eq:main} throughout the real $(p,q)$-plane.
	Its six-region decomposition identifies the distinct polynomial,
	logarithmic, and exponential growth scales. We retain that decomposition
	and compare the corresponding volume assumptions in Table~\ref{tab:comparison}.
	The capacity criterion in the high-gradient region was already noted
	in \cite{SXX2022}; it is included here for completeness.
	For a pure gradient source, Goffi \cite[Theorem 1.2]{Goffi2023}
	proves a curvature-free criterion involving the areas of geodesic spheres,
	for general divergence-form operators and without a one-sided bound on
	the solution. This overlaps the $p=0$ specialization of our integral
	criterion and should be kept distinct from the mixed reaction problem.
	The case $q=0$ of our first theorem also agrees with the elliptic
	integral criterion in  Gu--Hao--Huang--Sun \cite{GHHS2026}  using flow decomposition and Qin--Sun \cite{QS2026} using test function, where a larger Sobolev
	supersolution class and the parabolic Cauchy problem are studied.
	
	\subsection{Recent developments and the position of the present results}
	Recent work also studies the product reaction by methods that retain
	more differential information about the solution. Ma--Wu \cite{MW2026}
	obtain improved Euclidean Liouville ranges for the equation
	$-\Delta u=u^p|\nabla u|^q$ by differential identities.
	He--Hu--Wang \cite{HHW2026} use Nash--Moser iteration for logarithmic
	gradient estimates on manifolds with a Ricci lower bound.
	Lu--Zhu \cite{LZ2026} investigate general product gradient
	structures for quasilinear equations in Euclidean domains.
	These equation results, and their gradient estimates, address questions
	different from a curvature-free volume criterion for weak inequalities;
	their exponent ranges need not coincide with the supersolution ranges here.
	
	Discrete analogues provide a further comparison.
	Hao--Sun \cite{HS2023} study gradient reactions on weighted graphs,
	and Duong--Liu--Minh--Quyet--Sun \cite{DLMQS2027} extend this direction to the discrete $m$-Laplacian.
	The discrete chain rule requires additional care, so our continuous
	argument does not automatically prove the corresponding graph statements.
	These comparisons locate the present results within both the earlier
	volume-growth theory and developments available by October 2026.
	
	\subsection{What is new beyond the 2022 classification}
	The starting point of this paper is the complete six-region parameter
	classification of Sun--Xiao--Xu \cite{SXX2022}, not a reclassification
	of the $(p,q)$-plane.  The distinction is between a \emph{growth envelope}
	of the form $R^D(\log R)^B$ and a \emph{scale-integral criterion}
	that may hold with no pointwise polynomial--logarithmic upper bound.
	In $G_1$, the genuinely mixed case $p>0$ is addressed by a two-stage
	negative-power and ordinary-cutoff test, followed by a fixed-density
	annular recurrence.  The $p=0$ case reduces to a pure gradient
	inequality, and the $q=0$ specialization is also covered by
	 Gu--Hao--Huang--Sun \cite{GHHS2026} and Qin--Sun \cite{QS2026}; neither specialization is advertised here as
	an independent discovery.
	
	In $G_3$ the criterion can alternatively be obtained from Goffi's
	pure-gradient theory \cite{Goffi2023} after a concave transformation.
	In $G_2$ the capacity criterion was already available in
	\cite{SXX2022}.  We include these regions to make the mechanism and the
	iterated-logarithmic scale uniform, not to claim priority for their
	underlying Liouville theorems.  In $G_4$ and $G_6$ the assertion is the
	zero \emph{lower} $R^{m-q}$-growth condition; in $G_5$ it is the exact
	\emph{strict} entropy threshold, whose numerical value was already
	visible in the earlier model construction.  These are refinements of
	the volume hypotheses rather than a new parameter-range theorem.
	
	A further limitation is essential: the counterexamples constructed in
	this work prove sharpness of the stated growth scales \emph{among
		smooth model manifolds}. They do not turn the divergence criterion into
	a necessary and sufficient condition on each fixed geometry.
	
	\subsection{Contributions, generality, and sharpness}
	The principal analytic contribution is a fixed-density annular estimate for the mixed-gradient regime.
	For $p\ge0$, $p+q>m-1$, $q<m$, it yields
	\begin{equation}\label{eq:introintegral}
		\int^\infty
		\frac{R^{(mp+q)/(m-1)-1}}{V(R)^{(p+q-m+1)/(m-1)}}\dd R=\infty
		\quad\Longrightarrow\quad\text{the Liouville property}.
	\end{equation}
	For $p<0$ and $m-1<q<m$, a different fixed weight gives a criterion
	independent of $p$. Each criterion permits any fixed finite number of
	critical iterated logarithms. The same density is used at every radius:
	a negative-power test controls the energy, an ordinary test removes the
	auxiliary solution power, and a mass recurrence is summed across scales.
	This avoids a limiting argument for tails with radius-dependent weights.
	
	A separate contribution concerns faster growth.
	When $p+q<m-1$ and $q\le m-1$, keeping the full parameter dependence
	in Young's inequality gives the sufficient condition
	\[
	\liminf_{R\to\infty}\frac{\log V(R)}{R^{m-q}}=0.
	\]
	On the line $p+q=m-1$, $p\ge0$, a logarithmic transformation instead
	gives the exact strict entropy threshold stated in Theorem~\ref{thm:G5}.
	The latter value agrees with the model constant in
	\cite{SXX2022}; our nonexistence argument reaches that
	strict value and the endpoint is attained by examples.
	
	The geometric generality and the sharpness assertions have precise meanings.
	Our test-function proofs use first derivatives of compactly supported
	tests only. They therefore extend to arbitrary smooth positive densities,
	with weighted volume replacing Riemannian volume, without a
	Bakry--\'Emery curvature assumption; see Theorem~\ref{thm:weighted}.
	Theorem~\ref{thm:logsharp} constructs smooth complete models immediately
	beyond every finite iterated-logarithm threshold in the three integral
	regions. Hence these logarithmic exponents are sharp within those model
	families. This does not assert that convergence of the integral ensures
	existence on every prescribed manifold.
	In the subhomogeneous region the separating power $m-q$ is sharp from
	the two sides, but general critical growth $\exp(CR^{m-q})$ remains
	outside the conclusions proved here.
	
 Our paper is organized as follows:	Section~\ref{sec:results} states the results and compares their hypotheses.
	Sections~\ref{sec:prelim}--\ref{sec:entropy} give the testing estimates
	and nonexistence proofs. Section~\ref{sec:models} proves the sharpness
	statements by global models. Section~\ref{sec:weighted} establishes
	the weighted extension.
	
	\section{Main results}\label{sec:results}
	\subsection{Geometric setting, weak solutions, and parameter regions}
	Throughout, $M$ is a smooth, connected, geodesically complete,
	noncompact Riemannian manifold without boundary. Its dimension is
	denoted by $n$ and its volume measure by $\mu$; the model constructions
	use $n\ge2$. No curvature assumption is imposed on any manifold in a nonexistence theorem.
	Particular geometries are used only to construct examples.
	Let $\operatorname{Lip}_c(M)$ denote the space of compactly supported
	Lipschitz functions.
	
	\begin{definition}\label{def:solution}
		Let $m>1$ and $p,q\in\R$. A function $u$ is a positive admissible weak
		solution of \eqref{eq:main} if $u\in C^1(M)$, $u>0$,
		$f=u^p|\nabla u|^q\in L^1_{\loc}(M)$, and
		\begin{equation}\label{eq:weak}
			\int_M|\nabla u|^{m-2}\ip{\nabla u}{\nabla\psi}\dd\mu
			\ge \int_M f\psi\dd\mu
		\end{equation}
		for every nonnegative $\psi\in\operatorname{Lip}_c(M)$.
		When $q=0$, we use the convention $|\nabla u|^0=1$.
		When $q<0$, the negative power at critical points is understood in the
		improper-integral sense and is not redefined to be zero.
		In particular, local integrability requires $|\nabla u|>0$ almost
		everywhere (because $u$ is locally bounded above and away from zero).
	\end{definition}
	
	All proofs are first established in the positive $C^1$ class above.
	Its local positivity makes every negative-power test admissible.
	The Sobolev formulations in earlier work should be distinguished from
	this class: using a larger collection of test functions does not by
	itself imply $C^1$ regularity of the solution.
	The extension in Proposition~\ref{prop:sobolev} applies to positive
	Sobolev supersolutions with two-sided local essential bounds.
	No assertion for arbitrary singular or unbounded Sobolev solutions,
	or for solutions that vanish, is made here.
	
	\begin{definition}
		In this paper, the Liouville property means that every positive
		admissible weak solution is constant if $q>0$, and that no positive
		admissible weak solution exists if $q\le0$.
	\end{definition}
	
	We use the following six disjoint regions:
	\begin{equation}\label{eq:regions}
		\begin{aligned}
			G_1&=\{(p,q):p\ge0,\ m-1-p<q<m\},\\
			G_2&=\{(p,q):q\ge m\},\\
			G_3&=\{(p,q):p<0,\ m-1<q<m\},\\
			G_4&=\{(p,q):p<0,\ q=m-1\},\\
			G_5&=\{(p,q):p=m-1-q,\ q\le m-1\},\\
			G_6&=\{(p,q):p<m-1-q,\ q<m-1\}.
		\end{aligned}
	\end{equation}
	They cover the entire $(p,q)$-plane.

	Figure~\ref{fig:pq-regions} depicts this partition in the $(p,q)$-plane.
	The drawing uses the shifted vertical coordinate $q-(m-1)$,
	so it applies without choosing a numerical value of $m>1$.
	Solid boundary rays are assigned to $G_2$, $G_4$, or $G_5$ as
	specified in \eqref{eq:regions}; the other regions are open
	along the separating rays. In particular, the sloping boundary
	$G_5$ is restricted to $p\ge0$ and is not extended into $p<0$.
	
	\begin{figure}[htbp]
		\centering
		\begin{tikzpicture}[x=0.98cm,y=0.98cm,>=stealth,
			every node/.style={font=\small}]
			% Plot coordinates: X=p+3 and Y=q-(m-1)+2.5.
			% The window represents -3<=p<=5, -2.5<=q-(m-1)<=2.5.
			\begin{scope}
				\clip (0,0) rectangle (8,5);
				\fill[blue!9] (0,3.5) rectangle (8,5); % G2
				\fill[orange!16] (0,2.5) rectangle (3,3.5); % G3
				\fill[green!11] (0,0) rectangle (8,2.5); % G6 baseline
				\fill[red!13] (3,2.5)--(3,3.5)--(8,3.5)--(8,0)--(5.5,0)--cycle; % G1
				% Boundaries of the classification
				\draw[very thick,blue!65!black] (0,3.5)--(8,3.5);
				\draw[very thick,orange!80!black] (0,2.5)--(3,2.5);
				\draw[very thick,purple!70!black] (3,2.5)--(5.5,0);
			\end{scope}
			% Coordinate axes: solid black, with visually offset labels
			\draw[->,thick] (0,0)--(8.45,0) node[right] {$p$};
			\draw[->,thick] (3,0)--(3,5.35) node[above] {$q$};
			\draw (2.92,2.5)--(3.08,2.5);
			\node[above left] at (3,2.5) {$m-1$};
			\node[left] at (3,3.5) {$m$};
			\node[below right] at (3,0) {$0$};
			\node[align=center,blue!65!black] at (5.5,4.25) {$G_2$};
			\node[align=center,orange!80!black] at (1.45,3.05) {$G_3$};
			\node[align=center,red!75!black] at (5.5,3.05) {$G_1$};
			\node[align=center,teal!65!black] at (1.35,1.45) {$G_6$};
			\node[align=center,teal!65!black] at (4.1,.75) {$G_6$};
			\node[text=purple!75!black,fill=white,inner sep=1pt] at (4.15,1.8) {$G_5$};
			\node[above,orange!85!black,fill=white,inner sep=1pt] at (1.25,2.5) {$G_4$};
			\node[font=\footnotesize,above] at (6.75,.22) {$p+q=m-1$};
			\draw[->,thin,purple!70!black] (6.05,.33)--(5.2,.36);
			\node[font=\footnotesize,above] at (7.0,3.5) {$q=m$};
		\end{tikzpicture}
		\caption{The six parameter regions in the $(p,q)$-plane, with the
			vertical axis shown at $p=0$. The horizontal separating level is
			$q=m-1$, and the descending ray represents $p+q=m-1$ for $p\ge0$.
			The region $G_2$ includes $q=m$, $G_4$ is the left horizontal ray,
			and $G_5$ is the right descending ray. Colors serve only to
			identify the regions; no geometric information is encoded by them.}
		\label{fig:pq-regions}
	\end{figure}
	Every unspecified lower limit in an integral below is a sufficiently
	large positive number; changing such a finite lower limit does not
	affect divergence. Set
	\begin{equation}\label{eq:DB}
		\begin{array}{c|cc}
			i&D_i&B_i\\ \hline
			1&\displaystyle\frac{mp+q}{p+q-m+1}
			&\displaystyle\frac{m-1}{p+q-m+1}\\[6pt]
			2&m&m-1\\[3pt]
			3&\displaystyle\frac q{q-m+1}
			&\displaystyle\frac{m-1}{q-m+1}.
		\end{array}
	\end{equation}
	These quantities are positive in their respective regions. The index $i=2$ is a capacity-theoretic benchmark, not a new result; the $i=3$ criterion is also reducible to a known pure-gradient theorem by a concave transformation.
	
	\subsection{Integral criteria and iterated logarithms}
	\begin{theorem}[Integral criterion in the superhomogeneous region]\label{thm:G1}
		Let $(p,q)\in G_1$. If
		\begin{equation}\label{eq:G1int}
			\int^\infty
			\frac{R^{(mp+q)/(m-1)-1}}
			{V(R)^{(p+q-m+1)/(m-1)}}\dd R=\infty,
		\end{equation}
		then \eqref{eq:main} has the Liouville property.
		More precisely, every positive admissible weak solution would satisfy
		$u^p|\nabla u|^q=0$ almost everywhere.
	\end{theorem}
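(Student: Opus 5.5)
Fix a positive admissible weak solution $u$ and write $f=u^p|\nabla u|^q$. The aim is a recurrence for the annular masses of one fixed density $u^{-\gamma}f$, with $\gamma>0$ small and chosen once and for all, and then to show that this recurrence forces $\int_M u^{-\gamma}f\dd\mu=0$ whenever \eqref{eq:G1int} holds.

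\textbf{Step 1: negative-power test.} Test \eqref{eq:weak} with $\psi=u^{-\gamma}\phi^s$, where $\phi$ is a Lipschitz cutoff equal to $1$ on $B_R$, supported in $B_{2R}$, with $|\nabla\phi|\le 1/R$, and $s$ is large. This is admissible because $u$ is $C^1$ and locally bounded away from $0$. The gradient of $u^{-\gamma}$ produces a good term with the right sign, and we obtain
\[
\gamma\int u^{-\gamma-1}|\nabla u|^m\phi^s+\int u^{-\gamma}f\phi^s\le s\int u^{-\gamma}|\nabla u|^{m-1}\phi^{s-1}|\nabla\phi|.
\]
Young's inequality splits the right side into a small multiple of the energy term plus $C\int u^{m-1-\gamma}|\nabla\phi|^m\phi^{s-m}$. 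This still contains a positive power of $u$, which we cannot control without an upper bound on $u$.

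\textbf{Step 2: remove the solution power.} Estimate the right side of Step 1 differently, by Hölder's inequality against the reaction. Write $|\nabla u|^{m-1}u^{-\gamma}$ as a product of a power of $u^{-\gamma-1}|\nabla u|^m$ and a power of $u^{-\gamma}u^p|\nabla u|^q$, times a leftover power of $u$. Since $p+q>m-1$ and $q<m$, the exponents can be chosen so that the leftover power of $u$ is exactly $0$; this fixes the convex weights. Young's inequality with three factors then absorbs both $u$-dependent pieces into the left side. Only a term $C\int_{B_{2R}\setminus B_R}|\nabla\phi|^{\theta}\dd\mu$ is left, with $\theta$ computed from $m,p,q$.

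An ordinary-cutoff test with $\psi=\phi^s$ plays a complementary role: it handles $p>0$ by trading the $u^p$ factor.

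Rather than bounding the whole cutoff region, keep the annular version. With $I(R)=\int_{B_R}u^{-\gamma}f$, write $I'$ for its increase over an annulus, and $J$ for the same quantity with the energy density $u^{-\gamma-1}|\nabla u|^m$. The aim is an inequality of the form
\[
I(R)\le C\bigl(I(2R)-I(R)\bigr)^{a}\,\bigl(V(2R)R^{-b}\bigr)^{1-a},\qquad 0<a<1.
\]
The exponent $a$ should equal $(m-1)/(p+q)$-type ratios, chosen so that $1-a$ and $b$ reproduce the integrand of \eqref{eq:G1int}.

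\textbf{Step 3: sum across scales.} Rewrite the recurrence as $(I(2R)-I(R))\ge c\,I(R)^{1/a}\,(V(2R)R^{-b})^{-(1-a)/a}$. If $I(R_0)>0$, then dividing by $I^{1/a}$ and summing over dyadic $R_k=2^kR_0$ gives
\[
\sum_k \frac{R_k^{b(1-a)/a}}{V(R_k)^{(1-a)/a}}\le C\,I(R_0)^{1-1/a}<\infty.
\]
This uses $\int_{I(R_k)}^{I(R_{k+1})} t^{-1/a}\dd t$ and the fact that $1/a>1$. By monotonicity of $V$, the series is comparable to the integral in \eqref{eq:G1int}, with exponents $(p+q-m+1)/(m-1)$ and $(mp+q)/(m-1)-1$. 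Divergence of the integral therefore contradicts $I(R_0)>0$. Hence $u^{-\gamma}f=0$ a.e., so $f=0$ a.e. For $q>0$ this gives $\nabla u=0$ a.e., so $u$ is constant. For $q\le0$ it is impossible outright, since $f>0$ a.e. by Definition~\ref{def:solution}.

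\textbf{Main obstacle.} The hard part is Step 2: getting an annular inequality in which the solution power cancels exactly. A single Hölder step in which the $u$-power cancels is exactly what makes the integrand's exponents come out right. The exponent bookkeeping needs $\gamma$ strictly inside an interval, nonempty precisely in $G_1$, so that all Young exponents exceed $1$.

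The second delicate point is the annulus versus the ball. If the energy term on the annulus is bounded by the reaction mass on a larger annulus, the recurrence links $I$ at three radii. I would handle this with the fixed $\gamma$ and cutoffs $\phi$ supported on $[R,2R]$, which gives a two-scale relation, and shift indices in the dyadic sum.
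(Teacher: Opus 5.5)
There is a genuine gap, and it lies in your choice of the fixed density $u^{-\gamma}f$. Carry out your Step~2 exactly, with $z=|\nabla u|$ and $\delta=p+q-m+1$. Cancelling the $u$-power in $u^{-\gamma}z^{m-1}=(u^{-\gamma-1}z^m)^{\alpha}(u^{-\gamma}f)^{\beta}$ forces
\[
\alpha=\frac{(m-1)p+\gamma(q-m+1)}{K_\gamma},\qquad
\beta=\frac{m-1-\gamma}{K_\gamma},\qquad
1-\alpha-\beta=\frac{\delta}{K_\gamma},\qquad
K_\gamma=mp+q-\gamma(m-q).
\]
Eliminating the energy then gives
\[
\int u^{-\gamma}f\phi^s\le C R^{-K_\gamma/(p+q-\gamma)}\mu(\A)^{\delta/(p+q-\gamma)}
\Bigl(\int_{\A}u^{-\gamma}f\phi^s\Bigr)^{(m-1-\gamma)/(p+q-\gamma)} .
\]
The summation therefore yields only $\int^\infty R^{K_\gamma/(m-1-\gamma)-1}V(R)^{-\delta/(m-1-\gamma)}\dd R=\infty$. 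This has volume power $K_\gamma/\delta=D_1-\gamma(m-q)/\delta<D_1$ and logarithmic exponent $(m-1-\gamma)/\delta<B_1$. This integral already converges for $V\asymp R^{D_1}$, so \eqref{eq:G1int} is not reached.

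The ordinary test cannot repair this while both sides of your recurrence involve $u^{-\gamma}f$. The map $g\mapsto\rho^2g$, $u\mapsto\rho^{-(m-q)/\delta}u$ sends solutions to solutions. Any annular estimate for $u^{-\gamma}f$ with solution-independent constants must be balanced under this map. If its volume power exceeded $K_\gamma/\delta$, letting $\rho\to\infty$ would give $u^{-\gamma}f\equiv0$ for every solution on every manifold, contradicting Theorem~\ref{thm:logsharp}. Letting $\gamma\downarrow0$ does not help either. The constant contains $\gamma^{-\alpha/(1-\alpha)}$, which blows up when $p>0$, and a radius-dependent $\gamma$ destroys the fixed-density recurrence.

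The missing idea is to keep $f=u^p|\nabla u|^q$ itself as the fixed density, and to use the negative-power test only to control the energy. The paper tests with $u^{-a}\phi^b$, discards the source term, and factors $u^{-a}z^{m-1}=(u^{-a-1}z^m)^\alpha(u^pz^q)^\beta$, with $f$ rather than $u^{-a}f$. This bounds $E=\int u^{-a-1}z^m\phi^b$ by a power of $J=\int_{\A}f\phi^b$. The ordinary test, via $z^{m-1}=(u^{-a-1}z^m)^\theta f^\eta$, then bounds $I=\int f\phi^b$ by $E^\theta J^\eta$. Substituting removes $a$ from every density and gives \eqref{eq:G1local}, whose exponents are exactly those of \eqref{eq:G1int}. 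For $p=0$ the ordinary test alone suffices.

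Your summation step is also incomplete as written. Set $g_k=V(2R_k)R_k^{-b}$ and $\tau=(1-a)/a$. From $I_{k+1}-I_k\ge c\,I_k^{1/a}g_k^{-\tau}$ alone, the comparison with $\int_{I_k}^{I_{k+1}}t^{-1/a}\dd t$ goes the wrong way when $I_{k+1}\gg I_k$. You need the a priori bound $I_R\lesssim g_R$. It follows from $J_R\le I_R$, but only if you keep the cutoff integral $\int F\phi_R^b$ on the left rather than the ball mass. This bound is what makes $z_j\le1$ in Lemma~\ref{lem:annular}.
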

	
	\begin{theorem}[Integral criterion for a negative solution exponent]\label{thm:G3}
		Let $(p,q)\in G_3$. If
		\begin{equation}\label{eq:G3int}
			\int^\infty
			\frac{R^{q/(m-1)-1}}{V(R)^{(q-m+1)/(m-1)}}\dd R=\infty,
		\end{equation}
		then every positive admissible weak solution is constant.
		This condition is independent of the value of $p$.
	\end{theorem}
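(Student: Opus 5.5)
The plan is to reduce Theorem~\ref{thm:G3} to the case $p=0$ of Theorem~\ref{thm:G1} by a concave change of the unknown. This is the "concave transformation" mentioned after \eqref{eq:DB}. For $(p,q)\in G_3$ set
\[
\alpha=\frac{p}{q-m+1}<0,\qquad G'(s)=s^{\alpha}\ (s>0),
\]
so that $G(s)=s^{\alpha+1}/(\alpha+1)$, or $G(s)=\log s$ when $\alpha=-1$. Then $G$ is increasing and concave, and $v=G(u)\in C^1(M)$ with $|\nabla v|=u^{\alpha}|\nabla u|$. Formally,
\[
-\Delta_m v=u^{\alpha(m-1)}(-\Delta_m u)-(m-1)\alpha\,u^{\alpha(m-1)-1}|\nabla u|^m\ \ge\ u^{\alpha(m-1)+p}|\nabla u|^q=|\nabla v|^q .
\]
Here we drop the nonnegative term coming from $\alpha<0$. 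The last equality holds exactly because $\alpha(m-1)+p=\alpha q$, which is the defining relation for $\alpha$.

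The first step is to make this computation rigorous in the weak sense of Definition~\ref{def:solution}. Given a nonnegative $\psi\in\operatorname{Lip}_c(M)$, test \eqref{eq:weak} with $\psi\,u^{\alpha(m-1)}$. This function is admissible because $u$ is $C^1$ and locally bounded above and away from zero on $\supp\psi$. Expanding its gradient gives
\[
\int|\nabla v|^{m-2}\ip{\nabla v}{\nabla\psi}\dd\mu\ \ge\ \int u^{\alpha(m-1)+p}|\nabla u|^q\psi\dd\mu-\alpha(m-1)\int u^{\alpha(m-1)-1}|\nabla u|^m\psi\dd\mu ,
\]
and the last term is nonnegative. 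Local integrability of $|\nabla v|^q=u^{\alpha q}|\nabla u|^q$ follows from that of $u^p|\nabla u|^q$, since $u^{\alpha q-p}$ is locally bounded. For $q>0$ no critical-point issue arises. So $v$ is a $C^1$ weak solution of $-\Delta_m v\ge|\nabla v|^q$.

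The second step is to apply Theorem~\ref{thm:G1} with $p=0$. The pair $(0,q)$ lies in $G_1$ because $m-1<q<m$. In that case the exponents in \eqref{eq:G1int} become $q/(m-1)-1$ and $(q-m+1)/(m-1)$, which is exactly \eqref{eq:G3int}. This already explains why the condition does not depend on $p$. The conclusion is $|\nabla v|^q=0$ a.e., hence $\nabla v\equiv0$ by continuity, so $v$ is constant. Since $G$ is strictly monotone, $u$ is constant as well.

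The expected obstacle is positivity. When $\alpha+1<0$, $v$ is negative, and it may be unbounded below, so it is not literally a positive admissible solution and a global constant shift need not repair this. I would handle it by checking that, at $p=0$, the proof of Theorem~\ref{thm:G1} uses $v$ only through $\nabla v$ and through the weak inequality. Its two-stage test and fixed-density recurrence should then run with the gradient-only inequality, without any sign condition on $v$. If the $p=0$ argument does use a power of $v$ in its negative-power test, I would instead pass to $w=e^{v}$ or a similar positive increasing function of $v$, and repeat the transformation check for $w$. Failing that, I would invoke Goffi's pure-gradient criterion \cite[Theorem 1.2]{Goffi2023}, which requires no one-sided bound on the solution.
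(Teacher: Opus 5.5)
Your reduction is correct, and once unwound it is the paper's own proof. Since $\alpha(m-1)=-a_0$ with $a_0=-p(m-1)/(q-m+1)$, running the $p=0$ case of Theorem~\ref{thm:G1} on $v$ with the test $\phi_R^b$ is the same as testing $u$ with $u^{-a_0}\phi_R^b$: your density $|\nabla v|^q=u^{pq/(q-m+1)}|\nabla u|^q$ is exactly the paper's fixed density $F$, the term you discard is its energy term $a_0\int u^{-a_0-1}|\nabla u|^m\phi^b$, and you arrive at \eqref{eq:G3local} followed by Lemma~\ref{lem:annular}. Your positivity worry is settled by your first option, because the $p=0$ step (Step 4 of the proof of Theorem~\ref{thm:G1}) uses only the ordinary test $\phi^b$, with no power of the solution, and concludes directly from $|\nabla v|^q=0$ a.e.; the fallback $w=e^{v}$ would not have worked anyway, since that convex change reintroduces $-(m-1)e^{(m-1)v}|\nabla v|^m$ with the wrong sign.
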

	The conclusion can also be recovered from Goffi~\cite{Goffi2023} after a
	concave change of variable; see Remark~\ref{rem:Goffi}. The proof below
	records its place in the same fixed-density method as Theorem~\ref{thm:G1}.
	
	\begin{proposition}[Capacity criterion]\label{prop:G2}
		If
		\begin{equation}\label{eq:G2int}
			\int^\infty\left(\frac R{V(R)}\right)^{1/(m-1)}\dd R=\infty,
		\end{equation}
		then every positive $C^1$ weakly $m$-superharmonic function is constant.
		In particular, \eqref{eq:main} has the Liouville property for
		$(p,q)\in G_2$.
	\end{proposition}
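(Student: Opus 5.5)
We argue by contradiction through the standard capacity argument for positive $m$-superharmonic functions, using only first-order tests. Let $u\in C^1(M)$ be positive and weakly $m$-superharmonic, so that \eqref{eq:weak} holds with $f=0$. The approach is to show that $\int_M|\nabla u|^m u^{-m}\,\psi^m\dd\mu$ is small for suitable cutoffs $\psi$, and hence that $\nabla u=0$.

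\emph{Step 1: a Caccioppoli estimate for $\log u$.} We test \eqref{eq:weak} with $\varphi=u^{1-m}\psi^m$, where $\psi\in\operatorname{Lip}_c(M)$ and $0\le\psi\le1$. This test is admissible because $u$ is $C^1$ and locally bounded away from zero, so $\varphi$ is Lipschitz with compact support. Since $\nabla\varphi=(1-m)u^{-m}\psi^m\nabla u+m u^{1-m}\psi^{m-1}\nabla\psi$, we obtain
\[
(m-1)\int_M\frac{|\nabla u|^m}{u^m}\psi^m\dd\mu\le m\int_M\frac{|\nabla u|^{m-1}}{u^{m-1}}\psi^{m-1}|\nabla\psi|\dd\mu .
\]
Writing $w=\log u$, Hölder's inequality applied on $\supp|\nabla\psi|$ gives
\[
\int_M|\nabla w|^m\psi^m\le C\Big(\int_{\supp\nabla\psi}|\nabla w|^m\psi^m\Big)^{(m-1)/m}\Big(\int_M|\nabla\psi|^m\Big)^{1/m}.
\]
Keeping the annulus restriction in the first factor is essential for the final step.

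\emph{Step 2: the cutoff and the capacity bound.} For $R_0<R$ we choose the radial cutoff $\psi=\eta(r)$ with $\eta=1$ on $[0,R_0]$, $\eta=0$ on $[R,\infty)$, and $\eta'(r)=-h(r)/\int_{R_0}^Rh$ on $(R_0,R)$, where $h(r)=(r/V(r))^{1/(m-1)}$. The coarea formula cannot be used without regularity of the spheres. We therefore work dyadically instead, or equivalently use the Lipschitz cutoff $\eta$ with a piecewise-constant slope on annuli $B_{2^{k+1}}\setminus B_{2^k}$. On each annulus the derivative is constant, which gives
\[
\int|\nabla\psi|^m\le\Big(\sum_k2^k(2^k/V(2^k))^{1/(m-1)}\Big)^{1-m}.
\]
The standard choice, slope proportional to $(2^k/V(2^{k+1}))^{1/(m-1)}$, optimizes $\sum_k(\text{slope})^m V(2^{k+1})$ subject to $\sum_k(\text{slope})\,2^k=1$. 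By monotonicity of $V$, divergence of \eqref{eq:G2int} is equivalent to divergence of the dyadic sum $\sum_k\big(2^{km}/V(2^k)\big)^{1/(m-1)}$, up to shifting the index by one. Hence $\operatorname{cap}_m(B_{R_0},B_R)\to0$ as $R\to\infty$ for every fixed $R_0$.

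\emph{Step 3: conclusion.} From Step 1 we first get the crude bound $\int|\nabla w|^m\psi^m\le C\int|\nabla\psi|^m$, which is bounded; hence $\int_M|\nabla w|^m<\infty$, by monotone convergence as $R\to\infty$. Then the annulus factor $\int_{\supp\nabla\psi}|\nabla w|^m\psi^m$ is a tail of a convergent integral once $R_0\to\infty$. This is automatic here because $\supp\nabla\psi\subset M\setminus B_{R_0}$. Letting $R\to\infty$ and then $R_0\to\infty$, we get $\int_{B_{R_0}}|\nabla w|^m\to0$ in the limit. Hence $\nabla u\equiv0$, so $u$ is constant on the connected manifold $M$.

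For the consequence concerning $G_2$: any positive admissible solution of \eqref{eq:main} satisfies \eqref{eq:weak} with $f\ge0$, so it is weakly $m$-superharmonic and hence constant. When $q\ge m>0$, constants are the expected solutions, which is exactly the Liouville property as defined.

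The main obstacle is Step 2. We must pass from the integral condition \eqref{eq:G2int} to capacity decay without any regularity of $V$ and without the coarea formula on spheres. The dyadic discretization, together with monotonicity of $V$ to compare the integral with the discrete sum, is what we would try first.
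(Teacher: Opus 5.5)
Your proposal is correct and follows essentially the paper's proof: the same test $u^{1-m}\psi^m$ giving the energy inequality for $w=\log u$, the same dyadic piecewise-linear cutoff with slopes proportional to $(R_j/V(2R_j))^{1/(m-1)}$ yielding $\int_M|\nabla\psi|^m\le S_N^{1-m}\to0$, and the same monotonicity comparison between the integral and the dyadic sum. The only real difference is that your annulus-restricted H\"older step and tail argument in Step~3 are unnecessary, so the remark that this restriction is ``essential'' is inaccurate: because the capacity tends to zero, the crude bound $\int_M|\nabla w|^m\psi^m\le (m/(m-1))^m\int_M|\nabla\psi|^m$ already gives $\int_{B_{R_0}}|\nabla w|^m=0$ directly (also, your displayed energy bound should carry $V(2^{k+1})$ rather than $V(2^k)$, as in your ``standard choice'' sentence).
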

	The volume condition in Proposition \ref{prop:G2} was already given
	in \cite{SXX2022} and is not claimed as an improvement
	over that work. We include its proof in Section~\ref{sec:integral} to make explicit
	that the entire approach uses only first derivative estimates for
	cutoffs.
	
	The preceding three conditions can be written in the unified form
	\begin{equation}\label{eq:unified}
		\mathcal I_i(V):=
		\int^\infty \left(\frac{R^{D_i}}{V(R)}\right)^{1/B_i}
		\frac{\dd R}{R}=\infty,\qquad i=1,2,3.
	\end{equation}
	For instance, when $i=1$ we have $D_1/B_1=(mp+q)/(m-1)$ and
	$1/B_1=(p+q-m+1)/(m-1)$, which recovers \eqref{eq:G1int}.
	
	\begin{corollary}[Any finite number of iterated logarithms]\label{cor:logs}
		Let $L_1(R)=\log R$ and $L_{j+1}(R)=\log L_j(R)$.
		For $i=1,2,3$, fix any positive integer $N$. If, for all sufficiently
		large $R$,
		\begin{equation}\label{eq:logs}
			V(R)\le C R^{D_i}
			\prod_{j=1}^{N-1}L_j(R)^{B_i}\,L_N(R)^\beta,
			\qquad \beta\le B_i,
		\end{equation}
		then the Liouville property holds in the corresponding parameter
		region.
	\end{corollary}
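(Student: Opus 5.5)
The plan is to reduce Corollary~\ref{cor:logs} directly to the unified integral criterion \eqref{eq:unified}. Theorems~\ref{thm:G1} and \ref{thm:G3} and Proposition~\ref{prop:G2} state that $\mathcal I_i(V)=\infty$ implies the Liouville property in the corresponding region. So it suffices to show that the volume bound \eqref{eq:logs} forces $\mathcal I_i(V)=\infty$. The argument is purely one-dimensional. We use that $B_i>0$ (noted after \eqref{eq:DB}), so $t\mapsto t^{-1/B_i}$ is decreasing on $(0,\infty)$, and an upper bound on $V$ gives a lower bound on the integrand.

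First, \eqref{eq:logs} gives, for all $R\ge R_0$ with $R_0$ large enough that $L_N(R)\ge 1$ (take $R_0$ larger than an $N$-fold iterated exponential), the estimate
\[
\left(\frac{R^{D_i}}{V(R)}\right)^{1/B_i}\frac1R
\ \ge\ C^{-1/B_i}\,\frac{1}{R\,\prod_{j=1}^{N-1}L_j(R)\,L_N(R)^{\beta/B_i}} .
\]
Since $\beta\le B_i$ and $L_N(R)\ge1$, we have $L_N(R)^{\beta/B_i}\le L_N(R)$. Hence the right side is bounded below by $C^{-1/B_i}\bigl(R\prod_{j=1}^{N}L_j(R)\bigr)^{-1}$. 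This also covers the case $\beta\le 0$, including negative $\beta$.

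Second, we verify the classical divergence. For $R\ge R_0$,
\[
\frac{\dd}{\dd R}L_{N+1}(R)=\frac{1}{R\prod_{j=1}^{N}L_j(R)},
\]
which follows by induction on $N$ from the chain rule. Therefore $\int_{R_0}^{T}\bigl(R\prod_{j=1}^N L_j\bigr)^{-1}\dd R=L_{N+1}(T)-L_{N+1}(R_0)\to\infty$ as $T\to\infty$. It follows that $\mathcal I_i(V)=\infty$, and the unspecified lower limit is irrelevant, as stipulated before \eqref{eq:DB}. Invoking Theorem~\ref{thm:G1} for $i=1$, Proposition~\ref{prop:G2} for $i=2$, and Theorem~\ref{thm:G3} for $i=3$ completes the argument. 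For $i=2$ one should check that \eqref{eq:unified} with $D_2=m$, $B_2=m-1$ is exactly \eqref{eq:G2int}: $(R^m/V)^{1/(m-1)}R^{-1}=(R/V)^{1/(m-1)}$. The analogous check for $i=3$ is $(R^{q/(q-m+1)}/V)^{(q-m+1)/(m-1)}R^{-1}=R^{q/(m-1)-1}V^{-(q-m+1)/(m-1)}$, which matches \eqref{eq:G3int}.

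There is no real obstacle. The only points needing care are bookkeeping ones. Positivity of $B_i$ is what makes the inequality direction correct. The threshold $R_0$ must make all $L_j$ defined and $\ge1$ simultaneously, since $L_N\ge 1$ implies $L_j\ge1$ for all $j<N$.
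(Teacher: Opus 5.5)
Your proposal is correct and follows the paper's proof. Both arguments bound the integrand of $\mathcal I_i(V)$ from below using the volume bound, and then integrate via the derivative formula for iterated logarithms. The only cosmetic difference is that you absorb the case $\beta<B_i$ into the critical case via $L_N^{\beta/B_i}\le L_N$ and use $L_{N+1}$ as an antiderivative. The paper instead substitutes $t=L_N(R)$ and integrates $t^{-\beta/B_i}$ directly in both cases.
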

	
	For $N=1$, this recovers the previously known critical first-logarithm
	condition. For $N=2$, it permits the additional factor
	$(\log\log R)^{B_i}$; for $N=3$, it also permits
	$(\log\log\log R)^{B_i}$. In each case $N$ is fixed and finite.
	No infinite product or infinitely iterated cutoff construction is
	involved.

	\begin{theorem}[Sharpness at every finite logarithmic level]\label{thm:logsharp}
		Fix $(p,q)\in G_i$, where $i\in\{1,2,3\}$, and let $D_i,B_i$ be
		as in \eqref{eq:DB}. For every integer $N\ge1$, every $b>B_i$, and
		every dimension $n\ge2$, there exists a smooth complete model on
		$\mathbb R^n$ with
		\begin{equation}\label{eq:logsharpvolume}
			V(R)\asymp R^{D_i}
			\prod_{j=1}^{N-1}L_j(R)^{B_i}L_N(R)^b
		\end{equation}
		on which \eqref{eq:main} has a positive nonconstant admissible
		$C^1$ weak solution.
	\end{theorem}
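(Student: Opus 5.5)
We construct a rotationally symmetric model and a radial solution. On $\mathbb R^n$ we take the metric $dr^2+\sigma(r)^2 d\theta^2$, with $\sigma$ smooth, $\sigma(r)=r$ near $0$ (so the metric is smooth at the origin), and complete by construction. We prescribe the area function $S(r)=\omega_{n-1}\sigma(r)^{n-1}$ for large $r$ by
\[
S(r)\asymp r^{D_i-1}\prod_{j=1}^{N-1}L_j(r)^{B_i}L_N(r)^b .
\]
Integration then gives \eqref{eq:logsharpvolume}, because the log factors are slowly varying and $D_i>0$.

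For a radial function $u=u(r)$ with $u'<0$, the inequality reads
\[
-\frac1S\bigl(S|u'|^{m-2}u'\bigr)'\ge u^p|u'|^q .
\]
Set $w=S|u'|^{m-1}$, so that $-\Delta_m u=w'/S$ and the inequality becomes $w'\ge S\,u^p|u'|^q$. The plan is to choose $u$ with an explicit profile and check that $w$ is increasing fast enough.

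\emph{Case $i=2$.} Take $u=\int_r^\infty S^{-1/(m-1)}\,d\rho$, modified: its finiteness is exactly convergence of \eqref{eq:G2int}, which holds since $b>B_2=m-1$. Then $|u'|=S^{-1/(m-1)}\cdot(1+\varepsilon(r))$ with a small increasing correction, so that $w$ has positive derivative dominating $S|u'|^q u^p$. Here $|u'|^q\le CS^{-q/(m-1)}$ with $q\ge m$, which decays faster than $1/S$ times an integrable factor.

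\emph{Cases $i=1,3$.} We use the standard ansatz of a positive limit plus a decaying tail, $u=1+\phi$ in $G_3$ (or generally $u\to c>0$). Then $u^p\asymp1$ and the inequality reduces to $w'\gtrsim S|u'|^q$ with $w=S|u'|^{m-1}$. Writing $g=|u'|$, we need $(Sg^{m-1})'\gtrsim S g^q$. We try
\[
Sg^{m-1}=\Bigl(c_0-\kappa\int_r^\infty S^{-(q-m+1)/(m-1)}\,d\rho\Bigr)^{-(m-1)/(q-m+1)},
\]
which is the exact solution of the Riccati-type ODE $w'=\kappa' S^{1-q/(m-1)}w^{q/(m-1)}$. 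This $w$ is bounded, increasing, and well defined precisely when
\[
\int^\infty S^{-(q-m+1)/(m-1)}<\infty .
\]
With $S$ as above, the exponent becomes
\[
(D_3-1)\cdot\frac{q-m+1}{m-1}=\frac{q}{m-1}-\frac{q-m+1}{m-1}=1 ,
\]
and the log factor has power $b\cdot\frac{q-m+1}{m-1}>1$ at level $N$. So the integral converges exactly when $b>B_3$, matching the divergence of \eqref{eq:G3int}. We also need $u=c+\int_r^\infty g$ finite, i.e. $g\asymp S^{-1/(m-1)}$ integrable, and one checks this holds since $D_3>m$. For $i=1$ the same scheme works with $u$ decaying like a power, $u\asymp r^{-\alpha}$ times logs, instead of tending to a constant. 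Matching the homogeneity, $u^p|u'|^q$ is comparable to $|u'|^{m-1}/r\cdot(\ldots)$, and this yields the exponents $D_1,B_1$. We realize this by the substitution $v=u^{(p+q-m+1)/(q-m+1)}$-type transformation, which converts the mixed term into a pure gradient term up to bounded factors, reducing to the $i=3$ computation with $D_1,B_1$.

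Near $r=0$ we solve the ODE locally (e.g.\ take $u'=-\delta r$ on small $r$, where $w'\asymp r^{n-1}$ dominates the reaction for small $\delta$) and glue smoothly via a monotone $C^1$ interpolation. The glued function $u$ is $C^1$ with $u'<0$ for $r>0$. In $G_3$ we have $q>0$, so $|\nabla u|^q$ vanishing at the origin is harmless. The weak inequality then holds across the origin because the flux $w(r)\to0$ as $r\to0$.

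The main obstacle is the $i=1$ case. There we must verify that the iterated-log profile gives $w'\ge S u^p|u'|^q$ uniformly, and not only asymptotically, including the derivative of the slowly varying factors. We expect to handle this by choosing constants $\kappa$ small and the gluing radius large, so that lower-order terms from differentiating $L_j$ are absorbed.
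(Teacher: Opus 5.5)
\textbf{Main gap: the $G_1$ case.} Your treatment of $G_1$, the central case of the theorem, does not go through.

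\emph{The substitution fails for $q\le m-1$.} The map $v=u^{(p+q-m+1)/(q-m+1)}$ is undefined at $q=m-1$. For $q<m-1$ (which includes the Lane--Emden case $q=0$), its target pure-gradient inequality lies in $G_6$, whose thresholds are exponential, so there is no ``$i=3$ computation'' to reduce to.

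\emph{It also fails for $q>m-1$, $p>0$.} Producing $u$ from $v$ requires a nonconstant positive solution of $-\Delta_m v\ge c|\nabla v|^q$ on the same model. In your own terms, this needs $\int^\infty S^{-(q-m+1)/(m-1)}<\infty$. But for $S\asymp r^{D_1-1}\times(\text{logs})$ the power of $r$ in this integrand is $-(p+1)(q-m+1)/(p+q-m+1)>-1$, so the integral diverges. This is forced, not accidental: $D_1<D_3=q/(q-m+1)$, so Theorem~\ref{thm:G1} with $p=0$ (and Proposition~\ref{prop:coefficient}) makes any such $v$ constant. Likewise, if $u\ge L>0$ then $-\Delta_m u\ge L^p|\nabla u|^q$, which is Liouville at this volume scale. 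Hence every $G_1$ solution here has $\inf u=0$, and $u^p$ is never comparable to a constant, contrary to your ``up to bounded factors''.

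\emph{What is missing} is a direct profile with exact logarithmic exponents. Set $h=p+q-m+1$, $\beta=(m-q)/h$, $P_N=\prod_{j<N}L_j$, $T=L_N$, and $S=r^{D_1-1}P_N^{B_1}T^b$. Take
\[
u=\varepsilon r^{-\beta}P_N^{-1/h}T^{-a},\qquad \frac1h<a<\frac b{m-1},
\]
a window that is nonempty exactly because $b>B_1$. Since $D_1=m+(m-1)\beta$ and $B_1=(m-1)/h$, the powers of $r$ and of $L_1,\dots,L_{N-1}$ cancel in the flux: $Sf^{m-1}=\varepsilon^{m-1}\varrho^{m-1}T^{\kappa}$, where $f=-u'$, $\varrho=rf/u\to\beta$, and $\kappa=b-a(m-1)>0$. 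Meanwhile $Su^pf^q\sim\varepsilon^{p+q}\beta^qT^{b-a(p+q)}/(rP_N)$. Hence $(Sf^{m-1})'/(Su^pf^q)\sim\varepsilon^{-h}\beta^{m-1-q}\kappa\,T^{ah-1}\to\infty$.

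This is not a matter of absorbing lower-order terms with small constants. Look at the first exponent on $L_1,\dots,L_{N-1}$ that differs from $1/h$. If it is smaller, the ratio tends to zero like a negative power of that $L_j$. If it is larger, the flux eventually decreases.

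\textbf{The gluing also needs repair.}

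\emph{The pole profile.} Your choice $u'=-\delta r$ gives $-\Delta_m u/(u^p|\nabla u|^q)\asymp\delta^{m-1-q}r^{m-2-q}$. This tends to $0$ when $q<m-2$ (e.g.\ $m=3$, $q=0$, $p>2$, inside $G_1$). When $q\le -n$ (also possible in $G_1$), the source $|\nabla u|^q\asymp r^q$ is not locally integrable at the pole. You need $f=cr^{a_0}$ with $a_0>0$, $a_0(m-1-q)<1$, and $n+a_0q>0$.

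\emph{The transition interval.} More importantly, a monotone $C^1$ interpolation of $u$ does not by itself keep $(Sf^{m-1})'\ge Su^pf^q$ there. The device that closes this is Lemma~\ref{lem:glue}. Fix $u$ first, then define the area through its logarithmic derivative so that $S'/S\ge u^pf^{q-m+1}-(m-1)f'/f$, cf.\ \eqref{eq:areainequality}. This condition is invariant under $S\mapsto CS$, so the tail area changes only by a constant factor.

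\textbf{Two smaller points.}

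\emph{$G_3$.} Your closed form has the wrong sign: with $c_0-\kappa\int_r^\infty S^{-(q-m+1)/(m-1)}$ raised to the power $-(m-1)/(q-m+1)$, the flux is decreasing. You need $c_0+\kappa\int_r^\infty$ (or $c_0-\kappa\int_{r_1}^r$). With that fix, the exact Riccati solution is a legitimate alternative to the explicit profile.

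\emph{$G_2$.} You must keep a positive constant in $u$, because $p$ is arbitrary there. If $u\to0$ with $q=m$ and $p\le-1$, the reaction $Su^p|u'|^m$ along your profile is not integrable, so no small bounded correction exists.
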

	
	Corollary~\ref{cor:logs} and Theorem~\ref{thm:logsharp} determine the
	exact threshold of the last logarithmic exponent in each family
	\eqref{eq:logsharpvolume}: the Liouville property holds on every manifold satisfying the displayed
	upper bound for $b\le B_i$, while suitable model manifolds with
	that asymptotic volume order admit counterexamples for $b>B_i$.
	The topological dimension $n$ is arbitrary; it is the asymptotic
	volume exponent $D_i$ that controls this comparison.
	The examples do not prescribe the geometry of an arbitrary manifold
	whose integral happens to converge.
	
	\begin{remark}\label{rem:necessary}
		By contraposition, the corresponding integral must converge if a
		positive nonconstant solution exists. For $q\le0$, existence of any
		positive admissible solution also requires convergence.
		The model sharpness in Theorem~\ref{thm:logsharp} does not prove
		that convergence implies existence on an arbitrary given manifold. Thus the general integral criterion should not be
		described as an unconditional necessary and sufficient condition.
	\end{remark}
	
	\subsection{Zero lower volume entropy and subhomogeneous growth}
	For $s>0$, define the lower growth rate
	\begin{equation}\label{eq:entropy}
		h_s(M)=\liminf_{R\to\infty}\frac{\log V(R)}{R^s}.
	\end{equation}
	Since $V$ is nondecreasing and has a fixed positive lower bound for
	large radii, $h_s(M)\ge0$. The usual lower volume entropy is denoted
	by $h_1(M)$.
	
	\begin{theorem}[The subhomogeneous region]\label{thm:G46}
		Suppose $p+q<m-1$ and $q\le m-1$. If
		\begin{equation}\label{eq:G46}
			h_{m-q}(M)=0,
		\end{equation}
		then \eqref{eq:main} has the Liouville property. In particular:
		\begin{enumerate}
			\item for $(p,q)\in G_4$, the sufficient condition is $h_1(M)=0$;
			\item for $(p,q)\in G_6$, the sufficient condition is
			$\liminf\limits_{R\to\infty}R^{q-m}\log V(R)=0$.
		\end{enumerate}
	\end{theorem}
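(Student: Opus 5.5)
We test the inequality against $\psi=u^{-\alpha}\varphi^s$, where $\alpha>0$ is a free parameter and $\varphi$ is a distance cutoff: $\varphi=1$ on $B_R$, $\varphi=0$ off $B_{2R}$, $|\nabla\varphi|\le 1/R$. This test is admissible because $u\in C^1$ is locally bounded away from zero. Plugging it into \eqref{eq:weak} gives
\[
\alpha\int |\nabla u|^m u^{-\alpha-1}\varphi^s
+\int u^{p-\alpha}|\nabla u|^q\varphi^s
\le s\int |\nabla u|^{m-1}u^{-\alpha}\varphi^{s-1}|\nabla\varphi| .
\]
The plan is to absorb the right-hand side into the two good terms on the left by Young's inequality, keeping track of every constant. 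This is where the growth scale $R^{m-q}$ should come from.

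The key step is a three-term Young inequality, which uses $q\le m-1<m$. Write $|\nabla u|^{m-1}$ as an interpolation between $|\nabla u|^m$ and $|\nabla u|^q$, and choose $\alpha$ so that the powers of $u$ also interpolate between $u^{-\alpha-1}$ and $u^{p-\alpha}$. Splitting the integrand as $(|\nabla u|^m u^{-\alpha-1})^{\theta}(u^{p-\alpha}|\nabla u|^q)^{\eta}\cdot(\text{remainder})$ with $\theta+\eta<1$ leaves a remainder that is a power of $|\nabla\varphi|/\varphi$ times a power of $u$. Matching the exponents of $|\nabla u|$ and of $u$ separately forces a specific remainder exponent. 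Because $p+q<m-1$ (strictly subhomogeneous), one should be able to choose $\theta,\eta$ so that the remainder carries \emph{no} power of $u$ at all, or a power that is handled in a second stage. The remainder is then bounded by a constant $C(\alpha,\epsilon)$ times $|\nabla\varphi|^{k}$. Balancing the homogeneity suggests that the effective exponent is attached to the cutoff scale $R^{-(m-q)}$.

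This alone yields only polynomial decay $\int_{B_R}u^{p-\alpha}|\nabla u|^q\le C R^{-k}V(2R)$, which is too weak for exponential volume. The fix is to use exponential cutoffs instead of linear ones. Take $\varphi=e^{-\lambda r^{m-q}}$, suitably truncated at a large radius, or equivalently iterate the estimate across dyadic or linear annuli with the constants optimized in $\lambda$. With this weight, $|\nabla\varphi|/\varphi\approx\lambda R^{m-q-1}$. After Young's inequality the error becomes $\lambda^{\,\cdot}$ times $\int \varphi\,\mathrm{d}\mu$, and the weighted volume $\int e^{-\lambda r^{m-q}}\,\mathrm{d}\mu$ is finite along a sequence $R_k\to\infty$ with controllable tails exactly when $\log V(R_k)\le \epsilon R_k^{m-q}$ with $\epsilon\ll\lambda$. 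Concretely, we would cut at $R_k$ with $\log V(R_k)\le \epsilon_k R_k^{m-q}$ and use $\varphi=(1-r/R_k)_+$ composed with an exponential profile. The goal is to show that the left-hand side over a fixed ball $B_{\rho}$ is bounded by a quantity tending to $0$ as $\epsilon\to0$, with $\lambda$ chosen as a fixed multiple of $\epsilon$.

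Letting $k\to\infty$ gives $\int_{B_\rho}|\nabla u|^m u^{-\alpha-1}=0$, so $\nabla u\equiv0$ and $u$ is constant. For $q\le0$ a positive constant is not admissible: when $q<0$ the improper power $|\nabla u|^q$ fails to be locally integrable, and when $q=0$ the inequality would give $0\ge u^p>0$. Either way we obtain nonexistence, which is the Liouville property; cases (1) and (2) are just the values $m-q=1$ and $m-q$ for $G_4$ and $G_6$.

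The main obstacle is the quantitative Young step in the exponential regime. With a linear cutoff the scale mismatch is harmless, but here every constant must scale exactly like $\lambda^{\cdot}R^{(m-q)\cdot}$, so that liminf-zero entropy, rather than merely small entropy, makes the error vanish. We would first try to arrange the remainder to contain no power of $u$; this should use both $p+q<m-1$ and $q\le m-1$. If a residual power of $u$ survives, we would run a second test with a different $\alpha$ to eliminate it, as the paper does in $G_1$.
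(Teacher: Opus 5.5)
\emph{Verdict: the proposal has a genuine gap.} Your first stage is sound, and it already contains what you need. Testing with $u^{-\alpha}\varphi^s$ and splitting the right side by a three-term Young inequality does leave a remainder free of $u$. Matching exponents forces the remainder power $T=(k\alpha-mp-q)/h$, with $h=m-1-p-q$ and $k=m-q$. So for fixed $\alpha$ you get $\int_{B_R}u^{p-\alpha}|\nabla u|^q\le C(\alpha)R^{-T}V(2R)$. The decay exponent is $T$ (not $k$), and it grows linearly in $\alpha$.

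The gap is the proposed fix. With $\alpha$ fixed and the weight $e^{-\lambda r^{k}}$, the weight's derivative is not absorbed exactly, as it is on the homogeneous line $p+q=m-1$ after the substitution $w=\log u$. Young's inequality leaves an \emph{interior} error of size $C(\alpha)\lambda^{T}\int_{B_{R_j}}r^{(k-1)T}e^{-\lambda r^{k}}\,\mathrm{d}\mu$, spread over the whole ball. Bounding it needs $V(r)$ for all $r\le R_j$, whereas $h_k(M)=0$ controls $V$ only along a sequence; between consecutive terms the volume may be enormous. Even for $V(R)=e^{R^\beta}$ with $\beta<k$, which the theorem covers, this error grows like $\exp\bigl(c\lambda^{-\beta/(k-\beta)}\bigr)$ as $\lambda\downarrow0$. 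That is exactly your regime $\lambda\propto\epsilon\to0$. For fixed $\lambda$ it converges to a fixed positive number, so it never becomes small. With $\alpha$ fixed you reach only polynomial volume growth, with or without the exponential profile.

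The missing idea is to make the negative exponent itself the large parameter and optimize your own estimate in $\alpha$. Because $\alpha\to\infty$, you first need a normalization absent from your plan. The left side over a fixed set can tend to zero as $\alpha$ grows wherever $u>1$. So rescale $v=u/K_0$ with $v\le1$ on a fixed compact set $E$ where $|\nabla u|$ is bounded below. Then $v^{p-\alpha}\ge v^{-1}$ on $E$ once $\alpha\ge p+1$, which gives a mass $c_E>0$ independent of $\alpha$ and $R$. The price is the coefficient $\ell_0=K_0^{-h}$ in the rescaled inequality.

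Next, keep the linear cutoff between $B_R$ and $B_{2R}$, so that the error lives only on the annulus and involves only $V(2R)$. Take the cutoff power $s\ge T$, hence proportional to $\alpha$, and track every $\alpha$-dependent constant. The paper obtains
$\int_{B_R}v^{p-\alpha}|\nabla v|^q\le C_0V(2R)\alpha^{-(p+q)/h}R^{(mp+q)/h}\bigl(C_1K_0\alpha^{1/h}R^{-k/h}\bigr)^{\alpha}$,
with $C_0,C_1$ depending only on $m,p,q,K_0$ and the fixed ratio $s/\alpha$. The factor $K_0^{\alpha}$ comes from $\ell_0^{-\rho}$. The choice $\alpha=\varepsilon R^{k}$, with $\varepsilon$ small depending on $K_0$, turns this into a constant times $V(2R)R^{-q}e^{-2\varepsilon R^{k}}$. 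That tends to zero along $R_j=S_j/2$ with $\log V(S_j)=o(S_j^k)$, which contradicts $c_E>0$.
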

	
	Condition \eqref{eq:G46} requires only a suitable sequence of radii,
	rather than a uniform subcritical estimate for all large radii.
	In $G_4$, it includes polynomial growth of any fixed order, as well as
	\[
	V(R)\le C\exp(R^\alpha)\ (0<\alpha<1),
	\qquad
	V(R)\le C\exp\!\left(\frac R{\log R}\right).
	\]
	In $G_6$, let $k=m-q>1$. Each of the following is sufficient:
	\begin{equation}\label{eq:G6examples}
		\begin{aligned}
			V(R)&\le C\exp(cR^\alpha),&&0<\alpha<k,\\
			V(R)&\le C\exp(cR\log R),&&c>0,\\
			V(R)&\le C\exp\!\left(\frac{R^k}{\log R}\right).
		\end{aligned}
	\end{equation}
	In particular, the middle condition no longer restricts $c$ by a
	prescribed constant depending on $p,q,m$.
	
	\subsection{The exact entropy threshold on the homogeneous line}
	When $(p,q)\in G_5$, we have $p\ge0$ and $p+q=m-1$.
	Define
	\begin{equation}\label{eq:cstar}
		c_*(m,p)=
		\begin{cases}
			(p+1)\left(\dfrac{m-1}{p}\right)^{p/(p+1)},&p>0,\\[5pt]
			1,&p=0.
		\end{cases}
	\end{equation}
	
	\begin{theorem}[Strict entropy threshold]\label{thm:G5}
		Let $(p,q)\in G_5$. If
		\begin{equation}\label{eq:G5}
			h_1(M)<c_*(m,p),
		\end{equation}
		then \eqref{eq:main} has the Liouville property.
		For every $m>1,p\ge0$, there exists a complete noncompact model with
		volume entropy $c_*(m,p)$ on which \eqref{eq:main} admits a positive
		nonconstant admissible weak solution. Consequently, within the class
		of all complete manifolds, \eqref{eq:G5} cannot be replaced by
		$h_1(M)\le c_*(m,p)$.
	\end{theorem}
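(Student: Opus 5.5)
The plan is to exploit the homogeneity of \eqref{eq:main} on the line $p+q=m-1$ through the logarithmic substitution $v=\log u$. Since $\nabla u=u\nabla v$, we formally have $\Delta_m u=u^{m-1}\bigl(\Delta_m v+(m-1)|\nabla v|^m\bigr)$ and $u^p|\nabla u|^q=u^{p+q}|\nabla v|^q=u^{m-1}|\nabla v|^q$. Hence $v$ satisfies
\[
-\Delta_m v\ge F(|\nabla v|),\qquad F(t)=(m-1)t^m+t^q .
\]
To make this rigorous in the weak sense, I will test \eqref{eq:weak} with $\psi=u^{1-m}\zeta$, where $\zeta\ge0$ is in $\operatorname{Lip}_c(M)$. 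This $\psi$ is admissible because $u\in C^1$ is locally bounded above and away from zero. The outcome is
\[
\int_M|\nabla v|^{m-2}\ip{\nabla v}{\nabla\zeta}\dd\mu\ge\int_M F(|\nabla v|)\zeta\dd\mu .
\]
For $q<0$ the right-hand side is locally integrable by the admissibility of $u$.

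The constant $c_*$ enters through an elementary inequality. For $t>0$ we have $F(t)=t^{m-1}\bigl((m-1)t+t^{-p}\bigr)$. Minimizing $(m-1)t+t^{-p}$ over $t>0$ gives the minimizer $t_0=(p/(m-1))^{1/(p+1)}$ when $p>0$, with minimum value $(p+1)((m-1)/p)^{p/(p+1)}=c_*$. When $p=0$ the infimum is $1$, approached as $t\to0$. Hence $F(t)\ge c_*t^{m-1}$ for all $t\ge0$.

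Now fix $h_1(M)<c<c_*$ and use the exponential cutoff
\[
\zeta_R=(e^{-cr}-e^{-cR})_+ .
\]
It is compactly supported and Lipschitz, and it requires only first derivatives of $r$, so the cut locus causes no difficulty. Moreover $|\nabla\zeta_R|\le ce^{-cr}\mathbf 1_{B_R}$. Setting $A(R)=\int_{B_R}|\nabla v|^{m-1}e^{-cr}\dd\mu$, the weak inequality together with $F\ge c_*t^{m-1}$ yields
\[
(c_*-c)A(R)\le c_*e^{-cR}\int_{B_R}|\nabla v|^{m-1}\dd\mu .
\]

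The right-hand side is controlled by a Caccioppoli estimate for $v$. Testing the $v$-inequality with $\phi^m$, where $\phi=1$ on $B_R$, $\phi$ is supported in $B_{(1+\varepsilon)R}$, and $|\nabla\phi|\le(\varepsilon R)^{-1}$, and absorbing with Young's inequality into the term $(m-1)|\nabla v|^m$, gives
\[
\int_{B_R}|\nabla v|^m\dd\mu\le C(\varepsilon R)^{-m}V((1+\varepsilon)R).
\]
By H\"older's inequality, $\int_{B_R}|\nabla v|^{m-1}\dd\mu\le C_\varepsilon R^{1-m}V((1+\varepsilon)R)$. Therefore
\[
A(R)\le C\,e^{-cR}R^{1-m}V((1+\varepsilon)R).
\]

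The main obstacle is that $h_1$ is only a $\liminf$, and that enlarging the radius costs a factor $1+\varepsilon$. I would choose $\delta,\varepsilon>0$ with $(h_1+\delta)(1+\varepsilon)<c$, which is possible because $h_1<c$. Then take radii $\rho_k\to\infty$ with $\log V(\rho_k)\le(h_1+\delta)\rho_k$ and set $R_k=\rho_k/(1+\varepsilon)$. Along this sequence $A(R_k)\to0$. Since $A$ is nondecreasing, $A\equiv0$, so $\nabla v=0$ and $u$ is constant. If $q>0$ this is the Liouville conclusion. If $q\le0$, a constant is not an admissible solution: for $q<0$ the term $|\nabla u|^q$ is not locally integrable, and for $q=0$ the right-hand side $u^p>0$ contradicts $-\Delta_m u=0$. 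This gives nonexistence.

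For the sharpness statement I would take a rotationally symmetric model with area function $S(r)\asymp e^{c_*r}$, for instance $S(r)=e^{c_*r}$ for large $r$, smoothed near the origin. On it I would seek a radial solution $u=e^{v}$ with $v'(r)=-t_0$ for large $r$ (and $v'\to0$ slowly, as a $1/\log$-type decay, when $p=0$). For such $v$ the radial inequality reads $-(S|v'|^{m-2}v')'/S\ge F(|v'|)$, that is $c_*t_0^{m-1}\ge F(t_0)$. This holds with equality precisely because $t_0$ realizes the minimum defining $c_*$. Near the origin I would glue in a decreasing radial profile by a standard $C^1$ matching; this gives a model of volume entropy exactly $c_*$ carrying a positive nonconstant solution.
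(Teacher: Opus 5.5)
Your proof of the Liouville part is correct and follows the paper's route. It uses the same test $u^{1-m}\zeta$, which yields $-\Delta_m v\ge(m-1)|\nabla v|^m+|\nabla v|^q$, the same bound $F(t)\ge c_*t^{m-1}$, and an exponential weight whose rate lies strictly between $h_1(M)$ and $c_*$. The paper packages this as the single test $e^{-\sigma r}\eta_R^m$ with a unit-width cutoff, absorbing the cutoff term by Young against $(m-1)z^m$ to get $e^{-\sigma R}V(R+1)$ directly. Your truncated exponential plus a separate Caccioppoli bound also works, at the harmless price of the enlargement $(1+\varepsilon)R$.

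The gap is the endpoint model for $p=0$, where $q=m-1$ and $c_*=1$; the theorem asserts such a model for every $p\ge0$. You propose an area $S(r)\asymp e^{r}$ near infinity, but no such model carries a positive nonconstant radial solution. Since $(S|u'|^{m-2}u')'\le0$ and the flux vanishes at the pole, $f=-u'\ge0$. Once $f(r_1)>0$, the inequality reads $(Sf^{m-1})'\ge Sf^{m-1}$, so $S(r)f(r)^{m-1}\ge c\,e^{r}$ for $r\ge r_1$. If $S\le Ce^{r}$, then $f$ is bounded below by a positive constant and $u\to-\infty$. In your variables, with $S'/S=1$ and $g=-v'>0$, the $v$-inequality reduces to $g'\ge g^2$. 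This is incompatible with $g\to0$, so the suggested $1/\log$ decay cannot be realized on $S=e^{r}$.

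The repair is to give up $S\asymp e^{c_*r}$ and allow a subexponential factor. The required area satisfies $S'/S\ge 1+(m-1)g-(m-1)g'/g$. For $g=\alpha/r$ this becomes the paper's choice $A_\infty=e^r r^B$, $u=r^{-\alpha}$, $B\ge(m-1)(\alpha+1)$. It gives $-\Delta_m u/|u'|^{m-1}=1+[B-(m-1)(\alpha+1)]/r\ge1$, while the volume entropy is still exactly $1$.

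For $p>0$ your tail $u=Ce^{-t_0 r}$ on $S=e^{c_*r}$ is the radial analogue of the paper's horospherical solution $u=e^{-t_*b}$ on hyperbolic space, and it is fine at infinity. Two gluing details remain unaddressed, however.
\begin{enumerate}
\item The tail inequality holds with equality, so there is no slack. The transition must therefore be absorbed by choosing the area freely in the middle region, as in Lemma~\ref{lem:glue}.
\item When $q<0$ (that is, $p>m-1$), the critical point at the pole forces $f=cr^a$ there with $ap<1$ and $n+aq>0$.
\end{enumerate}
The paper's Busemann construction sidesteps both, because $|\nabla u|=t_*u$ never vanishes.
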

	
	The constant \eqref{eq:cstar} agrees with $H(p)$ in
	\cite{SXX2022}. The improvement here is to raise the
	nonexistence threshold to the same strict value.
	For example, $c_*=2$ when $m=2,p=1,q=0$, and $c_*=1$ when
	$m=2,p=0,q=1$. The earlier sufficient exponential coefficient is
	$1/(2e)$ in both examples.
	
	\begin{proposition}[A constant source coefficient]\label{prop:coefficient}
		Replace \eqref{eq:main} by
		\begin{equation}\label{eq:coefficient}
			-\Deltam u\ge \lambda u^p|\nabla u|^q,\qquad \lambda>0.
		\end{equation}
		The volume conditions in Theorems \ref{thm:G1}, \ref{thm:G3},
		\ref{thm:G46}, and Proposition \ref{prop:G2} remain unchanged.
		In $G_5$, the threshold becomes
		\begin{equation}\label{eq:cstar-lambda}
			c_{*,\lambda}(m,p)=
			\begin{cases}
				(p+1)\left(\dfrac{m-1}{p}\right)^{p/(p+1)}
				\lambda^{1/(p+1)},&p>0,\\[5pt]
				\lambda,&p=0.
			\end{cases}
		\end{equation}
	\end{proposition}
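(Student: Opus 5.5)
The plan is to reduce everything to the case $\lambda=1$ by a scaling of the solution, and to read off how the volume criteria and the $G_5$ threshold transform.

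\emph{Rescaling.} If $u$ solves \eqref{eq:coefficient}, set $v=tu$ with $t>0$. Then $-\Deltam v=t^{m-1}(-\Deltam u)$, and the source becomes
\[
t^{m-1}\lambda u^p|\nabla u|^q=\lambda t^{m-1-p-q}v^p|\nabla v|^q .
\]
So $v$ solves \eqref{eq:coefficient} with coefficient $\lambda t^{m-1-p-q}$. The admissible class of Definition~\ref{def:solution}, and constancy or nonexistence, are invariant under $u\mapsto tu$.

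\emph{Regions other than $G_5$.} Whenever $p+q\neq m-1$, choose $t$ with $\lambda t^{m-1-p-q}=1$. Then the theorems for $\lambda=1$ apply verbatim, with unchanged volume conditions. This covers $G_1$ (where $p+q>m-1$), $G_3$ (since $p<0<q-m+1$ there, hence $p+q\ne m-1$), $G_2$ (either by scaling when $p+q\ne m-1$, or, more simply, because Proposition~\ref{prop:G2} uses only $m$-superharmonicity, which holds for any $\lambda>0$), and $G_4\cup G_6$ (where $p+q<m-1$).

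\emph{The homogeneous line $G_5$.} Here scaling of $u$ is useless, so I would rescale the metric instead. Put $\tilde g=s^2g$. Then $|\nabla u|_{\tilde g}=s^{-1}|\nabla u|_g$ and $\dd\tilde\mu=s^n\dd\mu$. The weak form \eqref{eq:weak} becomes
\[
\int_M |\nabla u|_g^{m-2}\ip{\nabla u}{\nabla\psi}_g\, s^{n-m}\dd\mu
\ \ge\ \lambda s^{q-m}\int_M u^p|\nabla u|_g^q\,\psi\, s^{n}\dd\mu ,
\]
after dividing the original inequality appropriately. Concretely, $u$ solves $-\Delta_{m,\tilde g}u\ge \lambda s^{q-m}u^p|\nabla u|^q_{\tilde g}$. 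Choosing $s=\lambda^{1/(m-q)}$ gives coefficient $1$ (note $q\le m-1<m$). Distances scale by $s$, so $\tilde h_1=h_1/s$. Applying Theorem~\ref{thm:G5} on $(M,\tilde g)$ yields Liouville whenever
\[
h_1(M)<\lambda^{1/(m-q)}c_*(m,p)=\lambda^{1/(p+1)}c_*(m,p),
\]
using $m-q=p+1$. This is exactly \eqref{eq:cstar-lambda}, since for $p=0$ we have $c_*=1$. The same rescaling applied to the equality model in Theorem~\ref{thm:G5} transports it to a model with entropy $c_{*,\lambda}$, which gives sharpness.

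\emph{Main obstacle.} The only delicate point is bookkeeping in $G_5$: checking that the power $s^{q-m}$ is correct in the weak formulation, including the measure factor $s^n$ appearing on both sides. I would verify this directly from \eqref{eq:weak} with the test function $\psi$ unchanged. Note also that Theorem~\ref{thm:G5} is curvature-free, so it applies to the rescaled manifold with no additional hypotheses.
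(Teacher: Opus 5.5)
Your scaling idea works wherever you actually apply it, but one step is false: from $p<0<q-m+1$ you cannot conclude $p+q\neq m-1$. The equation $p+q=m-1$ says $p=-(q-m+1)$, and this is compatible with $p<0$ and $m-1<q<m$. So the whole segment $\{p+q=m-1,\ -1<p<0\}$ lies in $G_3$ (for instance $m=2$, $p=-\tfrac12$, $q=\tfrac32$). Recall that $G_5$ is deliberately restricted to $p\ge0$; the continuation of the homogeneous line into $p<0$ belongs to $G_3$ and $G_2$. You cover the $G_2$ part through $m$-superharmonicity. On the $G_3$ part, however, the substitution $u\mapsto tu$ returns the coefficient $\lambda t^{0}=\lambda$, so nothing is normalized and Theorem~\ref{thm:G3} cannot be invoked as stated. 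As written, your proof does not establish the proposition for these parameters.

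The missing case can be closed with a tool you already use. The metric rescaling $\tilde g=s^2g$ turns the coefficient into $\lambda s^{q-m}$. Since $q<m$ in $G_3$, the choice $s=\lambda^{1/(m-q)}$ normalizes it. Because $\tilde V(R)=s^nV(R/s)$, the substitution $R=s\rho$ shows that the integral \eqref{eq:G3int} for $\tilde g$ is a positive constant multiple of the one for $g$ (up to the lower limit), so its divergence is unaffected. In fact this rescaling alone suffices in every region with $q<m$: divergence of the integrals in \eqref{eq:unified} and the condition $h_{m-q}(M)=0$ are both invariant under $g\mapsto s^2g$.

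Alternatively, follow the paper, which does not rescale at all but carries $\lambda$ through each test-function proof. In $G_3$ it only multiplies $\int F\phi^b$ in \eqref{eq:G3basic}, contributing a factor $\lambda^{-1}$ to the constant in \eqref{eq:G3local}, which Lemma~\ref{lem:annular} tolerates.

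Also correct the display in your $G_5$ paragraph. With $|\nabla u|_g^q$ and $s^n\dd\mu$ on the right, it carries a spurious factor $s^q$. The right-hand side should be $\lambda s^{q-m}\int_M u^p|\nabla u|_{\tilde g}^q\psi\dd\tilde\mu$, which is what your next sentence correctly asserts.

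Once repaired, your route is a legitimate and more conceptual alternative to the paper's region-by-region bookkeeping. It also yields the $G_5$ endpoint models for $\lambda\neq1$ simply by rescaling those of Theorem~\ref{thm:G5}. This includes the case $p=0$, where the paper instead replaces $e^r$ by $e^{\lambda r}$ in the area density.
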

	
	\subsection{Comparison with earlier conditions and the range of the models}
	Table \ref{tab:comparison} compares the assumptions in
	\cite{SXX2022} with the conditions used here. The pure gradient and capacity
	specializations have the prior interpretations described above.
	The notation $\lesssim$ denotes an upper bound up to a positive
	constant, always for sufficiently large radii.
	\begin{table}[htbp]
		\centering\small
		\caption{Comparison of volume conditions in the six regions.}\label{tab:comparison}
		\begin{tabular}{>{\raggedright\arraybackslash}p{1.2cm}
				>{\raggedright\arraybackslash}p{5.2cm}
				>{\raggedright\arraybackslash}p{6.75cm}}
			\toprule
			Region&Earlier condition \cite{SXX2022}&Condition used here\\
			\midrule
			$G_1$&$V\lesssim R^{D_1}(\log R)^{B_1}$
			&$\mathcal I_1(V)=\infty$, Theorem \ref{thm:G1}\\[5pt]
			$G_2$&$V\lesssim R^m(\log R)^{m-1}$
			&$\mathcal I_2(V)=\infty$; already noted in the original paper\\[5pt]
			$G_3$&$V\lesssim R^{D_3}(\log R)^{B_3}$
			&$\mathcal I_3(V)=\infty$, Theorem \ref{thm:G3}\\[5pt]
			$G_4$&$V\lesssim R^\alpha$, fixed $\alpha>0$
			&$h_1(M)=0$, Theorem \ref{thm:G46}\\[5pt]
			$G_5$&$V\lesssim e^{\kappa R}$,\newline
			$\kappa<\min\{m-1,1\}/(2e)$
			&$h_1(M)<c_*(m,p)$, Theorem \ref{thm:G5}\\[5pt]
			$G_6$&$V\lesssim e^{\kappa R\log R}$,\newline
			$\kappa<(m-1-q)/(m-1-p-q)$
			&$h_{m-q}(M)=0$, Theorem \ref{thm:G46}\\
			\bottomrule
		\end{tabular}
	\end{table}
	
	\begin{proposition}[Model growth scales in the subhomogeneous region]\label{prop:models}
		Let $(p,q)\in G_6$, and write $d_0=m-1-q>0$ and $k=m-q=d_0+1$.
		\begin{enumerate}
			\item For every $\ell>k$ and $\lambda>0$, there exists a smooth
			complete model manifold with $\log V(R)\sim\lambda R^\ell$ on which
			\eqref{eq:main} admits a positive nonconstant admissible weak solution.
			\item For every $\beta>d_0$ and $\lambda>0$, such a model also exists
			with $\log V(R)\sim\lambda R^k(\log R)^\beta$.
		\end{enumerate}
		For $(p,q)\in G_4$, any prescribed positive volume entropy $H>0$
		can coexist with a positive nonconstant solution on a suitable
		hyperbolic space.
	\end{proposition}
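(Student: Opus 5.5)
We construct radial solutions on rotationally symmetric models $M_\sigma=\R^n$ with metric $dr^2+\sigma(r)^2 d\theta^2$, where $\sigma$ is smooth and positive for $r>0$, $\sigma(r)=r$ near $0$, and $S(r)=\omega_{n-1}\sigma(r)^{n-1}$ is the area of the geodesic sphere. For radial $u=u(r)$ with $u'<0$ for $r>0$, the inequality \eqref{eq:main} becomes
\[
\bigl(S(r)|u'|^{m-1}\bigr)'\ge S(r)\,u^p|u'|^q .
\]
The plan is to prescribe the flux $\Phi(r)=S(r)|u'(r)|^{m-1}$ rather than $\sigma$. We also choose $u$ so that the $C^1$ and local integrability requirements of Definition~\ref{def:solution} hold at the origin. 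In the radial setting the weak inequality \eqref{eq:weak} follows from the pointwise one by integrating in polar coordinates, provided the flux vanishes at $r=0$. Since $u'<0$ away from $0$, $|\nabla u|>0$ almost everywhere, so negative $q$ causes no trouble.

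\textbf{Item (1) and item (2) in $G_6$.} Take $\log S(r)\sim\lambda \ell R^{\ell-1}\cdot(\ell)^{-1}\ell$; more simply, prescribe $S(r)=\exp(\phi(r))$ for large $r$, with $\phi(r)=\lambda r^\ell$ or $\phi(r)=\lambda r^k(\log r)^\beta$. Then $V(R)=\int_0^R S$ satisfies $\log V\sim\phi$. Seek $u$ decreasing to a positive limit $u_\infty$, say $u\in[1,2]$, so that $u^p$ is comparable to a constant. Put $g=|u'|$. The inequality reads
\[
(\phi' g^{m-1}+(m-1)g^{m-2}g')\ge c\, g^{q},
\]
and it suffices that $\phi' g^{m-1}\ge 2c g^q$ while $g'$ is only mildly negative. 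Choose $g=\phi'(r)^{-1/d_0}\cdot\varepsilon$; then $\phi' g^{m-1-q}=\varepsilon^{d_0}\phi'^{\,1-(m-1-q)/d_0}$. This naive choice gives exactly exponent zero, which is too balanced. Instead take $g=\varepsilon\,\phi'(r)^{-a}$ with $a$ slightly less than $1/d_0$. Then $\phi' g^{d_0}\to\infty$, and the term $(m-1)g^{m-2}g'$ is of relative size $|g'|/(g\phi')\sim \phi''/\phi'^2\to0$, so it is absorbed.

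The remaining constraint is $\int^\infty g<\infty$, which is needed so that $u$ stays bounded between positive constants. We have $g\sim r^{-a(\ell-1)}$ in case (1), and integrability requires $a(\ell-1)>1$. Since $\ell>k=d_0+1$, we get $(\ell-1)/d_0>1$, so $a$ can be chosen in the interval $\bigl(1/(\ell-1),1/d_0\bigr)$. In case (2), $\phi'\sim r^{d_0}(\log r)^\beta$, so $g\sim r^{-ad_0}(\log r)^{-a\beta}$. Take $a=1/d_0$ exactly; then $g\sim r^{-1}(\log r)^{-\beta/d_0}$, which is integrable precisely because $\beta>d_0$. With $a=1/d_0$ the product $\phi' g^{d_0}=\varepsilon^{d_0}$ is constant. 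We choose $\varepsilon$ large instead of small, since the required constant is $c\le \varepsilon^{d_0}/2$ times $\sup u^p$, while $u$ remains bounded. Here the possibly large total variation $\int g$ is handled by starting at a large radius $r_0$, where the tail $\int_{r_0}^\infty g$ is small.

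\textbf{The core.} On $[0,r_0]$, take $\sigma(r)=r$ near $0$ and interpolate smoothly to the prescribed $\phi$. The difficulty, and the main obstacle, is to keep the differential inequality valid on this compact transition region and at the origin. There we choose $u=A-\tfrac{\kappa}{?}r^{m/(m-1)}$-type profiles, for example the explicit radial $m$-superharmonic profile with $-\Delta_m u=\text{const}>0$ near $0$. This profile is $C^1$ with $u'(0)=0$, and $u^p|u'|^q$ is locally integrable even for $q<0$ because $|u'|\sim r^{1/(m-1)}$. A compactness argument then finishes: scale the source constant, which Proposition~\ref{prop:coefficient} shows is harmless for existence up to replacing $u$ by $tu$ using homogeneity $p+q-(m-1)<0$. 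On the compact core, $(S g^{m-1})'$ is bounded below by a positive constant provided we choose the flux $\Phi$ strictly increasing and define $g=(\Phi/S)^{1/(m-1)}$, while the right side is bounded. Multiplying $u$ by small $t$ makes the right side dominate less, since $t^{p+q}$ exceeds $t^{m-1}$ as $t\to0$. So we prescribe a smooth strictly increasing $\Phi$ throughout, matching $\varepsilon^{m-1}S\phi'^{-a(m-1)}$ at infinity, and rescale.

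\textbf{The $G_4$ case.} On hyperbolic space of curvature $-\kappa^2$ with $(n-1)\kappa=H$, take $\phi'\to H$ and $q=m-1$. The inequality becomes $H g^{m-1}+(m-1)g^{m-2}g'\ge u^p g^{m-1}$. We take $g=e^{-\delta r}$ with $\delta$ small, so $u$ decreases to a positive limit. We need $H-(m-1)\delta\ge u^p$. Because $p<0$, $u^p$ is small when $u$ is large, so we shift $u$ by a large constant $A$ and achieve this for $u\ge A$. The core near the origin is treated by the same flux-prescription and scaling argument. Here the scaling is $tu$, where $t^{p}$ is small for large $t$, using $p<0$.
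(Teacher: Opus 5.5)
Your tails are fine: a power $a\in(1/(\ell-1),1/d_0)$ in (1), and the borderline $a=1/d_0$ with $\beta>d_0$ in (2). Your radial $G_4$ argument on hyperbolic space is also fine. The gap is at the pole in part of $G_6$. With the Euclidean core $S=\omega_{n-1}r^{n-1}$ and your profile $-\Delta_m u\equiv\mathrm{const}$, i.e.\ $|u'|=c\,r^{1/(m-1)}$, the source behaves like $u^p|u'|^q\asymp r^{q/(m-1)}$. Since $G_6$ contains every $q<0$ (with $p<m-1-q$), consider such a $q$. The source then blows up as $r\downarrow0$ while $-\Delta_m u$ stays constant. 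So the ratio $-\Delta_m u/(u^p|\nabla u|^q)$ tends to $0$, and the supersolution inequality fails on a punctured neighbourhood of the pole. Rescaling $u\mapsto tu$ only multiplies this ratio by the constant $t^{m-1-p-q}$, so it cannot help. Moreover, local integrability $\int_0 r^{n-1+q/(m-1)}\,dr<\infty$ fails outright for $q\le-n(m-1)$, so your claim that the source is integrable ``even for $q<0$'' is false in general. For $|u'|=c\,r^{a}$ on the Euclidean core, one computes $-\Delta_m u=c^{m-1}(n-1+a(m-1))\,r^{a(m-1)-1}$, so the ratio is $\asymp r^{ad_0-1}$. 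You need $ad_0\le1$ and $n+aq>0$. Your $a=1/(m-1)$ gives $ad_0<1$ only when $q>0$; for $q=0$ it gives a positive constant ratio, and for $q<0$ it fails. The repair is the paper's choice \eqref{eq:choosepole}: take $0<a<1/d_0$, and additionally $a<n/(-q)$ when $q<0$.

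Second, your rescaling goes the wrong way. Since $-\Delta_m(tu)=t^{m-1}(-\Delta_m u)$ and $(tu)^p|\nabla(tu)|^q=t^{p+q}u^p|\nabla u|^q$, the ratio is multiplied by $t^{m-1-p-q}$. Because $m-1-p-q>0$, you need $t\to\infty$. Small $t$ makes the source dominate more, which is exactly what your own observation $t^{p+q}\gg t^{m-1}$ as $t\to0$ says.

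With both corrections your route is a legitimate alternative to the paper's. You fix the metric and prescribe a strictly increasing flux matched to $c\,r^{a}$ at the pole and to your tails at infinity. You then check that the ratio is bounded below by a positive constant on $(0,\infty)$, and spend the homogeneity $m-1-p-q>0$ on the compact core. The paper instead fixes $u$ and absorbs the core into the metric through Lemma~\ref{lem:glue}, taking $A'/A=H+1$ there; this needs no homogeneity, so it also serves $G_1$--$G_3$. In $G_4$ the paper avoids the pole entirely with the nonradial $u=L+e^{-tb}$, where $b$ is a Busemann function. Your radial version works too, since $q=m-1>0$ makes the pole harmless and $p<0$ lets large $t$ shrink $u^p$.
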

	
	Theorem \ref{thm:G46} and Proposition \ref{prop:models} identify the
	separating power scale $m-q$ in $G_6$ from opposite sides:
	exponential growth with a power strictly below this value satisfies
	the nonexistence condition, whereas model examples exist for every
	strictly larger power. These results do not settle general critical
	growth of the form $\exp(CR^{m-q})$, nor do they classify all
	logarithmic corrections at that power. Likewise, for the integral
	criteria, \eqref{eq:logs} is asserted only as a sufficient condition;
	convergence of a comparison integral is not a substitute for an
	existence construction.
	
	\section{First-order tests and annular summation}\label{sec:prelim}
	\subsection{First-order cutoffs and admissibility of the test functions}
	By the Hopf--Rinow theorem, closed bounded balls in $M$ are compact.
	The distance function $r$ is $1$-Lipschitz, and hence its weak gradient satisfies
	\begin{equation}\label{eq:distancegrad}
		|\nabla r|\le1\quad\text{almost everywhere}.
	\end{equation}
	For a Lipschitz function $\chi:[0,\infty)\to[0,1]$,
	the chain rule gives
	$|\nabla(\chi(r))|\le|\chi'(r)|$ almost everywhere.
	We never differentiate $\chi(r)$ a second time,
	so no classical second derivative needs to be defined on the cut locus.
	
	For the integral criteria, we use the cutoff
	\begin{equation}\label{eq:cutoff-annulus}
		\phi_R(x)=
		\begin{cases}
			1,&r(x)\le2R,\\
			(4R-r(x))/(2R),&2R<r(x)<4R,\\
			0,&r(x)\ge4R.
		\end{cases}
	\end{equation}
	Writing $\A_R=\{x:2R<r(x)<4R\}$, we have
	\begin{equation}\label{eq:cutoffbounds}
		0\le\phi_R\le1,\quad
		\supp\nabla\phi_R\subset\overline{\A_R},\quad
		|\nabla\phi_R|\le\frac1{2R}.
	\end{equation}
	Geodesic spheres have zero volume measure; including the boundary of the annulus
	in the notation therefore does not affect the integrals.
	The junctions of the piecewise linear cutoff produce no additional terms,
	since only weak first derivatives are used throughout the paper.
	
	If $K$ is a compact set containing the support of a test function, then
	\[
	0<\min_Ku\le\max_Ku<\infty,\qquad
	\sup_K|\nabla u|<\infty.
	\]
	Consequently, for a fixed real number $a$ and $b\ge1$,
	$u^{-a}\phi^b$ is a compactly supported Lipschitz function.
	Even when $a,b$ are subsequently chosen as functions of the radius,
	the test associated with each fixed radius remains admissible.
	What then requires separate verification is the dependence of the constants
	on these parameters, rather than the local admissibility of the test itself.
	
	If the distributional inequality is initially defined using smooth compactly
	supported tests, nonnegative smoothing in finitely many coordinate charts,
	together with a partition of unity, produces a uniformly bounded sequence of
	smooth tests supported in a fixed compact set.
	This sequence converges uniformly to the prescribed Lipschitz test,
	and its gradients converge in $L^m$.
	The flux $|\nabla u|^{m-2}\nabla u$ belongs to
	$L^{m/(m-1)}$ on that compact set, so H\"older's inequality permits passage to
	the limit on the left-hand side.
	On the right-hand side, the limit follows from $f\in L^1(K)$ and uniform
	convergence. Thus the use of Lipschitz tests in \eqref{eq:weak}
	imposes no substantive additional restriction.
	
	The basic identity used repeatedly below comes from the test
	$\psi=u^{-a}\phi^b$. Since
	\[
	\nabla(u^{-a}\phi^b)
	=-a u^{-a-1}\phi^b\nabla u
	+b u^{-a}\phi^{b-1}\nabla\phi,
	\]
	the weak inequality \eqref{eq:weak} yields
	\begin{equation}\label{eq:basicnegative}
		\int_M u^{p-a}|\nabla u|^q\phi^b\dd\mu
		+a\int_M u^{-a-1}|\nabla u|^m\phi^b\dd\mu
		\le b\int_{\supp\nabla\phi}
		u^{-a}|\nabla u|^{m-1}\phi^{b-1}|\nabla\phi|\dd\mu .
	\end{equation}
	For $a>0$, both terms on the left are nonnegative.
	Testing with the ordinary cutoff gives
	\begin{equation}\label{eq:basicordinary}
		\int_M u^p|\nabla u|^q\phi^b\dd\mu
		\le b\int_{\supp\nabla\phi}
		|\nabla u|^{m-1}\phi^{b-1}|\nabla\phi|\dd\mu .
	\end{equation}
	The right-hand sides of both inequalities involve only first derivatives
	of the cutoff. In what follows, the measure $\dd\mu$ will be omitted
	from integrals whenever there is no ambiguity.
	
	\subsection{An integral criterion from the recurrence of annular masses}
	We first isolate the argument common to the two integral criteria below.
	The density in this lemma must be fixed and cannot depend on $R$.
	
	\begin{lemma}[Summation of annular masses]\label{lem:annular}
		Let $F\ge0$ and $F\in L^1_{\loc}(M)$.
		For the cutoffs in \eqref{eq:cutoff-annulus},
		suppose that there exist fixed constants $b\ge1$, $Q>0$, $\gamma>0$, and $C>0$
		such that, for all sufficiently large $R$,
		\begin{equation}\label{eq:abstract-local}
			I_R:=\int_M F\phi_R^b
			\le C R^{-Q/(1+\gamma)}
			V(4R)^{\gamma/(1+\gamma)}
			\left(\int_{\A_R}F\phi_R^b\right)^{1/(1+\gamma)}.
		\end{equation}
		If
		\begin{equation}\label{eq:abstract-integral}
			\int^\infty\frac{R^{Q-1}}{V(R)^\gamma}\dd R=\infty,
		\end{equation}
		then $F=0$ almost everywhere.
	\end{lemma}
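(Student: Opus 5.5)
We argue by contradiction. Suppose $F$ is not zero almost everywhere, and set $G(t)=\int_{B_t}F\dd\mu$. This function is nondecreasing and finite because $F\in L^1_{\loc}$, and $G(t)\ge G(t_0)>0$ for all $t\ge t_0$, for some $t_0$. The plan is to turn \eqref{eq:abstract-local} into a recurrence for $G$ along dyadic radii, telescope the negative powers $G^{-\gamma}$, and contradict \eqref{eq:abstract-integral}. The density $F$ is the same at every radius, so $G$ is a single monotone function and the telescoping is legitimate.

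\emph{Step 1: comparison of $I_R$ with $G$.} Since $0\le\phi_R\le1$, $\phi_R=1$ on $B_{2R}$ and $\supp\phi_R\subset\overline{B_{4R}}$, we have
\[
G(2R)\le I_R\le G(4R),\qquad \int_{\A_R}F\phi_R^b\le G(4R)-G(2R).
\]
From \eqref{eq:abstract-local} we extract two consequences. First, bounding the annular integral by $G(4R)-G(2R)$ and raising to the power $1+\gamma$ gives, with $t=2R$,
\[
G(t)^{1+\gamma}\le C\,t^{-Q}V(2t)^{\gamma}\bigl(G(2t)-G(t)\bigr).\tag{a}
\]
Second, bounding the annular integral crudely by $I_R$ gives $I_R^{\gamma/(1+\gamma)}\le C R^{-Q/(1+\gamma)}V(4R)^{\gamma/(1+\gamma)}$, and since $I_R\ge G(2R)$ this yields
\[
t^{Q}V(2t)^{-\gamma}\le C\,G(t)^{-\gamma}.\tag{b}
\]

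\emph{Step 2: telescoping along $t_k=2^k t_0$.} The sum $\sum_k\bigl(G(t_k)^{-\gamma}-G(t_{k+1})^{-\gamma}\bigr)$ is at most $G(t_0)^{-\gamma}<\infty$. We show each term is at least $c\,t_k^QV(2t_k)^{-\gamma}$, splitting into two cases.
\begin{enumerate}
\item If $G(t_{k+1})\le 2G(t_k)$, the mean value theorem gives a term $\ge \gamma\,(G(t_{k+1})-G(t_k))\,G(t_{k+1})^{-1-\gamma}$. This is at least $c\,(G(t_{k+1})-G(t_k))\,G(t_k)^{-1-\gamma}$, which by (a) is $\ge c\,t_k^QV(2t_k)^{-\gamma}$.
\item If $G(t_{k+1})>2G(t_k)$, the term is $\ge(1-2^{-\gamma})G(t_k)^{-\gamma}$, which by (b) is again $\ge c\,t_k^QV(2t_k)^{-\gamma}$.
\end{enumerate}
The second case is exactly where (a) alone fails, and it is the step I expect to need the most care. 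Estimate (b), the ``self-improving'' use of \eqref{eq:abstract-local} with the annulus replaced by the whole support, is what closes it.

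\emph{Step 3: comparison with the integral.} Hence $\sum_k t_k^QV(2t_k)^{-\gamma}<\infty$. Since $V$ is nondecreasing,
\[
\int_{2t_k}^{4t_k}R^{Q-1}V(R)^{-\gamma}\dd R\le C_Q\,t_k^QV(2t_k)^{-\gamma}.
\]
These dyadic intervals cover $[2t_0,\infty)$, so the integral in \eqref{eq:abstract-integral} converges. This contradicts the hypothesis, and so $G\equiv0$, i.e. $F=0$ almost everywhere.
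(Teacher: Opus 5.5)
Your proof is correct and follows essentially the paper's route: a recurrence for the mass of the fixed density on balls, built from the lower increment bound (a) and the upper bound (b), then telescoping of the negative $\gamma$-th powers, and finally a monotonicity comparison of the series with the integral. The differences are cosmetic: the paper uses radii $8^jR_*$ and avoids your case split by linearizing $1-(1+z_j)^{-\gamma}$ on $0<z_j=c(M_j/g_j)^\gamma\le1$, where the bound $z_j\le1$ is your (b). You should also state explicitly that $t_0$ is chosen large enough for \eqref{eq:abstract-local} to hold at every $R\ge t_0/2$.
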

	
	\begin{proof}
		To retain the information carried by the annulus, set
		\[
		J_R=\int_{\A_R}F\phi_R^b,\qquad
		D=\frac Q\gamma,\qquad g_R=\frac{V(4R)}{R^D},\qquad
		c=C^{-(1+\gamma)}.
		\]
		Raising \eqref{eq:abstract-local} to the power $1+\gamma$ and rearranging gives
		\begin{equation}\label{eq:masslower}
			J_R\ge c\,g_R^{-\gamma}I_R^{1+\gamma}.
		\end{equation}
		On the other hand, $J_R\le I_R$.
		If $I_R>0$, division by $I_R$ yields
		\begin{equation}\label{eq:massupper}
			I_R\le c^{-1/\gamma}g_R.
		\end{equation}
		These estimates provide, respectively, a minimum amount of mass in the annulus
		and an upper bound for the mass in the inner ball.
		Both estimates must be retained.
		
		Suppose, to the contrary, that $F\not\equiv0$.
		Choose $R_*>0$ sufficiently large that
		$\int_{B_{R_*}}F>0$ and that the local estimate holds for $R\ge R_*$.
		Set
		\[
		R_j=8^jR_*,\qquad
		M_j=\int_{B_{R_j}}F,\qquad
		g_j=\frac{V(4R_j)}{R_j^D}\quad(j=0,1,\ldots).
		\]
		Since $\phi_{R_j}=1$ on $B_{2R_j}$,
		we have $M_j\le I_{R_j}$.
		Moreover,
		\[
		\A_{R_j}\subset B_{R_{j+1}}\setminus B_{R_j},
		\qquad 0\le\phi_{R_j}^b\le1.
		\]
		Thus \eqref{eq:masslower}--\eqref{eq:massupper} imply
		\begin{equation}\label{eq:recurrence}
			0<M_j\le c^{-1/\gamma}g_j,\qquad
			M_{j+1}-M_j\ge c\,g_j^{-\gamma}M_j^{1+\gamma}.
		\end{equation}
		Here $M_j$ is the mass of a fixed density without a cutoff weight.
		Consequently, the comparisons between different radii concern the same quantity.
		
		Define
		\[
		z_j=c\left(\frac{M_j}{g_j}\right)^\gamma.
		\]
		The first inequality gives $0<z_j\le1$,
		while the second gives $M_{j+1}\ge M_j(1+z_j)$.
		Taking negative $\gamma$-th powers, we obtain
		\begin{align}
			M_j^{-\gamma}-M_{j+1}^{-\gamma}
			&\ge M_j^{-\gamma}\bigl[1-(1+z_j)^{-\gamma}\bigr]\notag\\
			&=M_j^{-\gamma}\int_0^{z_j}\gamma(1+t)^{-\gamma-1}\dd t\notag\\
			&\ge\gamma2^{-\gamma-1}M_j^{-\gamma}z_j\notag\\
			&=\gamma2^{-\gamma-1}c\,g_j^{-\gamma}.
			\label{eq:telescoping}
		\end{align}
		Summing from $j=0$ to $N$, the left-hand side telescopes:
		\[
		\gamma2^{-\gamma-1}c\sum_{j=0}^N g_j^{-\gamma}
		\le M_0^{-\gamma}-M_{N+1}^{-\gamma}
		\le M_0^{-\gamma}.
		\]
		Letting $N\to\infty$, we find
		\begin{equation}\label{eq:seriesfinite}
			\sum_{j=0}^\infty \frac{R_j^Q}{V(4R_j)^\gamma}<\infty.
		\end{equation}
		
		We now relate the series to the integral using only the monotonicity of volume.
		For $4R_j\le R\le32R_j$, we have $V(R)\ge V(4R_j)$, and hence
		\begin{align}
			\int_{4R_j}^{32R_j}\frac{R^{Q-1}}{V(R)^\gamma}\dd R
			&\le \frac1{V(4R_j)^\gamma}
			\int_{4R_j}^{32R_j}R^{Q-1}\dd R\notag\\
			&=\frac{32^Q-4^Q}{Q}\,
			\frac{R_j^Q}{V(4R_j)^\gamma}.
			\label{eq:series-integral}
		\end{align}
		Since $4R_{j+1}=32R_j$, these intervals meet consecutively
		and cover $[4R_*,\infty)$.
		By \eqref{eq:seriesfinite}, the tail of the integral in
		\eqref{eq:abstract-integral} is finite,
		contrary to the hypothesis. Therefore $F=0$ almost everywhere.
	\end{proof}
	
	\begin{remark}
		The proof uses neither $V(8R)\le C V(R)$
		nor the sphere area $V'(R)$.
		The sole purpose of choosing a scale ratio of eight is to place each testing
		annulus between the two balls associated with successive masses;
		it does not prescribe a volume growth law.
		The lemma converts a local first-order testing estimate into a global integral
		criterion and is the common source of the iterated logarithmic conclusions
		in this paper.
	\end{remark}
	
	\section{Proofs of the integral volume criteria}\label{sec:integral}
	\subsection{The superhomogeneous region}
	In this subsection we prove Theorem \ref{thm:G1}.
	Write
	\[
	\delta=p+q-m+1>0,\qquad z=|\nabla u|,\qquad f=u^p z^q.
	\]
	Assume first that $p>0$.
	Choose $a>0$ sufficiently small but fixed; for example, require
	\begin{equation}\label{eq:choosea}
		a<\min\left\{\frac{m-1}{2},
		\frac{\delta}{2(1+|q-m+1|)},
		\frac{p(m-1)}{2(1+|q|)}\right\}.
	\end{equation}
	This choice of $a$ is independent of $R$ throughout the proof.
	
	\paragraph{Step 1: choosing two sets of H\"older weights.}
	Set
	\begin{equation}\label{eq:G1weights}
		\begin{aligned}
			K&=mp+q(a+1),\\
			\alpha&=\frac{p(m-1)+aq}{K},&
			\beta&=\frac{m-1-a}{K},&
			\lambda_0&=\frac{\delta+a}{K},\\
			\theta&=\frac{p(m-1)}{K},&
			\eta&=\frac{(a+1)(m-1)}{K},&
			\kappa_0&=\frac{\delta+a(q-m+1)}{K}.
		\end{aligned}
	\end{equation}
	To account for a possibly negative $q$ when determining the sign of the
	denominator, we first check the numerators directly.
	Condition \eqref{eq:choosea} gives
	\[
	p(m-1)+aq>\frac{p(m-1)}2>0,\quad
	m-1-a>\frac{m-1}2>0,\quad \delta+a>0.
	\]
	The sum of these three numerators is
	\[
	p(m-1)+aq+(m-1-a)+(\delta+a)
	=mp+q(a+1)=K,
	\]
	so $K>0$.
	Likewise, $\delta+a(q-m+1)>\delta/2>0$.
	All six weights are therefore strictly positive, and
	\begin{equation}\label{eq:G1sum}
		\alpha+\beta+\lambda_0=1,\qquad
		\theta+\eta+\kappa_0=1,\qquad
		1-\alpha=\frac{p+q}{K}>0.
	\end{equation}
	The positivity in the last identity follows from $p+q>m-1>0$.
	Consequently, the two sets of H\"older exponents
	\[
	\left(\frac1\alpha,\frac1\beta,\frac1{\lambda_0}\right),
	\qquad
	\left(\frac1\theta,\frac1\eta,\frac1{\kappa_0}\right)
	\]
	consist entirely of numbers strictly greater than $1$,
	and the reciprocals in each set sum to $1$.
	Fix
	\begin{equation}\label{eq:G1b}
		b\ge\max\{1/\lambda_0,1/\kappa_0\}.
	\end{equation}
	
	\paragraph{Step 2: controlling the energy by a negative-power test.}
	For the moment, let $\phi$ be any cutoff satisfying
	$0\le\phi\le1$ and $|\nabla\phi|\le L/R$,
	with the support of its gradient contained in a set $\A$ of finite volume.
	Set
	\[
	E=\int_M u^{-a-1}z^m\phi^b,\qquad
	I=\int_M f\phi^b,\qquad
	J=\int_{\A}f\phi^b.
	\]
	Dropping the source term from \eqref{eq:basicnegative}, we obtain
	\begin{equation}\label{eq:G1first}
		aE\le b\int_{\A}u^{-a}z^{m-1}\phi^{b-1}|\nabla\phi|.
	\end{equation}
	The essential factorization is
	\begin{equation}\label{eq:G1factor1}
		u^{-a}z^{m-1}
		=(u^{-a-1}z^m)^\alpha (u^p z^q)^\beta.
	\end{equation}
	Indeed, the exponent of $u$ satisfies
	\begin{align*}
		-(a+1)\alpha+p\beta
		&=\frac{-(a+1)[p(m-1)+aq]+p(m-1-a)}K\\
		&=\frac{-amp-aq(a+1)}K=-a;
	\end{align*}
	the exponent of $z$ satisfies
	\begin{align*}
		m\alpha+q\beta
		&=\frac{mp(m-1)+maq+q(m-1-a)}K\\
		&=\frac{(m-1)[mp+q(a+1)]}K=m-1.
	\end{align*}
	Including the cutoff powers in this factorization gives
	\[
	u^{-a}z^{m-1}\phi^{b-1}|\nabla\phi|
	=(u^{-a-1}z^m\phi^b)^\alpha
	(f\phi^b)^\beta
	\phi^{b\lambda_0-1}|\nabla\phi|.
	\]
	Applying H\"older's inequality with the first set of three exponents yields
	\begin{align}
		aE
		&\le b E^\alpha J^\beta
		\left(\int_{\A}
		\phi^{b-1/\lambda_0}
		|\nabla\phi|^{1/\lambda_0}\right)^{\lambda_0}\notag\\
		&\le \frac{bL}{R}E^\alpha J^\beta\mu(\A)^{\lambda_0}.
		\label{eq:G1energy}
	\end{align}
	The condition $b\ge1/\lambda_0$ ensures that the remaining cutoff power
	is nonnegative. Since $0\le\phi\le1$, this factor can be discarded
	without introducing a singularity where $\phi=0$.
	If $E>0$, division by $aE^\alpha$ gives
	\begin{equation}\label{eq:G1Esolve}
		E\le
		\left(\frac{bL}{aR}\right)^{1/(1-\alpha)}
		J^{\beta/(1-\alpha)}
		\mu(\A)^{\lambda_0/(1-\alpha)}.
	\end{equation}
	
	\paragraph{Step 3: eliminating the energy by the ordinary cutoff test.}
	By \eqref{eq:basicordinary},
	\[
	I\le b\int_{\A}z^{m-1}\phi^{b-1}|\nabla\phi|.
	\]
	The second factorization is
	\begin{equation}\label{eq:G1factor2}
		z^{m-1}=(u^{-a-1}z^m)^\theta f^\eta.
	\end{equation}
	Here
	\[
	-(a+1)\theta+p\eta=0,\qquad
	m\theta+q\eta
	=\frac{(m-1)[mp+q(a+1)]}{K}=m-1.
	\]
	The second set of H\"older exponents therefore gives
	\begin{align}
		I
		&\le bE^\theta J^\eta
		\left(\int_{\A}
		\phi^{b-1/\kappa_0}|\nabla\phi|^{1/\kappa_0}
		\right)^{\kappa_0}\notag\\
		&\le\frac{bL}{R}E^\theta J^\eta\mu(\A)^{\kappa_0}.
		\label{eq:G1second}
	\end{align}
	If $E=0$, then, since $\theta>0$, this inequality already implies $I=0$.
	It remains to consider $E>0$.
	Substituting \eqref{eq:G1Esolve} into \eqref{eq:G1second}, we obtain
	\begin{align}
		I\le&
		\frac{bL}{R}
		\left(\frac{bL}{aR}\right)^{\theta/(1-\alpha)}
		J^{\,\eta+\theta\beta/(1-\alpha)}
		\mu(\A)^{\,\kappa_0+\theta\lambda_0/(1-\alpha)}.
		\label{eq:G1beforecancel}
	\end{align}
	We check each resulting exponent:
	\begin{equation}\label{eq:G1cancel}
		\begin{aligned}
			\frac{\theta}{1-\alpha}
			&=\frac{p(m-1)}{p+q},\\
			\eta+\frac{\theta\beta}{1-\alpha}
			&=\frac{m-1}{K}
			\left[(a+1)+\frac{p(m-1-a)}{p+q}\right]
			=\frac{m-1}{p+q},\\
			\kappa_0+\frac{\theta\lambda_0}{1-\alpha}
			&=1-\left(\eta+\frac{\theta\beta}{1-\alpha}\right)
			=\frac{\delta}{p+q},\\
			1+\frac{\theta}{1-\alpha}
			&=\frac{mp+q}{p+q}.
		\end{aligned}
	\end{equation}
	The second line uses
	$(a+1)(p+q)+p(m-1-a)=K$.
	The third line can also be verified by taking a common denominator;
	using the fact that each set of weights sums to $1$ makes their
	complementary relation more transparent.
	We conclude that
	\begin{equation}\label{eq:G1local}
		\int_M f\phi^b
		\le C R^{-(mp+q)/(p+q)}
		\mu(\A)^{\delta/(p+q)}
		\left(\int_{\A}f\phi^b\right)^{(m-1)/(p+q)},
	\end{equation}
	where one may take
	\[
	C=bL\left(\frac{bL}{a}\right)^{p(m-1)/(p+q)}.
	\]
	Since $a,b,L$ are fixed, $C$ is independent of $R$.
	More significantly, $a$ has disappeared from all the integral densities.
	
	\paragraph{Step 4: the boundary case $p=0$.}
	In this case $m-1<q<m$.
	We do not formally treat the zero weight $\theta=0$ in
	\eqref{eq:G1weights} as furnishing an ordinary H\"older exponent.
	Instead, set $f=z^q$ directly and use the ordinary test to obtain
	\[
	I\le b\int_{\A}
	(z^q\phi^b)^{(m-1)/q}
	\phi^{b(q-m+1)/q-1}|\nabla\phi|.
	\]
	The conjugate exponents are
	\[
	\frac q{m-1}>1,\qquad
	\frac q{q-m+1}>1,\qquad
	\frac{m-1}{q}+\frac{q-m+1}{q}=1.
	\]
	Choosing $b\ge q/(q-m+1)$, we obtain
	\begin{equation}\label{eq:G1pzero}
		I\le\frac{bL}{R}J^{(m-1)/q}
		\mu(\A)^{(q-m+1)/q}.
	\end{equation}
	This is precisely \eqref{eq:G1local} with $p=0$.
	
	\paragraph{Step 5: summation across scales and conclusion.}
	Take $\phi=\phi_R$, $\A=\A_R$, and $L=1/2$,
	and use $\mu(\A_R)\le V(4R)$.
	Set
	\[
	\gamma=\frac{\delta}{m-1},\qquad
	Q=\frac{mp+q}{m-1}.
	\]
	Since $p+q=(m-1)(1+\gamma)$,
	\eqref{eq:G1local} is exactly the local estimate required in
	Lemma \ref{lem:annular}.
	Moreover, $mp+q=(m-1)p+(p+q)>0$, so $Q>0$.
	Condition \eqref{eq:G1int} is the integral hypothesis of that lemma,
	and consequently $f=0$ almost everywhere.
	If $q>0$, the fact that $u>0$ gives $\nabla u=0$ almost everywhere;
	then $u\in C^1$ and the connectedness of $M$ imply that $u$ is constant.
	If $q=0$, we have $f=u^p>0$, a contradiction.
	If $q<0$, admissibility gives $z>0$ almost everywhere,
	so again $f>0$ almost everywhere, a contradiction.
	This proves Theorem \ref{thm:G1}.
	
	\subsection{The region with a negative power of the function: closure with a fixed weight}
	We now prove Theorem \ref{thm:G3}.
	Let $p<0,m-1<q<m$, and set
	\begin{equation}\label{eq:G3density}
		\delta_3=q-m+1>0,\qquad
		a_0=-\frac{p(m-1)}{\delta_3}>0,\qquad
		F=u^{p-a_0}z^q=u^{pq/\delta_3}z^q.
	\end{equation}
	Since $u$ has a positive lower bound on each compact set,
	we have $F\in L^1_{\loc}(M)$.
	Testing with $u^{-a_0}\phi^b$, \eqref{eq:basicnegative} becomes
	\begin{equation}\label{eq:G3basic}
		\int_M F\phi^b
		+a_0\int_Mu^{-a_0-1}z^m\phi^b
		\le b\int_{\A}u^{-a_0}z^{m-1}\phi^{b-1}|\nabla\phi|.
	\end{equation}
	The choice of $a_0$ is determined by the equation that eliminates
	the extra power of the function:
	\[
	(p-a_0)\frac{m-1}{q}=-a_0
	\quad\Longleftrightarrow\quad
	p(m-1)=-a_0(q-m+1).
	\]
	Therefore
	\begin{equation}\label{eq:G3factor}
		u^{-a_0}z^{m-1}=F^{(m-1)/q}.
	\end{equation}
	Discarding the nonnegative energy term on the left,
	choosing $b\ge q/\delta_3$, and applying the conjugate exponents
	$q/(m-1)$ and $q/\delta_3$, we obtain
	\begin{align}
		\int_M F\phi^b
		&\le b\left(\int_{\A}F\phi^b\right)^{(m-1)/q}
		\left(\int_{\A}
		\phi^{b-q/\delta_3}|\nabla\phi|^{q/\delta_3}
		\right)^{\delta_3/q}\notag\\
		&\le\frac{bL}{R}
		\left(\int_{\A}F\phi^b\right)^{(m-1)/q}
		\mu(\A)^{\delta_3/q}.
		\label{eq:G3local}
	\end{align}
	In Lemma \ref{lem:annular}, take
	\[
	\gamma=\frac{\delta_3}{m-1},\qquad Q=\frac q{m-1}.
	\]
	Since $Q/(1+\gamma)=1$,
	\eqref{eq:G3local} has exactly the exponent structure required by the lemma.
	Condition \eqref{eq:G3int} forces $F=0$, and hence $z=0$ and $u$ is constant.
	
	The sign of $p+q-m+1$ has not been used.
	When $p<0$, the role of $p$ is to determine the fixed test weight $a_0$,
	whereas the volume exponents in the annular integral depend only on $q$ and $m$.
	This also explains why the volume condition in Theorem \ref{thm:G3}
	does not involve $p$.

	\begin{remark}[Relation with the pure gradient inequality]\label{rem:Goffi}
		Let $d=q-m+1>0$ and $p<0$. Choose $H'(s)=s^{p/d}$, so that
		$H(s)=s^{1+p/d}/(1+p/d)$ if $1+p/d\ne0$, and $H(s)=\log s$
		otherwise. The function $H$ is increasing and concave on $(0,\infty)$.
		The weak chain rule, justified by positivity on compact sets, gives
		for $v=H(u)$
		\[
		-\Delta_m v
		\ge H'(u)^{m-1}u^p|\nabla u|^q
		=H'(u)^q|\nabla u|^q=|\nabla v|^q.
		\]
		The omitted term is
		$-(m-1)H'(u)^{m-2}H''(u)|\nabla u|^m\ge0$.
		Although $v$ need not be positive, this is allowed in
		\cite[Theorem 1.2]{Goffi2023}. Its area criterion and the
		volume-to-area implication in \cite[Remark 4.5]{Goffi2023} apply with
		$\gamma=(q-m+1)/(m-1)$ and yield \eqref{eq:G3int}.
		When $p=0$, the same connection applies directly to
		Theorem~\ref{thm:G1}. Thus these specializations are recoveries of
		known pure gradient results; the fixed-density proof makes the
		volume summation common to the mixed and pure gradient problems.
	\end{remark}
	
	\subsection{A complete first-order testing proof of the capacity condition}
	In this subsection we prove Proposition \ref{prop:G2}.
	Suppose that $u>0$ and $-\Delta_m u\ge0$.
	Testing with $u^{1-m}\phi^m$ and writing $w=\log u$, we obtain
	\begin{equation}\label{eq:capacityenergy0}
		(m-1)\int_M|\nabla w|^m\phi^m
		\le m\int_M|\nabla w|^{m-1}\phi^{m-1}|\nabla\phi|.
	\end{equation}
	Denote the energy on the left, without its coefficient, by $E_\phi$.
	The conjugate exponents $m/(m-1)$ and $m$ give
	\[
	(m-1)E_\phi
	\le m E_\phi^{(m-1)/m}
	\left(\int_M|\nabla\phi|^m\right)^{1/m}.
	\]
	If $E_\phi=0$, the desired estimate is immediate.
	Otherwise, divide by its $(m-1)/m$ power and then raise to the power $m$ to get
	\begin{equation}\label{eq:capacityenergy}
		E_\phi\le\left(\frac m{m-1}\right)^m
		\int_M|\nabla\phi|^m.
	\end{equation}
	
	We next give a detailed construction of cutoffs whose energy on the right
	tends to zero. Fix $R_0>0$ and set
	\begin{equation}\label{eq:capacityweights}
		R_j=2^jR_0,\qquad
		b_j=\frac{V(2R_j)}{R_j^m},\qquad
		w_j=b_j^{-1/(m-1)},\qquad
		s=\frac m{m-1}.
	\end{equation}
	Let
	\[
	J_j=\int_{R_j}^{2R_j}
	\left(\frac R{V(R)}\right)^{1/(m-1)}\dd R,
	\qquad c_m=\frac{2^s-1}{s}.
	\]
	The monotonicity of volume gives
	\[
	\frac1{V(2R_j)^{1/(m-1)}}
	\int_{R_j}^{2R_j}R^{1/(m-1)}\dd R
	\le J_j
	\le
	\frac1{V(R_j)^{1/(m-1)}}
	\int_{R_j}^{2R_j}R^{1/(m-1)}\dd R.
	\]
	Evaluating the one-dimensional integral and using
	$V(R_j)=V(2R_{j-1})$, we obtain
	\begin{equation}\label{eq:capacitycompare}
		c_m w_j\le J_j\le c_m 2^s w_{j-1}\qquad(j\ge1).
	\end{equation}
	Thus \eqref{eq:G2int} is equivalent to $\sum_jw_j=\infty$.
	This step requires no volume doubling assumption.
	
	Fix the index $j_0$ of the inner ball.
	For $N\ge j_0$, set
	\[
	S_N=\sum_{j=j_0}^Nw_j,\qquad d_j=\frac{w_j}{S_N}.
	\]
	On $B_{R_{j_0}}$, set $\phi_N=1$;
	on each annulus $R_j<r<2R_j$, let it decrease linearly by $d_j$;
	and for $r\ge2R_N$, set $\phi_N=0$.
	Since $\sum_{j=j_0}^Nd_j=1$, this defines a continuous, compactly supported
	Lipschitz cutoff.
	On each annulus, $|\nabla\phi_N|\le d_j/R_j$, and therefore
	\begin{align}
		\int_M|\nabla\phi_N|^m
		&\le\sum_{j=j_0}^N\frac{V(2R_j)}{R_j^m}d_j^m
		=S_N^{-m}\sum_{j=j_0}^N b_jw_j^m\notag\\
		&=S_N^{-m}\sum_{j=j_0}^Nw_j
		=S_N^{1-m}\longrightarrow0.
		\label{eq:capacityzero}
	\end{align}
	The reciprocal-power weights satisfy exactly $b_jw_j^m=w_j$,
	which explains the choice of the decreases above.
	Substituting \eqref{eq:capacityzero} into \eqref{eq:capacityenergy}
	and using $\phi_N=1$ on the inner ball, we obtain
	\[
	\int_{B_{R_{j_0}}}|\nabla\log u|^m=0.
	\]
	The inner ball can be enlarged arbitrarily.
	Hence $u$ is constant, proving the proposition.
	
	\subsection{Iterated logarithms, general slowly growing factors, and specific parameters}
	We first prove Corollary \ref{cor:logs}.
	Choose $R_0$ sufficiently large that all the functions $L_j(R)$ under
	consideration are positive for $R\ge R_0$.
	By \eqref{eq:logs},
	\begin{equation}\label{eq:loglower}
		\left(\frac{R^{D_i}}{V(R)}\right)^{1/B_i}\frac1R
		\ge \frac{C^{-1/B_i}}
		{R L_1(R)\cdots L_{N-1}(R)L_N(R)^{\beta/B_i}}.
	\end{equation}
	Repeated application of the chain rule yields
	\begin{equation}\label{eq:logderivative}
		L_N'(R)=\frac1{R L_1(R)\cdots L_{N-1}(R)}.
	\end{equation}
	For $N=1$, the empty product on the right is understood to be $1$.
	With $t=L_N(R)$, the comparison integral is exactly
	\[
	C^{-1/B_i}\int_{L_N(R_0)}^\infty t^{-\beta/B_i}\dd t.
	\]
	If $\beta<B_i$, an antiderivative is
	$t^{1-\beta/B_i}/(1-\beta/B_i)$, which tends to infinity.
	If $\beta=B_i$, an antiderivative is $\log t$, which also tends to infinity.
	Thus \eqref{eq:unified} diverges, proving the corollary.
	
	The same method is not restricted to logarithms.
	If $W(R)>0$ is measurable and
	\[
	V(R)\le C R^{D_i}W(R)^{B_i},
	\qquad \int^\infty\frac{\dd R}{R W(R)}=\infty,
	\]
	then the corresponding Liouville property still holds.
	No regular variation assumption is imposed on $W$, and it is not differentiated.
	Iterated logarithms are simply a particularly explicit class of examples
	of this general observation.
	
	If the volume bound has the form
	\[
	V(R)\le C R^{D_i}\prod_{j=1}^N L_j(R)^{b_j},
	\]
	the comparison integral is
	\[
	\int^\infty
	\frac{\dd R}{R\prod_{j=1}^N L_j(R)^{b_j/B_i}}.
	\]
	Following the order $L_1,L_2,\ldots,L_N$, locate the first index
	for which $b_j\ne B_i$.
	After making the changes of variables corresponding to the preceding levels,
	the integral has the form $\int^\infty t^{-b_j/B_i}$
	times finitely many iterated logarithmic factors.
	For every $\varepsilon>0$, any fixed finite product of logarithmic factors,
	and its reciprocal, lie between $t^{-\varepsilon}$ and $t^\varepsilon$
	up to positive multiplicative constants.
	The comparison integral therefore diverges if the first unequal exponent
	is smaller than $B_i$, and converges if it is larger than $B_i$.
	It also diverges when all exponents equal $B_i$.
	Convergence of this comparison integral only means that the given volume
	upper bound does not guarantee divergence; it does not allow one to infer
	convergence of the actual volume integral from that upper bound.
	
	\begin{example}[Semilinear inequalities]\label{ex:semilinear}
		Take $m=2,q=0,p=\sigma>1$. Then
		\[
		D_1=\frac{2\sigma}{\sigma-1},\qquad
		B_1=\frac1{\sigma-1}.
		\]
		The condition in Theorem \ref{thm:G1} is
		\[
		\int^\infty\frac{R^{2\sigma-1}}{V(R)^{\sigma-1}}\dd R=\infty.
		\]
		In particular, when $\sigma=2$,
		\[
		V(R)\le C R^4\log R\log\log R\log\log\log R
		\]
		already suffices to exclude positive solutions,
		whereas \eqref{eq:GSvolume} displays only the bound $R^4\log R$.
		The resulting comparison integral is
		\[
		\int^\infty\frac{\dd R}{R\log R\log\log R\log\log\log R},
		\]
		whose divergence follows directly from three successive logarithmic
		changes of variables.
	\end{example}
	
	\begin{example}[Negative powers of the function and of the gradient]
		For $m=2,p=-1,q=3/2$, the parameters lie in $G_3$.
		We have $D_3=3,B_3=2$, the fixed negative-power parameter is $a_0=2$,
		and the density is $F=u^{-3}|\nabla u|^{3/2}$.
		Thus $V(R)\lesssim R^3(\log R)^2(\log\log R)^2$ is sufficient.
		For $m=2,p=3,q=-1$, the parameters lie in $G_1$,
		and $D_1=5,B_1=1$.
		Within the original admissible solution class for negative gradient powers,
		$V(R)\lesssim R^5\log R\log\log R$ excludes positive solutions.
		The latter example shows that a negative gradient exponent does not obstruct
		the summation of annular masses for a fixed density,
		but the local integrability convention remains essential.
	\end{example}
	
	\begin{example}[Verification for Euclidean volume]
		If $M=\R^n$, then $V(R)=\omega_nR^n$.
		For \eqref{eq:unified}, the comparison integral becomes
		\[
		C\int^\infty R^{(D_i-n)/B_i-1}\dd R,
		\]
		which diverges if and only if $n\le D_i$.
		For $m=2,q=0,p>1$, the condition
		$n\le2p/(p-1)$ is equivalent, when $n>2$, to
		$p\le n/(n-2)$, recovering the familiar threshold for the semilinear inequality.
		This is the threshold for an inequality and should not be confused with
		other critical exponents for the corresponding equation.
	\end{example}

	\section{Subhomogeneous reactions and zero lower growth}\label{sec:subhom}
	This section proves Theorem \ref{thm:G46}.
	The parameters common to the two regions are
	\begin{equation}\label{eq:G46parameters}
		h=m-1-p-q>0,\qquad k=m-q\ge1.
	\end{equation}
	The case $k=1$ corresponds to $G_4$, whereas $k>1$ corresponds to $G_6$.
	The negative-power parameter $a$ will tend to infinity, so factors depending on $a$
	must not be absorbed into unspecified constants.
	
	\paragraph{Step 1: normalization on a fixed compact set.}
	Suppose, to the contrary, that there exists a positive, nonconstant admissible weak solution $u$.
	Since $u\in C^1(M)$ and $M$ is connected,
	there is a point at which $|\nabla u|>0$.
	By continuity, we can choose a compact set $E$ of positive volume
	such that $|\nabla u|\ge c_E'>0$ on $E$.
	Choose $K_0>0$ so that $u\le K_0$ on $E$, and set
	\begin{equation}\label{eq:normalize}
		v=u/K_0,\qquad \ell_0=K_0^{-h}.
	\end{equation}
	The homogeneity of the $m$-Laplace operator gives
	\[
	-\Delta_m v
	=K_0^{-(m-1)}(-\Delta_m u)
	\ge K_0^{p+q-m+1}v^p|\nabla v|^q
	=\ell_0 v^p|\nabla v|^q.
	\]
	On $E$, we have $0<v\le1$, and $|\nabla v|$ has a positive lower bound.
	This fixed set will provide a positive mass independent of both $a$ and $R$.
	
	\paragraph{Step 2: the large parameter and conjugate exponents.}
	Choose $a>\max\{m-1,p+1\}$ sufficiently large, and write
	\begin{equation}\label{eq:largeindices}
		A_a=ka-mp-q,\qquad
		T=\frac{A_a}{h},\qquad
		\rho=\frac{a-p-q}{h},\qquad
		t=\frac{A_a}{a-p-q}=\frac T\rho.
	\end{equation}
	Since
	\[
	a-p-q=(a-m+1)+h>h,
	\]
	we have $\rho>1$. Moreover,
	\begin{equation}\label{eq:tminusone}
		t-1=\frac{(k-1)a-p(m-1)}{a-p-q}.
	\end{equation}
	In $G_6$, we have $k>1$, so this expression is positive for sufficiently large $a$.
	In $G_4$, we have $k=1,p<0$, so it is positive for every $a$ under consideration.
	Thus $t>1$, and we may use the conjugate exponents
	\[
	s=\frac t{t-1},\qquad t,\qquad \frac1s+\frac1t=1.
	\]
	We also define
	\begin{equation}\label{eq:vartheta}
		\vartheta=\frac{a-m+1}{a-p-q}\in(0,1),
		\qquad 1-\vartheta=\frac h{a-p-q}=\frac1\rho.
	\end{equation}
	
	\paragraph{Step 3: the full Young inequality calculation.}
	Take $b\ge T$ and $b\ge1$, test with $v^{-a}\phi^b$, and denote
	\[
	I_a=\int_M v^{p-a}|\nabla v|^q\phi^b,\qquad
	E_a=\int_M v^{-a-1}|\nabla v|^m\phi^b.
	\]
	The basic estimate is
	\begin{equation}\label{eq:large-basic}
		\ell_0 I_a+aE_a
		\le b\int_Mv^{-a}|\nabla v|^{m-1}
		\phi^{b-1}|\nabla\phi|.
	\end{equation}
	To identify every exponent explicitly, put $z=|\nabla v|$ and factor the
	integrand on the right pointwise by setting
	\begin{align*}
		X&=\left(\frac a2 v^{-a-1}z^m\phi^b\right)^{1/s},\\
		Y&=b\left(\frac2a\right)^{1/s}
		v^{-a+(a+1)/s}z^{m-1-m/s}
		\phi^{b-1-b/s}|\nabla\phi|.
	\end{align*}
	Then $XY=bv^{-a}z^{m-1}\phi^{b-1}|\nabla\phi|$.
	Using $XY\le X^s/s+Y^t/t\le X^s+Y^t$, we obtain
	\[
	XY\le
	\frac a2 v^{-a-1}z^m\phi^b+
	2^{t-1}b^ta^{1-t}
	v^{-a+t-1}z^{m-t}\phi^{b-t}|\nabla\phi|^t.
	\]
	The individual exponents follow from
	\begin{align*}
		\frac ts&=t-1,\\
		-at+(a+1)\frac ts&=-a+t-1,\\
		(m-1)t-m\frac ts&=m-t,\\
		(b-1)t-b\frac ts&=b-t.
	\end{align*}
	Substitution into \eqref{eq:large-basic}, followed by absorption of half of the
	energy term, yields
	\begin{equation}\label{eq:large-young}
		\ell_0 I_a+\frac a2 E_a
		\le 2^{t-1}b^ta^{1-t}
		\int_Mv^{-a+t-1}z^{m-t}\phi^{b-t}|\nabla\phi|^t.
	\end{equation}
	When $q<0$, the exponent $m-t$ may be negative.
	The calculation is performed on the full-measure set where $z>0$.
	The H\"older factorization below also ensures that the integral on the right is finite,
	so no global positive lower bound on the gradient is required.
	When $q\ge0$, the corresponding factors at zero-gradient points are interpreted
	by continuity; when $q=0$, the zeroth power is still understood to be $1$.
	
	\paragraph{Step 4: absorption of the remaining integral through the source term.}
	A direct calculation from \eqref{eq:largeindices} gives
	\begin{align}
		-a+t-1
		&=\frac{(p-a)(a-m+1)}{a-p-q}
		=(p-a)\vartheta,\notag\\
		m-t
		&=\frac{q(a-m+1)}{a-p-q}
		=q\vartheta.
		\label{eq:largepowers}
	\end{align}
	Consequently,
	\[
	v^{-a+t-1}z^{m-t}
	=(v^{p-a}z^q)^\vartheta.
	\]
	Including the cutoff powers, we have
	\[
	v^{-a+t-1}z^{m-t}\phi^{b-t}|\nabla\phi|^t
	=(v^{p-a}z^q\phi^b)^\vartheta
	\phi^{b(1-\vartheta)-t}|\nabla\phi|^t.
	\]
	The conjugate H\"older exponents are $1/\vartheta$ and $\rho=1/(1-\vartheta)$.
	Since $t\rho=T$, it follows that
	\begin{align}
		&\int_Mv^{-a+t-1}z^{m-t}\phi^{b-t}|\nabla\phi|^t\notag\\
		&\qquad\le I_a^\vartheta
		\left(\int_M\phi^{b-T}|\nabla\phi|^T\right)^{1/\rho}
		\le I_a^\vartheta
		\left(\int_M|\nabla\phi|^T\right)^{1/\rho}.
		\label{eq:largeholder}
	\end{align}
	The second inequality uses precisely the condition $b\ge T$.
	If $I_a=0$, the desired upper bound is immediate.
	Otherwise, inserting \eqref{eq:largeholder} into \eqref{eq:large-young},
	discarding the energy term, and dividing by $I_a^\vartheta$, we obtain
	\[
	\ell_0 I_a^{1/\rho}
	\le 2^{t-1}b^ta^{1-t}
	\left(\int_M|\nabla\phi|^T\right)^{1/\rho}.
	\]
	Raising both sides to the power $\rho$ gives an estimate with no hidden parameter dependence:
	\begin{equation}\label{eq:largeexact}
		I_a\le
		\ell_0^{-\rho}
		2^{(t-1)\rho}b^Ta^{(1-t)\rho}
		\int_M|\nabla\phi|^T.
	\end{equation}
	
	\paragraph{Step 5: the cutoff and the optimal parameter scale.}
	In this subsection, take $\phi=1$ on $B_R$ and let it decrease linearly to zero
	on $R<r<2R$.
	Then $|\nabla\phi|\le R^{-1}$, and
	\[
	\int_M|\nabla\phi|^T\le R^{-T}V(2R).
	\]
	Fix $B>k/h$ and take $b=Ba$.
	For all sufficiently large $a$, we indeed have $b\ge T$ and $b\ge1$.
	We now simplify every factor in \eqref{eq:largeexact}:
	\begin{align}
		\rho&=\frac{a-p-q}{h},&
		T&=\frac{ka-mp-q}{h},\notag\\
		(t-1)\rho&=\frac{(k-1)a-p(m-1)}h,&
		(1-t)\rho&=\frac{p(m-1)-(k-1)a}h,\notag\\
		T+(1-t)\rho&=\rho,&
		\ell_0^{-\rho}&=K_0^{a-p-q}.
		\label{eq:largeconstantpowers}
	\end{align}
	Since $b^T=B^Ta^T$, the pure powers of $a$ multiply to exactly $a^\rho$.
	We therefore obtain
	\begin{equation}\label{eq:large-beforechoice}
		\int_{B_R}v^{p-a}z^q
		\le C_0 V(2R)a^{-(p+q)/h}R^{(mp+q)/h}
		\left(\frac{C_1K_0 a^{1/h}}{R^{k/h}}\right)^a,
	\end{equation}
	with the explicit constants
	\begin{equation}\label{eq:largeconstants}
		C_0=K_0^{-p-q}2^{-p(m-1)/h}B^{-(mp+q)/h},
		\qquad C_1=2^{(k-1)/h}B^{k/h}.
	\end{equation}
	They depend only on the fixed quantities $m,p,q,B,K_0$, and not on $a,R$.
	
	Choose a fixed, sufficiently small $\varepsilon>0$ such that
	\[
	C_1K_0\varepsilon^{1/h}\le e^{-2},
	\]
	and set
	\begin{equation}\label{eq:optimal-a}
		a=\varepsilon R^k.
	\end{equation}
	The parameter $a$ may be real; the test does not require it to be an integer.
	For sufficiently large $R$, all the preceding restrictions on $a$ hold.
	After substitution, the power of the radius outside the parentheses is computed exactly as
	\begin{align}
		\frac{mp+q-k(p+q)}h
		&=\frac{mp+q-(m-q)(p+q)}h\notag\\
		&=\frac{q(p+q-m+1)}h=-q.
		\label{eq:radiuscancel}
	\end{align}
	Thus \eqref{eq:large-beforechoice} reduces to
	\begin{equation}\label{eq:large-final}
		\int_{B_R}v^{p-a}z^q
		\le C_0\varepsilon^{-(p+q)/h}
		V(2R)R^{-q}\exp(-2\varepsilon R^k).
	\end{equation}
	This calculation identifies $R^{m-q}$ as the appropriate large-parameter scale,
	rather than merely a scale proportional to $R$.
	
	\paragraph{Step 6: a fixed positive mass and the lower-limit condition.}
	For sufficiently large $R$, we have $E\subset B_R$.
	Since $v\le1$ on $E$ and $a\ge p+1$,
	\[
	v^{p-a}\ge v^{-1}\quad\text{on }E.
	\]
	Hence
	\begin{equation}\label{eq:fixedmass}
		0<c_E:=\int_Ev^{-1}z^q
		\le\int_{B_R}v^{p-a}z^q.
	\end{equation}
	The constant $c_E$ is finite and strictly positive:
	$E$ is compact, and both $v$ and $z$ have positive lower bounds on $E$.
	Combining this with \eqref{eq:large-final} gives
	\begin{equation}\label{eq:contradictionbound}
		c_E\le C_0\varepsilon^{-(p+q)/h}
		V(2R)R^{-q}\exp(-2\varepsilon R^k).
	\end{equation}
	The condition $h_k(M)=0$ provides a sequence $S_j\to\infty$
	such that $\log V(S_j)/S_j^k\to0$.
	Set $R_j=S_j/2$. Then
	$\log V(2R_j)=o(R_j^k)$.
	The logarithm of the right-hand side of \eqref{eq:contradictionbound},
	apart from a fixed additive constant, is
	\[
	\log V(2R_j)-q\log R_j-2\varepsilon R_j^k
	\longrightarrow-\infty.
	\]
	The right-hand side therefore tends to zero, contradicting $c_E>0$.
	This excludes nonconstant positive solutions.
	
	If $q>0$, the remaining positive constant solutions are exactly those permitted by
	the Liouville conclusion.
	If $q=0$, a positive constant $u=c>0$ gives zero on the left-hand side
	and $c^p>0$ on the right, and thus does not satisfy the weak inequality.
	If $q<0$, positive constants do not belong to the admissible solution class.
	Consequently, no positive admissible solution exists when $q\le0$ either.
	This proves Theorem \ref{thm:G46}.
	
	\begin{remark}\label{rem:criticalG6}
		The permissible size of $\varepsilon$ in \eqref{eq:optimal-a}
		depends on the normalization constant $K_0$ obtained on the fixed compact set.
		Thus $\varepsilon$ cannot be increased arbitrarily to absorb an arbitrary
		prescribed positive coefficient $C$ in $\log V(R)\sim CR^k$.
		This is why the proof yields the condition of zero lower growth rate
		without directly covering all critical growth bounds of the form $\exp(CR^k)$.
		This is an actual restriction of the estimate, rather than an omitted
		constant-absorption step.
	\end{remark}
	
	\section{The homogeneous line and the exact entropy threshold}\label{sec:entropy}
	This section proves the Liouville property in Theorem \ref{thm:G5}.
	Assume $p+q=m-1$ and $p\ge0$, and put
	$w=\log u$ and $z=|\nabla w|$.
	For a nonnegative compactly supported Lipschitz function $\psi$,
	use $u^{1-m}\psi$ as a test function in \eqref{eq:weak}.
	Expanding its derivative gives
	\begin{align*}
		&\int_M|\nabla u|^{m-2}
		\ip{\nabla u}{u^{1-m}\nabla\psi}
		+(1-m)\int_Mu^{-m}|\nabla u|^m\psi\\
		&\hspace{35mm}\ge
		\int_Mu^{p+1-m}|\nabla u|^q\psi .
	\end{align*}
	Since $\nabla u=u\nabla w$, we have
	\[
	u^{1-m}|\nabla u|^{m-2}\nabla u
	=|\nabla w|^{m-2}\nabla w,
	\quad
	u^{-m}|\nabla u|^m=z^m,
	\]
	and
	$u^{p+1-m}|\nabla u|^q=u^{p+q-m+1}z^q=z^q$.
	Therefore
	\begin{equation}\label{eq:logweak}
		\int_M|\nabla w|^{m-2}\ip{\nabla w}{\nabla\psi}
		\ge \int_M\bigl[(m-1)z^m+z^q\bigr]\psi .
	\end{equation}
	This is the weak form of
	$-\Delta_m w\ge(m-1)|\nabla w|^m+|\nabla w|^q$.
	
	Write $F(t)=(m-1)t^m+t^q$.
	For $t>0$, the identity $q=m-1-p$ gives
	\begin{equation}\label{eq:Fquotient}
		\frac{F(t)}{t^{m-1}}=(m-1)t+t^{-p}.
	\end{equation}
	If $p>0$, the right-hand side tends to infinity at both endpoints,
	and its derivative is $(m-1)-pt^{-p-1}$.
	Its unique minimizer is
	\[
	t_*=\left(\frac p{m-1}\right)^{1/(p+1)}.
	\]
	Using $(m-1)t_*=pt_*^{-p}$, we find
	\[
	\inf_{t>0}\bigl[(m-1)t+t^{-p}\bigr]
	=(p+1)t_*^{-p}
	=(p+1)\left(\frac{m-1}{p}\right)^{p/(p+1)}.
	\]
	If $p=0$, the right-hand side is $(m-1)t+1$, whose infimum is $1$.
	Consequently, for all admissible values of $t$,
	\begin{equation}\label{eq:Fbounds}
		F(t)\ge c_*(m,p)t^{m-1},\qquad F(t)\ge(m-1)t^m.
	\end{equation}
	When $q<0$, these inequalities are used on the full-measure set where $z>0$;
	when $q=0$, we have $F(0)=1$; and when $q>0$, we have $F(0)=0$.
	
	Suppose that $h_1(M)<c_*(m,p)$.
	Choose
	\[
	h_1(M)<\sigma<c_*(m,p),\qquad
	d=1-\frac{\sigma}{c_*(m,p)}>0.
	\]
	Let $\eta_R=1$ on $B_R$, let it decrease linearly to zero on $R<r<R+1$,
	and set it equal to zero outside this region.
	Then $|\nabla\eta_R|\le1$.
	Test \eqref{eq:logweak} with
	\begin{equation}\label{eq:exp-test}
		\psi=e^{-\sigma r}\eta_R^m
	\end{equation}
	This function is compactly supported, so no global weighted integrability
	is being assumed.
	Using $|\nabla r|\le1$, we obtain
	\begin{align}
		\int_M F(z)e^{-\sigma r}\eta_R^m
		\le{}&\sigma\int_Mz^{m-1}e^{-\sigma r}\eta_R^m\notag\\
		&+m\int_M z^{m-1}e^{-\sigma r}
		\eta_R^{m-1}|\nabla\eta_R|.
		\label{eq:exp-first}
	\end{align}
	The first inequality in \eqref{eq:Fbounds} allows the first term to be absorbed, giving
	\begin{equation}\label{eq:exp-absorb}
		d\int_MF(z)e^{-\sigma r}\eta_R^m
		\le m\int_M(z\eta_R)^{m-1}|\nabla\eta_R|e^{-\sigma r}.
	\end{equation}
	
	To display the constant explicitly, we use the following form of Young's inequality:
	\begin{equation}\label{eq:exactyoungentropy}
		m x^{m-1}y
		\le\frac d2(m-1)x^m+\left(\frac2d\right)^{m-1}y^m,
		\qquad x,y\ge0.
	\end{equation}
	It follows by applying Young's inequality with conjugate exponents
	$m/(m-1)$ and $m$ to suitably scaled factors.
	Alternatively, maximize the left-hand side minus the first term on the right
	as a function of $x$.
	The maximizer satisfies $x=2y/d$, and substitution gives exactly the
	second term on the right.
	Taking $x=z\eta_R,y=|\nabla\eta_R|$, and using
	$F(z)\ge(m-1)z^m$, we deduce from \eqref{eq:exp-absorb} that
	\[
	\frac d2\int_MF(z)e^{-\sigma r}\eta_R^m
	\le\left(\frac2d\right)^{m-1}
	\int_M e^{-\sigma r}|\nabla\eta_R|^m.
	\]
	Hence
	\begin{equation}\label{eq:entropyestimate}
		\int_MF(z)e^{-\sigma r}\eta_R^m
		\le\left(\frac2d\right)^m
		e^{-\sigma R}V(R+1).
	\end{equation}
	The constant may diverge as $\sigma$ approaches the threshold,
	but it is finite for the fixed choice $\sigma<c_*$.
	
	Since $h_1(M)<\sigma$, there exist $\varepsilon>0$ and $S_j\to\infty$
	such that $V(S_j)\le\exp((\sigma-\varepsilon)S_j)$.
	Take $R_j=S_j-1$ in \eqref{eq:entropyestimate}.
	The right-hand side is at most
	\[
	\left(\frac2d\right)^m e^\sigma e^{-\varepsilon S_j}
	\longrightarrow0.
	\]
	For any fixed ball $B_L$, we have $\eta_{R_j}=1$ there for all sufficiently large $j$.
	Thus $\int_{B_L}F(z)e^{-\sigma r}=0$, and consequently $F(z)=0$ almost everywhere.
	It follows from \eqref{eq:Fbounds} that $z=0$.
	If $q>0$, then $u$ is constant.
	If $q=0$, the inequality $F(z)\ge1$ gives a contradiction.
	If $q<0$, admissibility likewise gives a contradiction.
	This proves the nonexistence assertion.
	
	\subsection{Verification for a constant source coefficient}
	We now prove Proposition \ref{prop:coefficient} and explain the role of
	the coefficient in each parameter region.
	Suppose that $-\Delta_m u\ge\lambda u^p|\nabla u|^q$.
	In the first negative-power test in $G_1$, discarding the source term
	still gives the same energy estimate as \eqref{eq:G1energy}.
	The ordinary test has $\lambda I$ on the left.
	Thus the constant in \eqref{eq:G1local} acquires only a factor $\lambda^{-1}$.
	The annular lemma permits any fixed positive constant in the local estimate,
	so neither the volume-integral exponents nor the divergence condition changes.
	The separate proof for $p=0$ likewise introduces only $\lambda^{-1}$.
	
	In $G_3$, the source term in \eqref{eq:G3basic} becomes
	$\lambda\int F\phi^b$, while the energy term is unchanged.
	Discarding the energy term changes only the fixed constant in \eqref{eq:G3local}.
	In $G_2$, we simply discard the source term,
	and Proposition \ref{prop:G2} does not involve $\lambda$ at all.
	
	In $G_4,G_6$, the normalized coefficient is
	$\ell_0=\lambda K_0^{-h}$, so the right-hand side of
	\eqref{eq:largeexact} acquires a factor $\lambda^{-\rho}$.
	Since $\rho=(a-p-q)/h$,
	\[
	\lambda^{-\rho}
	=\lambda^{(p+q)/h}(\lambda^{-1/h})^a.
	\]
	The constant outside the parentheses in \eqref{eq:large-beforechoice}
	therefore acquires the factor $\lambda^{(p+q)/h}$,
	and the fixed constant inside is multiplied by $\lambda^{-1/h}$.
	Both factors are independent of the radius and the negative-power parameter.
	We can still choose a sufficiently small fixed $\varepsilon>0$,
	so the condition of zero lower growth rate remains unchanged.
	
	In $G_5$, the logarithmic transformation yields
	\[
	-\Delta_m w\ge(m-1)|\nabla w|^m
	+\lambda|\nabla w|^q.
	\]
	The required optimal coefficient becomes
	\[
	\inf_{t>0}\bigl[(m-1)t+\lambda t^{-p}\bigr].
	\]
	For $p>0$, its minimizer is
	$t_\lambda=(\lambda p/(m-1))^{1/(p+1)}$,
	and $(m-1)t_\lambda=\lambda p t_\lambda^{-p}$.
	The infimum is therefore
	\[
	\lambda(p+1)t_\lambda^{-p}
	=(p+1)\left(\frac{m-1}{p}\right)^{p/(p+1)}
	\lambda^{1/(p+1)}.
	\]
	For $p=0$, the infimum is $\lambda$.
	Replacing $c_*$ in the preceding subsection by this constant gives
	\eqref{eq:cstar-lambda} and proves the proposition.
	
	\section{Sharpness and global model constructions}\label{sec:models}
	\subsection{Radial flux and global gluing}
	To prove Theorem~\ref{thm:logsharp}, the endpoint assertion in
	Theorem~\ref{thm:G5}, and Proposition~\ref{prop:models}, we first establish the common steps
	of the radial constructions.
	The metrics constructed below are smooth, whereas the functions need only
	have the $C^1$ regularity required in Definition \ref{def:solution}.
	
	Take $n\ge2$ and consider, in polar coordinates on $\R^n$,
	\begin{equation}\label{eq:modelmetric}
		g=\dd r^2+\psi(r)^2 g_{\mathbb S^{n-1}},
		\qquad \psi(r)>0\quad(r>0).
	\end{equation}
	We require $\psi(r)=r$ near the origin, so that the metric extends smoothly across the pole.
	The spherical area density is
	\[
	A(r)=\omega_{n-1}\psi(r)^{n-1},\qquad
	V(R)=\int_0^R A(r)\dd r,
	\]
	where $\omega_{n-1}$ is the area of the unit sphere.
	The radial length element is $\dd r$, so every curve escaping to infinity has infinite length;
	this ensures completeness of the model.
	The distance from the pole to $(r,\theta)$ is exactly $r$:
	the radial segment gives an upper bound of $r$,
	while any connecting curve has length at least the total change $r$
	in its radial coordinate.
	
	For a radial function $u=u(r)$, the radial divergence formula gives
	\begin{equation}\label{eq:radialoperator}
		\Delta_m u=\frac1{A(r)}
		\left(A(r)|u'|^{m-2}u'\right)'.
	\end{equation}
	Set $f(r)=-u'(r)>0$. Then
	\begin{equation}\label{eq:radialflux}
		-\Delta_m u
		=(f^{m-1})'+\frac{A'}A f^{m-1}
		=f^{m-1}\left((m-1)\frac{f'}f+\frac{A'}A\right).
	\end{equation}
	Dividing the desired inequality by the positive quantity $f^{m-1}$,
	we may equivalently write it as
	\begin{equation}\label{eq:areainequality}
		\frac{A'}A\ge
		u^pf^{-(m-1-q)}-(m-1)\frac{f'}f.
	\end{equation}
	This reduces the choice of metric to a one-dimensional inequality.
	
	\begin{lemma}[Global gluing with prescribed growth]\label{lem:glue}
		Suppose that $f\in C^\infty((0,\infty))$, $f>0$,
		that $f(r)=cr^a$ near the origin for some $c,a>0$,
		and that $\int_1^\infty f(r)\dd r<\infty$.
		Given $L\ge0$, define
		\[
		u(r)=L+\int_r^\infty f(t)\dd t.
		\]
		Assume that there exist $0<r_0<r_1$ and a positive smooth area function
		$A_\infty$ defined near infinity such that \eqref{eq:areainequality}
		is satisfied by $A_0(r)=\omega_{n-1}r^{n-1}$ on $0<r<r_0$
		and by $A_\infty(r)$ on $r>r_1$.
		If
		\begin{equation}\label{eq:localintegrabilitymodel}
			n+aq>0,
		\end{equation}
		then there exists a smooth complete model metric with $A=A_0$ near the origin
		and $A=C A_\infty$ for sufficiently large $r$,
		such that the function $u$ above extends to a globally positive
		$C^1$ admissible weak solution.
	\end{lemma}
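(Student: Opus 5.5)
The plan is to build the global area function by interpolating the logarithmic derivatives of $A_0$ and $A_\infty$ in the window $[r_0,r_1]$, using the fact that \eqref{eq:areainequality} is a one-sided lower bound on $(\log A)'$. Concretely, set
\[
G(r)=u(r)^pf(r)^{-(m-1-q)}-(m-1)\frac{f'(r)}{f(r)},
\]
which is smooth on $(0,\infty)$ because $f>0$ is smooth and $u\ge L+\int_r^\infty f>0$. On $(0,r_0)$ we have $(\log A_0)'=(n-1)/r\ge G$, and on $(r_1,\infty)$ we have $(\log A_\infty)'\ge G$. Choose a smooth $\Lambda$ on $(0,\infty)$ with the following properties: $\Lambda=(n-1)/r$ on $(0,r_0/2]$, $\Lambda=(\log A_\infty)'$ on $[2r_1,\infty)$, and $\Lambda\ge G$ everywhere. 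This can be done with a partition of unity $\chi_0+\chi_1+\chi_\infty=1$, where $\chi_1$ is supported in $[r_0/2,2r_1]$. Take
\[
\Lambda=\chi_0\tfrac{n-1}r+\chi_1\bigl(\max_{[r_0/2,2r_1]}|G|+1\bigr)+\chi_\infty(\log A_\infty)'.
\]
Each term dominates $\chi_\bullet G$ on its support, so $\Lambda\ge G$. Then define
\[
A(r)=\omega_{n-1}r^{n-1}\exp\int_0^r\bigl(\Lambda-\tfrac{n-1}{s}\bigr)\dd s
\]
and $\psi=(A/\omega_{n-1})^{1/(n-1)}$. The integrand vanishes near $0$, so $\psi=r$ near the origin and the metric \eqref{eq:modelmetric} is smooth at the pole. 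For large $r$, $(\log A)'=(\log A_\infty)'$, so $A=CA_\infty$. Completeness is the general remark already made for such models.

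Next I verify that $u$ is a solution. Away from the pole, $u$ is smooth with $u'=-f<0$, and by \eqref{eq:radialflux} together with $(\log A)'\ge G$ it satisfies $-\Delta_m u\ge u^p|\nabla u|^q$ classically. Near the pole, $f=cr^a$, so $u$ is $C^1$ on $\R^n$ with $\nabla u(0)=0$ (since $a>0$), and $u\ge L+\int_0^\infty f$ is bounded, continuous and positive. Local integrability of $u^p|\nabla u|^q\approx r^{aq}$ near $0$ in dimension $n$ is precisely $aq+n>0$, that is, \eqref{eq:localintegrabilitymodel}. For $q<0$ the gradient vanishes only at the pole, a null set, which is consistent with the admissibility convention.

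The main obstacle is passing from the classical inequality on $\R^n\setminus\{0\}$ to the weak inequality \eqref{eq:weak} across the pole. I would test with $\psi\zeta_\varepsilon$, where $\zeta_\varepsilon$ is a cutoff vanishing on $B_\varepsilon$ and equal to $1$ outside $B_{2\varepsilon}$ with $|\nabla\zeta_\varepsilon|\le 2/\varepsilon$. Integrating by parts on the region where $\zeta_\varepsilon>0$ gives the inequality with an error term
\[
\int\psi|\nabla u|^{m-1}|\nabla\zeta_\varepsilon|\lesssim \varepsilon^{a(m-1)}\varepsilon^{-1}\varepsilon^{n}\to0,
\]
because $n\ge2$, $a>0$ and $m>1$. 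The flux term converges by dominated convergence, using the boundedness of $|\nabla u|^{m-1}$, and the source term converges by monotone convergence, using the $L^1_{\loc}$ bound. Equivalently, one can note that the radial flux $A|u'|^{m-2}u'\to0$ at $r=0$.
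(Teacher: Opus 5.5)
Your proposal is correct and follows essentially the paper's route: you interpolate the logarithmic derivatives $(n-1)/r$ and $A_\infty'/A_\infty$ by a partition of unity whose middle piece dominates the right-hand side of \eqref{eq:areainequality}, integrate to obtain $A$, and then check $C^1$ regularity, the integrability condition $n+aq>0$, and the absence of a pole contribution (your cutoff $\zeta_\varepsilon$ is the volume-integral version of the paper's small-sphere flux bound $O(\varepsilon^{n-1+a(m-1)})$). Two small fixes are needed: state $\supp\chi_0\subset(0,r_0)$ and $\supp\chi_\infty\subset(r_1,\infty)$ explicitly, since $\Lambda\ge G$ depends on them, and correct the bound near the pole to $0<u\le u(0)=L+\int_0^\infty f<\infty$ (you wrote $u\ge$).
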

	
	\begin{proof}
		Denote the right-hand side of \eqref{eq:areainequality} by
		\[
		H(r)=u(r)^pf(r)^{-(m-1-q)}-(m-1)\frac{f'(r)}{f(r)}.
		\]
		Since $u,f>0$ on $(0,\infty)$, the function $H$ is smooth there.
		Put $b_0=(n-1)/r$ and $b_\infty=A_\infty'/A_\infty$.
		Extend the latter smoothly to $(0,\infty)$;
		its values outside the region near infinity do not affect the construction.
		Choose smooth functions $\chi_0,\chi_\infty\in[0,1]$ such that
		$\chi_0=1$ for sufficiently small $r$, with support contained in $(0,r_0)$,
		and $\chi_\infty=1$ for sufficiently large $r$, with support contained in $(r_1,\infty)$.
		Their supports may be chosen disjoint, so $\chi_0+\chi_\infty\le1$.
		Define
		\begin{equation}\label{eq:glueformula}
			b=\chi_0 b_0+\chi_\infty b_\infty
			+(1-\chi_0-\chi_\infty)(H+1).
		\end{equation}
		On the support of $\chi_0$, we have $b_0\ge H$;
		on the support of $\chi_\infty$, we have $b_\infty\ge H$;
		and the function used in the third term is also greater than $H$.
		Thus $b\ge H$ everywhere.
		Choose $\bar r>0$ in the region where $\chi_0=1$, and define
		\[
		A(r)=\omega_{n-1}\bar r^{\,n-1}
		\exp\!\left(\int_{\bar r}^r b(s)\dd s\right).
		\]
		Then $A'/A=b$, and $A=\omega_{n-1}r^{n-1}$ near the origin.
		Near infinity, $b=b_\infty$, so $A/A_\infty$ is a positive constant.
		Taking $\psi=(A/\omega_{n-1})^{1/(n-1)}$ gives the required
		smooth complete metric \eqref{eq:modelmetric}.
		Since $b\ge H$, the desired inequality holds for $r>0$.
		
		It remains to check that the pole introduces no inadmissible singular source.
		Near the origin,
		\[
		u(r)=u(0)-\frac{c}{a+1}r^{a+1},
		\qquad |\nabla u|=cr^a\longrightarrow0.
		\]
		Because $a>0$, the radial function is differentiable at the pole with zero gradient.
		Its gradient is continuous, so $u\in C^1(M)$.
		Moreover, $u(0)>0$, and hence $u^p$ is bounded above and below by
		positive constants near the pole.
		The local integral of the source term is comparable to
		\[
		\int_0^{r_0}r^{n-1+aq}\dd r
		\]
		which is finite by \eqref{eq:localintegrabilitymodel}.
		When $q<0$, the pole is a zero-gradient point, but it is a set of measure zero;
		the calculation above verifies admissibility in the improper-integral sense.
		
		Finally, integrate by parts against a nonnegative smooth test function on the
		region with $B_\varepsilon$ removed.
		The absolute value of the flux through the small sphere about the pole is bounded by
		\[
		\|\psi_0\|_\infty A(\varepsilon)f(\varepsilon)^{m-1}
		=O\!\left(\varepsilon^{n-1+a(m-1)}\right)
		\longrightarrow0
		\]
		where $\psi_0$ denotes the test function.
		The source term passes to the limit by local integrability,
		and the flux paired with the test gradient on the left is locally integrable as well.
		Letting $\varepsilon\downarrow0$ therefore yields the weak inequality on the
		entire model, with no additional Dirac term.
		The approximation argument given earlier extends this inequality to Lipschitz tests,
		completing the proof.
	\end{proof}

	\subsection{An audit of the radial flux and pole conditions}\label{subsec:radialaudit}
	We record the two checks on which every model construction depends.
	For a decreasing radial profile with $f=-u'>0$ and spherical area
	$A$, the distributional flux is $-A f^{m-1}$.  Consequently, on
	$r>0$ the supersolution inequality is equivalent to
	\begin{equation}\label{eq:auditedflux}
		(A f^{m-1})'\ge A u^p f^q,
		\qquad\text{or equivalently}\qquad
		\frac{A'}A+(m-1)\frac{f'}f\ge u^p f^{q-m+1}.
	\end{equation}
	In particular, a strict asymptotic estimate for the ratio of the
	left side of the first inequality to $A u^p f^q$ yields the desired
	inequality for all sufficiently large radii; no asymptotic equality
	alone is used to deduce a global inequality.
	
	At the pole, let $A(r)=\omega_{n-1}r^{n-1}$, $f(r)=cr^a$ and
	$u(0)>0$.  Direct differentiation yields
	\begin{equation}\label{eq:pole-audit}
		\frac{-\Delta_m u}{f^q}
		=c^{m-1-q}\bigl(n-1+a(m-1)\bigr)
		r^{a(m-1-q)-1}.
	\end{equation}
	If $a(m-1-q)<1$, this tends to $+\infty$ as $r\downarrow0$,
	while $u(r)^p\to u(0)^p>0$.  Thus the supersolution inequality
	holds on a punctured neighborhood of the pole, regardless of the
	sign of $p$ or $q$.  At the same time,
	\begin{equation}\label{eq:pole-source-audit}
		\int_{B_\varepsilon}u^p|\nabla u|^q\,d\mu
		\asymp\int_0^\varepsilon r^{n-1+aq}\,dr <\infty
		\quad\Longleftrightarrow\quad n+aq>0.
	\end{equation}
	The flux of the punctured ball is
	$A(\varepsilon)f(\varepsilon)^{m-1}
	=O(\varepsilon^{n-1+a(m-1)})\to0$.
	These are independent conditions: the first checks the differential
	inequality, the second the admissible source, and the third excludes
	a Dirac mass.  They apply unchanged when $q<0$, since the pole has
	measure zero.  In each model below the pole exponent is chosen to
	satisfy both $a(m-1-q)<1$ and $n+aq>0$.
	
	\subsection{Models beyond every finite logarithmic threshold}
	We prove Theorem~\ref{thm:logsharp}.
	All formulas in this subsection are prescribed only for sufficiently
	large $r$, where every logarithm is positive. Put
	\[
	P_N(r)=\prod_{j=1}^{N-1}L_j(r),\qquad T(r)=L_N(r),
	\qquad T'(r)=\frac1{rP_N(r)}.
	\]
	Empty products equal $1$. Prescribe
	\begin{equation}\label{eq:logsharparea}
		A_\infty(r)=r^{D_i-1}P_N(r)^{B_i}T(r)^b.
	\end{equation}
	For the decreasing functions below, write $f=-u'>0$ and
	$J=A_\infty f^{m-1}$. By \eqref{eq:radialoperator}, the required
	inequality is exactly
	\begin{equation}\label{eq:fluxreaction}
		J'(r)\ge A_\infty(r)u(r)^p f(r)^q.
	\end{equation}
	We verify this with a strict inequality near infinity in each region.
	
	\paragraph{The region $G_1$.}
	Set $h=p+q-m+1>0$ and $\beta=(m-q)/h>0$.
	Choose
	\[
	\frac1h<a<\frac b{m-1},\qquad
	u(r)=\varepsilon r^{-\beta}P_N(r)^{-1/h}T(r)^{-a},
	\quad\varepsilon>0.
	\]
	This choice is possible because $b>B_1=(m-1)/h$.
	Since $D_1=m+(m-1)\beta$, differentiation gives
	\[
	f=\frac ur\,B(r),\qquad
	B(r)=\beta+\frac1h\sum_{j=1}^{N-1}
	\frac1{\prod_{l=1}^jL_l(r)}
	+\frac a{P_N(r)T(r)}\longrightarrow\beta.
	\]
	Let $\kappa=b-a(m-1)>0$. Exact cancellation of the power of $r$
	and the first $N-1$ logarithmic factors gives
	\[
	J=\varepsilon^{m-1}B(r)^{m-1}T(r)^\kappa.
	\]
	Here $B'=O(1/(rL_1(r)^2))$.
	For every fixed $N$,
	$P_N(r)T(r)/L_1(r)^2\to0$.
	It follows that differentiation of $B$ is of lower order and
	\[
	J'\sim\varepsilon^{m-1}\beta^{m-1}\kappa
	\frac{T^{\kappa-1}}{rP_N}.
	\]
	On the other hand, the identities
	$D_1-1-p\beta-q(\beta+1)=-1$ and
	$B_1-(p+q)/h=-1$ yield
	\[
	A_\infty u^pf^q
	\sim\varepsilon^{p+q}\beta^q
	\frac{T^{b-a(p+q)}}{rP_N}.
	\]
	Consequently,
	\begin{equation}\label{eq:G1logratio}
		\frac{J'}{A_\infty u^pf^q}
		\sim\varepsilon^{-h}\beta^{m-1-q}\kappa T^{ah-1}
		\longrightarrow\infty.
	\end{equation}
	This proves \eqref{eq:fluxreaction} near infinity, including when
	$q$ is negative.
	
	\paragraph{The region $G_3$.}
	Write $d=q-m+1\in(0,1)$ and choose
	\[
	\frac1d<a<\frac b{m-1},\qquad
	f(r)=C r^{-1/d}P_N(r)^{-1/d}T(r)^{-a},\qquad
	u(r)=L+\int_r^\infty f(t)\dd t,
	\]
	where $L,C>0$. The tail integral converges because $1/d>1$;
	in particular, $u^p\to L^p$.
	As $D_3-1=(m-1)/d$, setting $\kappa=b-a(m-1)>0$ gives exactly
	\[
	J=C^{m-1}T^\kappa,\qquad
	J'=C^{m-1}\kappa\frac{T^{\kappa-1}}{rP_N}.
	\]
	It follows that
	\begin{equation}\label{eq:G3logratio}
		\frac{J'}{A_\infty u^pf^q}
		\sim C^{-d}L^{-p}\kappa T^{ad-1}\longrightarrow\infty.
	\end{equation}
	
	\paragraph{The region $G_2$.}
	Here $q\ge m$, while $p$ is arbitrary.
	Choose $1<a<b/(m-1)$ and set
	\[
	f(r)=\frac C{rP_N(r)T(r)^a},\qquad
	u(r)=L+\int_r^\infty f(t)\dd t
	=L+\frac C{a-1}T(r)^{1-a}.
	\]
	The equality follows from $T'=1/(rP_N)$.
	Again $u^p\to L^p$, and for $\kappa=b-a(m-1)>0$,
	$J=C^{m-1}T^\kappa$. Therefore
	\begin{equation}\label{eq:G2logratio}
		\frac{J'}{A_\infty u^pf^q}
		\sim C^{m-1-q}L^{-p}\kappa\,
		r^{q-m}P_N(r)^{q-m}T(r)^{a(q-m+1)-1}
		\longrightarrow\infty.
	\end{equation}
	For $q=m$ this follows from $a>1$; for $q>m$ the positive power
	of $r$ already suffices.
	
	\paragraph{Globalization and admissibility.}
	In all three cases, choose a smooth positive interpolation of $f$ to
	$f(r)=cr^{a_0}$ near the pole, where
	\[
	a_0>0,\qquad a_0(m-1-q)<1,\qquad n+a_0q>0.
	\]
	If $m-1-q\le0$, the middle restriction is automatic.
	Otherwise choose $a_0$ small; if $q<0$, also require
	$a_0<n/(-q)$. Such a choice always exists.
	For $G_1$ use $u=\int_r^\infty f$; this agrees exactly with the
	prescribed tail. For $G_2,G_3$ retain the positive constant $L$.
	In each case $u(0)$ is finite and positive.
	The right-hand side $H$ of \eqref{eq:areainequality} satisfies
	\[
	rH(r)=O\bigl(r^{1-a_0(m-1-q)}\bigr)-(m-1)a_0
	\longrightarrow-(m-1)a_0<n-1.
	\]
	Thus the Euclidean area density satisfies the inequality near the pole.
	Equations~\eqref{eq:G1logratio}--\eqref{eq:G2logratio} verify it
	near infinity. Lemma~\ref{lem:glue} now gives a smooth complete metric,
	a globally admissible $C^1$ function, and no singular measure at the pole.
	
	Finally, $D_i>0$ and logarithmic differentiation of
	$W(r)=P_N(r)^{B_i}T(r)^b$ gives $rW'/W\to0$.
	Since $A=C_0r^{D_i-1}W(r)$ near infinity, differentiation of
	$R^{D_i}W(R)/D_i$ proves
	\[
	V(R)\sim\frac{C_0}{D_i}R^{D_i}W(R).
	\]
	This is \eqref{eq:logsharpvolume} and completes the proof.
	
	\subsection{Endpoint models on the homogeneous line}
	\paragraph{The entropy endpoint for $p>0$.}
	Given $H>0$, consider the $n$-dimensional hyperbolic space of constant
	curvature $-a^2$, where $a=H/(n-1)$.
	Its spherical area is
	\[
	A_H(r)=\omega_{n-1}\left(\frac{\sinh(ar)}a\right)^{n-1},
	\]
	so
	$\lim_{R\to\infty}R^{-1}\log V(R)=(n-1)a=H$.
	There is a smooth Busemann function $b$ satisfying
	\begin{equation}\label{eq:busemann}
		|\nabla b|=1,\qquad \Delta b=H.
	\end{equation}
	These identities can also be checked directly in global horospherical coordinates:
	the metric is $\dd b^2+e^{2ab}|{\dd y}|^2$,
	the volume density is $e^{(n-1)ab}$,
	and hence $\Delta b=\partial_b\log(e^{(n-1)ab})=H$.
	
	Set $u=e^{-tb}$, where $t>0$.
	Then
	\[
	\nabla u=-tu\nabla b,\qquad
	|\nabla u|=tu,\qquad
	|\nabla u|^{m-2}\nabla u=-t^{m-1}u^{m-1}\nabla b.
	\]
	Taking the divergence of the last expression gives
	\begin{align}
		-\Delta_m u
		&=t^{m-1}\left[
		\ip{\nabla(u^{m-1})}{\nabla b}+u^{m-1}\Delta b\right]\notag\\
		&=t^{m-1}[H-(m-1)t]u^{m-1}.
		\label{eq:hyperboliccompute}
	\end{align}
	In $G_5$, we have $p+q=m-1$, and therefore
	\[
	u^p|\nabla u|^q=t^q u^{m-1}.
	\]
	The inequality is thus equivalent to
	\[
	H\ge(m-1)t+t^{q-m+1}=(m-1)t+t^{-p}.
	\]
	When $p>0$, choose the minimizer $t=t_*$ found above and take $H=c_*(m,p)$.
	This gives a smooth global solution of the corresponding equality.
	The solution is strictly positive and nonconstant, with a gradient that never vanishes,
	so the construction includes $q<0$.
	It proves that the strict threshold in Theorem \ref{thm:G5} cannot be relaxed when $p>0$.
	
	\paragraph{The entropy endpoint for $p=0$.}
	Here $q=m-1$ and $c_*=1$.
	Since the infimum in \eqref{eq:Fquotient} is approached only as $t\downarrow0$,
	the preceding exponential function does not directly give a nonconstant
	equality solution at $H=1$.
	Instead, prescribe the following radial model near infinity:
	\begin{equation}\label{eq:pzero-tail}
		A_\infty(r)=e^r r^B,\qquad
		u(r)=r^{-\alpha},\qquad
		\alpha>0,\quad B\ge(m-1)(\alpha+1).
	\end{equation}
	Then $f=-u'=\alpha r^{-\alpha-1}$,
	$f'/f=-(\alpha+1)/r$, and $A_\infty'/A_\infty=1+B/r$.
	By \eqref{eq:radialflux},
	\begin{equation}\label{eq:pzero-ratio}
		\frac{-\Delta_m u}{|u'|^{m-1}}
		=1+\frac{B-(m-1)(\alpha+1)}r\ge1.
	\end{equation}
	Thus $-\Delta_m u\ge|\nabla u|^{m-1}$ holds near infinity.
	
	For a global construction, choose a positive smooth function $f$,
	with $f(r)=r$ near the origin and $f(r)=\alpha r^{-\alpha-1}$ for sufficiently large $r$,
	with a positive smooth interpolation in between.
	Define $u(r)=\int_r^\infty f$.
	The resulting function is still exactly $r^{-\alpha}$ near infinity,
	while $u=u(0)-r^2/2$ near the origin.
	Since $p=0$, the required area inequality is
	\[
	\frac{A'}A\ge1-(m-1)\frac{f'}f.
	\]
	Near the pole, its right-hand side is $1-(m-1)/r$,
	whereas the Euclidean area density gives $(n-1)/r$.
	Thus the condition holds for sufficiently small $r$.
	It holds near infinity by \eqref{eq:pzero-ratio}.
	Lemma \ref{lem:glue} now gives a globally positive admissible weak solution.
	
	The area near infinity changes only by a positive constant factor, so
	$V(R)\sim C e^R R^B$.
	For example, this follows by comparing the derivatives of the defining volume
	integral and $e^R R^B$, since
	\[
	\frac{(e^R R^B)'}{e^R R^B}=1+\frac BR\longrightarrow1.
	\]
	The volume entropy is therefore exactly $1$.
	This shows that $h_1<1$ cannot be replaced by $h_1\le1$ when $p=0$ either.
	Because the example contains an additional polynomial factor,
	it does not determine whether the more precise bound $V(R)\le Ce^R$ is sufficient.
	All assertions of Theorem \ref{thm:G5} are now proved.
	
	For the constant source coefficient in Proposition \ref{prop:coefficient},
	when $p>0$ take $t=t_\lambda$ and $H=c_{*,\lambda}(m,p)$ in hyperbolic space
	to obtain the corresponding equality solution.
	When $p=0$, replace the area density in \eqref{eq:pzero-tail}
	by $A_\infty(r)=e^{\lambda r}r^B$.
	The right-hand side of \eqref{eq:pzero-ratio} then becomes
	$\lambda+[B-(m-1)(\alpha+1)]/r\ge\lambda$,
	and the volume entropy equals $\lambda$.
	Thus the strict entropy threshold with a constant source coefficient
	has the same endpoint verification.
	
	\subsection{The zero-entropy boundary and models in the subhomogeneous regions}
	\paragraph{Arbitrary positive entropy in $G_4$.}
	Given $H>0$, use again the hyperbolic space in \eqref{eq:busemann}.
	Set
	\[
	u=L+e^{-tb},\qquad L,t>0.
	\]
	The constant $L$ does not affect the gradient or the $m$-Laplace operator.
	By \eqref{eq:hyperboliccompute},
	\[
	-\Delta_m u
	=|\nabla u|^{m-1}[H-(m-1)t].
	\]
	In $G_4$, we have $p<0,q=m-1$.
	Since $u\ge L$, it follows that $u^p\le L^p$.
	First choose $L$ sufficiently large that $L^p<H$, and then take
	$0<t<(H-L^p)/(m-1)$.
	It follows that
	\[
	-\Delta_m u
	\ge L^p|\nabla u|^{m-1}
	\ge u^p|\nabla u|^{m-1}.
	\]
	This gives a nonconstant positive solution for every positive entropy $H$.
	Consequently, the $G_4$ assertion of Theorem \ref{thm:G46}
	cannot uniformly allow any fixed range of positive entropy.
	
	\paragraph{Every higher power in $G_6$.}
	Put
	\[
	d_0=m-1-q>0,\qquad k=d_0+1=m-q.
	\]
	Given any $\ell>k$ and $\lambda>0$, prescribe
	\begin{equation}\label{eq:G6powerchoices}
		\alpha=\frac{\ell-1}{d_0}>1,\qquad
		A_\infty(r)=e^{\lambda r^\ell},\qquad
		f(r)=C r^{-\alpha}
	\end{equation}
	near infinity, where $C>0$ will be chosen below.
	For a given $L>0$, the corresponding function near infinity is
	\begin{equation}\label{eq:G6powersolution}
		u(r)=L+\int_r^\infty Ct^{-\alpha}\dd t
		=L+\frac C{\alpha-1}r^{1-\alpha}.
	\end{equation}
	By \eqref{eq:radialflux},
	\begin{align}
		\frac{-\Delta_m u}{f^q}
		&=f^{d_0}\left[
		\lambda\ell r^{\ell-1}-(m-1)\frac\alpha r\right]\notag\\
		&=C^{d_0}r^{-\alpha d_0}
		\left[\lambda\ell r^{\ell-1}-(m-1)\frac\alpha r\right]\notag\\
		&=C^{d_0}\left[\lambda\ell-(m-1)\alpha r^{-\ell}\right].
		\label{eq:G6powercompute}
	\end{align}
	The last equality uses $\alpha d_0=\ell-1$.
	Since $u(r)^p\to L^p$, it suffices to choose
	\begin{equation}\label{eq:modelconstantchoice}
		\lambda\ell C^{d_0}>L^p,
	\end{equation}
	to obtain the strict inequality for all sufficiently large $r$.
	This choice is possible whether $p$ is positive, zero, or negative.
	
	To ensure the global weak-solution requirements, prescribe $f(r)=cr^a$
	near the origin, choosing
	\begin{equation}\label{eq:choosepole}
		0<a<1/d_0,\qquad n+aq>0.
	\end{equation}
	If $q\ge0$, the second condition is automatic.
	If $q<0$, it suffices to require additionally that $a<n/(-q)$.
	Thus both conditions can always be satisfied simultaneously.
	Connect this $f$ to the expression near infinity in \eqref{eq:G6powerchoices}
	through a positive smooth interpolation, and define $u=L+\int_r^\infty f$.
	Near the pole, $u$ has a finite positive limit,
	so the right-hand side of \eqref{eq:areainequality} is
	\[
	H(r)=O(r^{-ad_0})-\frac{(m-1)a}{r}.
	\]
	Since $ad_0<1$, we have
	$rH(r)\to-(m-1)a<n-1$.
	Thus the Euclidean area density $A_0$ near the pole satisfies $A_0'/A_0\ge H$.
	The corresponding condition near infinity follows from \eqref{eq:G6powercompute}.
	Lemma \ref{lem:glue} yields a global model and a positive $C^1$ weak solution.
	The construction need not produce a $C^2$ solution, but it meets the solution
	class used in this paper.
	
	We finally verify its volume growth.
	Near infinity, $A(r)=C_0e^{\lambda r^\ell}$, and
	\begin{equation}\label{eq:powervolumeasymptotic}
		V(R)\sim
		\frac{C_0}{\lambda\ell}
		R^{1-\ell}e^{\lambda R^\ell}.
	\end{equation}
	Indeed, the derivative of the expression on the right, divided by
	$C_0e^{\lambda R^\ell}$, tends to $1$.
	Since both comparison functions tend to infinity, a one-dimensional derivative
	comparison gives the stated asymptotic relation.
	Taking logarithms,
	\[
	\log V(R)=\lambda R^\ell+(1-\ell)\log R+O(1)
	\sim\lambda R^\ell.
	\]
	This proves the first assertion of Proposition \ref{prop:models}.
	
	\paragraph{An existence example with a logarithmic factor at the critical power.}
	Now fix $\beta>d_0$ and $\lambda>0$, and prescribe
	\begin{equation}\label{eq:G6logmodel}
		A_\infty(r)=\exp\bigl(\lambda r^k(\log r)^\beta\bigr),
		\qquad
		f(r)=\frac C{r(\log r)^{\beta/d_0}}
	\end{equation}
	near infinity.
	Since $\beta/d_0>1$,
	\[
	\int_r^\infty f(t)\dd t
	=\frac{C}{\beta/d_0-1}(\log r)^{1-\beta/d_0}<\infty.
	\]
	We may therefore take
	\begin{equation}\label{eq:G6logsolution}
		u(r)=L+\frac{C}{\beta/d_0-1}(\log r)^{1-\beta/d_0},
		\qquad L>0.
	\end{equation}
	A direct calculation gives
	\begin{align}
		\frac{A_\infty'}{A_\infty}
		&=\lambda r^{k-1}
		\left[k(\log r)^\beta+\beta(\log r)^{\beta-1}\right],
		\notag\\
		\frac{f'}f&=-\frac1r-\frac{\beta}{d_0r\log r},\qquad
		f^{d_0}=C^{d_0}r^{-d_0}(\log r)^{-\beta}.
		\label{eq:G6logderivatives}
	\end{align}
	Using $k-1=d_0$ and substituting into \eqref{eq:radialflux}, we obtain
	\begin{align}
		\frac{-\Delta_m u}{f^q}
		={}&\lambda C^{d_0}
		\left(k+\frac{\beta}{\log r}\right)\notag\\
		&-(m-1)C^{d_0}r^{-k}(\log r)^{-\beta}
		\left(1+\frac{\beta}{d_0\log r}\right)
		\longrightarrow\lambda kC^{d_0}.
		\label{eq:G6logcompute}
	\end{align}
	Choose $\lambda kC^{d_0}>L^p$.
	Since $u^p\to L^p$, the strict inequality holds near infinity.
	Using the same choice near the pole as in \eqref{eq:choosepole}
	and applying Lemma \ref{lem:glue}, we obtain a globally positive admissible weak solution.
	
	Write $\Phi(R)=\lambda R^k(\log R)^\beta$.
	Then $\Phi'(R)>0$ for sufficiently large $R$,
	and $\Phi''(R)/\Phi'(R)^2\to0$.
	Differentiating $e^{\Phi(R)}/\Phi'(R)$ gives
	\[
	\left(\frac{e^{\Phi(R)}}{\Phi'(R)}\right)'
	=e^{\Phi(R)}
	\left(1-\frac{\Phi''(R)}{\Phi'(R)^2}\right).
	\]
	Therefore $V(R)\sim C_0e^{\Phi(R)}/\Phi'(R)$, while
	$\log\Phi'(R)=O(\log R+\log\log R)=o(\Phi(R))$.
	It follows that
	$\log V(R)\sim\lambda R^k(\log R)^\beta$,
	which proves the second assertion of Proposition \ref{prop:models}.
	
	\subsection{A sharper classical benchmark at critical growth}
	\begin{proposition}[A classical stochastic-completeness benchmark]\label{prop:stochastic-slice}
		Suppose $m=2$, $q=0$, and $p<1$.
		If $M$ is stochastically complete, then there is no positive
		$C^2$ function satisfying $-\Delta u\ge u^p$.
		In particular, the curvature-free volume condition
		\[
		\int^\infty\frac{R}{\log(e+V(R))}\,\mathrm dR=\infty
		\]
		is sufficient. Consequently $V(R)\le C\exp(C_1R^2(\log R)^\beta)$,
		$\beta\le1$, is sufficient in this slice.
	\end{proposition}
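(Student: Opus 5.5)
The plan is to transform $u$ into a bounded function that satisfies a uniformly elliptic-type inequality, and then apply the weak (Omori–Yau type) maximum principle at infinity that characterizes stochastic completeness. Recall the characterization of Pigola–Rigoli–Setti: $M$ is stochastically complete if and only if, for every $v\in C^2(M)$ with $v^*=\sup_M v<\infty$, there is a sequence $x_k$ with $v(x_k)\to v^*$ and $\Delta v(x_k)<1/k$. Equivalently, there is no bounded-above $C^2$ function with $\Delta v\ge c>0$ on the whole upper level set $\{v>v^*-\varepsilon\}$. The volume condition $\int^\infty R/\log V(R)\,\mathrm dR=\infty$ implying stochastic completeness is Grigor'yan's classical test \cite{Grig1999}. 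The final claim then reduces to a computation: for $V\le C\exp(C_1R^2(\log R)^\beta)$ the integrand is $\gtrsim 1/(R(\log R)^\beta)$, which has a divergent integral exactly when $\beta\le1$.

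For the core statement, suppose $u>0$ is $C^2$ with $-\Delta u\ge u^p$. I split according to the sign of $1-p$; since $p<1$, I may take the transformed power to be positive. Set $v=-u^{1-p}$ when $0<1-p$ (all $p<1$). Then $v<0$ is bounded above by $0$, and
\[
\Delta v=-(1-p)u^{-p}\Delta u+(1-p)p\,u^{-p-1}|\nabla u|^2 .
\]
The first term is $\ge(1-p)$. The second term has the sign of $p$. For $0\le p<1$ it is nonnegative, so $\Delta v\ge 1-p>0$ everywhere. This contradicts the weak maximum principle directly: no sequence can have $\Delta v(x_k)\to$ something $\le0$.

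For $p<0$ the gradient term is negative, so I would instead use a different transform. My first attempt is $v=-\log$-type or $v=-u^{\gamma}$ with a better exponent. A cleaner route is this: $-\Delta u\ge u^p$ with $p<0$ gives $-\Delta u\ge u^p$, where $u^p$ is large when $u$ is small. Since $u$ is superharmonic and positive, consider $w=-u$, which is bounded above by $0$ with $\Delta w\ge u^p$. Along any sequence with $w(x_k)\to\sup w=-\inf u$ we have $u(x_k)\to\inf u=:\iota\ge0$, hence $u^p(x_k)\ge(\iota+1)^p>0$ if $\iota$ is finite. This again contradicts $\Delta w(x_k)<1/k$. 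The same trick in fact works for all $p\le0$, and also for $0<p<1$ if $\inf u>0$. So the main obstacle is the case $0<p<1$ with $\inf u=0$, which is exactly why the transform $v=-u^{1-p}$ is needed there, where the sign of the gradient term is favourable.

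I expect the main subtlety to be citing the right form of the weak maximum principle. Stochastic completeness gives the inequality $\Delta v(x_k)<1/k$ only for $C^2$ functions bounded above, which is why the proposition is stated for $C^2$ solutions. I would check that both transformed functions are $C^2$, which holds because $u>0$.
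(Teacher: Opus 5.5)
Your proposal is correct and is essentially the paper's argument. You use the transform $w=-u^{1-p}$, giving $\Delta w\ge 1-p>0$, for $0\le p<1$, and the function $w=-u$ for $p<0$; your maximizing-sequence version of the latter is equivalent to the paper's choice of an upper level set $\{w>\gamma\}$ on which $\Delta w\ge(-\gamma)^p$, and both cases are closed with the Pigola--Rigoli--Setti weak maximum principle and Grigor'yan's volume test. The exploratory aside about ``$-\log$-type'' transforms can be dropped, since the $w=-u$ argument already settles $p<0$.
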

	\begin{proof}
		We use the equivalence between stochastic completeness and the weak
		maximum principle at infinity \cite{PRS2005}, together with
		Grigor'yan's volume test \cite{Grig1999}.
		If $0\le p<1$, let $\theta=1-p$ and $w=-u^\theta$.
		The function $w$ is bounded above, and the chain rule gives
		\[
		\Delta w=-\theta u^{-p}\Delta u
		+\theta p u^{-p-1}|\nabla u|^2\ge\theta>0,
		\]
		contradicting that maximum principle.
		If $p<0$, take $w=-u$, write $w^*=\sup_Mw\le0$, and choose
		$\gamma<w^*$ with $\gamma<0$.
		On the nonempty upper level set $\{w>\gamma\}$, one has
		$0<u<-\gamma$, whence
		$\Delta w\ge u^p\ge(-\gamma)^p>0$, giving the same contradiction.
		The asserted logarithmic volume bound makes the volume-test integrand
		bounded below by a positive multiple of
		$1/[R(\log R)^\beta]$, whose integral diverges for $\beta\le1$.
	\end{proof}
	
	\begin{remark}
		This proposition is recalled as a classical benchmark, and is stated
		for $C^2$ supersolutions. It shows why the general condition
		$h_{m-q}(M)=0$ should not be described as a complete classification
		at critical growth. In the slice $m=2$, $q=0$, $p<1$, the model
		construction with $\log V(R)\sim\lambda R^2(\log R)^\beta$,
		$\beta>1$, matches the first logarithmic threshold above.
	\end{remark}
	
	For this comparison the existence models can also be made smooth at
	the pole. Keep the $m=2,q=0$ tails of \eqref{eq:G6logmodel}, with
	$\beta>1$, and choose $f_c=cr$ near zero. A positive smooth interpolation
	depending linearly on $c$ gives $u_c(0)=K_0+K_1c$, where $K_0,K_1>0$
	are independent of $c$, and leaves the tail unchanged.
	In the Euclidean region $-\Delta u_c=nc$.
	If $0\le p<1$, choose $c$ large enough that
	$nc\ge(K_0+K_1c)^p$; then $u_c\le u_c(0)$ verifies the source inequality.
	If $p<0$, choose $nc\ge L^p$ and use $u_c\ge L$.
	Lemma~\ref{lem:glue} therefore applies with this explicit pole check.
	Near the pole $u_c=u_c(0)-cr^2/2$, so both the metric and the
	supersolution are smooth everywhere. Thus the logarithmic threshold
	in Proposition~\ref{prop:stochastic-slice} is matched by examples
	within its classical solution class.
	
	\subsection{Relations among the conclusions and unresolved endpoints}
	The proofs retain three different kinds of information.
	In $G_1,G_3$, annular mass estimates for a fixed density yield integral conditions,
	from which iterated logarithms in the volume bounds are read directly through
	a one-dimensional integral.
	In $G_4,G_6$, a negative-power parameter increasing with the radius produces an
	exponentially small factor, with its scale determined by
	$a^{1/h}/R^{(m-q)/h}$.
	In $G_5$, the logarithmic transformation produces an exact one-dimensional
	minimum that is compared directly with the decay rate of the exponential weight.
	These mechanisms reflect different homogeneity structures.
	A parameter choice suitable for one region therefore cannot be transferred
	mechanically to another.
	
	For the first three regions, slower factors can still be inserted after the
	first critical logarithm.
	This is consistent with the criticality of the first logarithmic exponent itself.
	For instance, every fixed $\varepsilon>0$ satisfies
	\[
	(\log\log R)^B=o((\log R)^\varepsilon).
	\]
	Thus an existence model at the volume scale $R^D(\log R)^{B+\varepsilon}$
	does not automatically rule out a nonexistence result at the smaller scale
	\[
	R^D(\log R)^B(\log\log R)^B.
	\]
	The integral criteria used here keep these distinct levels of comparison separate.
	
	For the models, every finite logarithmic level in $G_1,G_2,G_3$ is
	sharp by Theorem~\ref{thm:logsharp}. The strict entropy threshold in $G_5$ is verified by
	endpoint examples, whereas the results in $G_6$ match from the two sides
	only at the level of the power scale.
	The restriction $\beta>d_0$ in \eqref{eq:G6logmodel} comes from
	$\int^\infty f<\infty$ and is a requirement of the present bounded radial construction.
	It must not be regarded as a necessary existence condition for all models
	or all positive solutions.
	Accordingly, no such nonexistence assertion for all $G_6$ parameters
	and $\beta\le d_0$ is proved here. The classical slice just discussed
	has additional information from stochastic completeness.
	
	Finally, this paper does not transfer the smooth-manifold chain rule directly to graphs.
	Discrete product identities, the choice of distance, and the structure of
	second-order operators require separate treatment.
	The assumptions used in \cite{HS2023,DLMQS2027} reflect these differences.
	The proofs here close without curvature assumptions or distance-Laplacian bounds
	because the continuous chain rule in
	\eqref{eq:basicnegative}--\eqref{eq:basicordinary}
	is combined with first-order cutoff estimates.
	To extend the fixed-density annular summation argument to other operators
	or geometric settings, one should first verify the corresponding local estimates,
	rather than merely replace the volume notation.

	\section{Weighted and Sobolev extensions}\label{sec:weighted}
	\subsection{Smooth positive measures}
	Let $\Phi\in C^\infty(M)$ and equip the same complete metric with
	the measure $\dd\nu=e^{-\Phi}\dd\mu$. Define
	\[
	\Delta_{m,\Phi}u
	=e^\Phi\operatorname{div}
	(e^{-\Phi}|\nabla u|^{m-2}\nabla u),\qquad
	V_\nu(R)=\nu(B(o,R)).
	\]
	The weak inequality is defined by
	\begin{equation}\label{eq:weightedweak}
		\int_M |\nabla u|^{m-2}\langle\nabla u,\nabla\psi\rangle\dd\nu
		\ge\int_Mu^p|\nabla u|^q\psi\dd\nu,
		\quad 0\le\psi\in\operatorname{Lip}_c(M).
	\end{equation}
	The positive $C^1$ solution and singular-power conventions are those
	of Definition~\ref{def:solution}, with local integrability relative
	to $\nu$.
	
	\begin{theorem}[Weighted extension without curvature conditions]\label{thm:weighted}
		Theorems~\ref{thm:G1}, \ref{thm:G3}, \ref{thm:G46}, and
		\ref{thm:G5}, Proposition~\ref{prop:G2}, and their nonexistence
		corollaries hold for \eqref{eq:weightedweak} after replacing $V$ by
		$V_\nu$. The entropy constant $c_*(m,p)$, and its source-coefficient
		version \eqref{eq:cstar-lambda}, are unchanged.
		No restriction on $\Phi$, its gradient, its Hessian, or any
		Bakry--\'Emery curvature tensor is required.
	\end{theorem}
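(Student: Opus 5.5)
The plan is to re-run every nonexistence proof verbatim with $\dd\mu$ replaced by $\dd\nu$, and to check that only three structural facts were ever used. The first is the weak inequality tested against nonnegative Lipschitz functions of the form $u^{-a}\phi^b$, $u^{1-m}\psi$, and $e^{-\sigma r}\eta_R^m$. The second is the pointwise chain rule and pointwise factorizations (H\"older, Young) of the integrands. The third is the bound $\int_{\A}|\nabla\phi|^s\dd\nu\le (L/R)^s\nu(\A)\le (L/R)^sV_\nu(4R)$ together with the monotonicity of $V_\nu$. None of these involves second derivatives of $r$, of $\phi$, or of $\Phi$. The density $e^{-\Phi}$ therefore enters only as a positive weight multiplying every integrand, and no curvature of $(M,\nu)$ is ever invoked.

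I will carry this out in order. First, I note that the approximation argument in Section~\ref{sec:prelim} passing from smooth to Lipschitz tests holds unchanged, since $e^{-\Phi}$ is bounded above and below on compacts. Hence \eqref{eq:basicnegative} and \eqref{eq:basicordinary} hold with $\dd\nu$, because their derivation only expands $\nabla(u^{-a}\phi^b)$ pointwise. Second, for Theorems~\ref{thm:G1} and \ref{thm:G3}, the H\"older steps \eqref{eq:G1energy}, \eqref{eq:G1second}, \eqref{eq:G1pzero}, and \eqref{eq:G3local} are H\"older inequalities on the measure space $(M,\nu)$. They yield the same local estimates with $\mu(\A)$ replaced by $\nu(\A)$. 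Lemma~\ref{lem:annular} is purely measure-theoretic: it uses $F\in L^1_{\loc}(\nu)$, masses $\int_{B_{R_j}}F\dd\nu$, the annulus inclusion, and monotonicity of $V_\nu$. So it applies with $V_\nu$ in place of $V$. Third, for Proposition~\ref{prop:G2}, the energy bound \eqref{eq:capacityenergy} and the cutoff construction \eqref{eq:capacityweights}--\eqref{eq:capacityzero} go through with $b_j=V_\nu(2R_j)/R_j^m$. Fourth, for Theorem~\ref{thm:G46}, the normalization on the compact set $E$ gives $c_E=\int_E v^{-1}z^q\dd\nu>0$, and \eqref{eq:largeexact} holds with $\int|\nabla\phi|^T\dd\nu\le R^{-T}V_\nu(2R)$. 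The explicit constants \eqref{eq:largeconstants} do not involve the measure, so the choice $a=\varepsilon R^k$ and the lower-growth conclusion are unchanged.

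For Theorem~\ref{thm:G5}, the substitution $w=\log u$ in \eqref{eq:logweak} is pointwise. The test $e^{-\sigma r}\eta_R^m$ gives \eqref{eq:exp-first} with $\dd\nu$, using only $|\nabla r|\le1$. The one-variable infimum \eqref{eq:Fquotient} and Young's inequality \eqref{eq:exactyoungentropy} are measure-free, so $c_*(m,p)$ and $c_{*,\lambda}$ are unchanged. The estimate \eqref{eq:entropyestimate} then reads $\le(2/d)^m e^{-\sigma R}V_\nu(R+1)$. The source-coefficient version follows exactly as in the proof of Proposition~\ref{prop:coefficient}. The final step in each case, deducing $u$ constant or a contradiction from $f=0$ $\nu$-a.e., uses that $\nu$ and $\mu$ have the same null sets.

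The main obstacle is not analytic but a matter of bookkeeping. I must ensure that no step secretly used Riemannian volume: for example $\mu(\partial B_R)=0$, the identification of $L^1_{\loc}$ classes, and the positivity claim $h_s\ge0$. The delicate point is the admissibility convention for $q<0$, namely that local $\nu$-integrability still forces $|\nabla u|>0$ a.e. I will verify each of these by mutual absolute continuity of $\nu$ and $\mu$ with locally bounded densities.
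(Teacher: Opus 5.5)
Your proposal is correct and follows essentially the same route as the paper. Local two-sided bounds on $e^{-\Phi}$ preserve test admissibility, and \eqref{eq:basicnegative}, \eqref{eq:basicordinary}, and \eqref{eq:logweak} hold verbatim with $\dd\nu$, after which the only geometric inputs are $|\nabla r|\le1$, $\nu(\A_R)\le V_\nu(4R)$, and monotonicity of $V_\nu$. Your extra bookkeeping on null sets and on the $q<0$ admissibility convention, via mutual absolute continuity of $\nu$ and $\mu$, is a harmless expansion of the paper's one-line remark.
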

	
	\begin{proof}
		On every compact set, the smooth density $e^{-\Phi}$ is bounded above
		and below by positive constants. The local test approximation and
		admissibility arguments of Section~\ref{sec:prelim} therefore remain
		valid. Substitution of $u^{-a}\phi^b$ into \eqref{eq:weightedweak}
		gives exactly \eqref{eq:basicnegative}, with $\dd\nu$ as measure;
		the ordinary test gives \eqref{eq:basicordinary} in the same way.
		H\"older and Young inequalities hold for $\nu$, and every geometric
		estimate used thereafter is either $|\nabla r|\le1$ or
		$\nu(\mathcal A_R)\le V_\nu(4R)$.
		The mass recurrence requires only monotonicity and finiteness of
		$V_\nu(R)$, both of which hold.
		
		For the subhomogeneous argument, the fixed compact set on which
		$|\nabla u|>0$ has positive $\nu$ measure, so normalization and
		the large-parameter estimate are unchanged. On the homogeneous line,
		testing with $u^{1-m}\psi$ gives \eqref{eq:logweak} relative to $\nu$.
		The minimum of $(m-1)t+t^{-p}$ is unchanged, and the exponentially
		weighted cutoff has the same first derivative estimate. Hence the
		entropy proof and its exact constant carry over.
		At no step is the density differentiated or a curvature estimate used.
	\end{proof}
	
	The endpoint and logarithmic examples already belong to this class
	with $\Phi=0$, so the corresponding universal sharpness statements
	remain valid for the larger class of smooth weighted manifolds.
	The extension does not assert a characterization of existence from
	weighted volume on every fixed weighted geometry.
	
	\subsection{Locally bounded Sobolev supersolutions}
	\begin{proposition}\label{prop:sobolev}
		The nonexistence and Liouville conclusions of Theorems~\ref{thm:G1},
		\ref{thm:G3}, \ref{thm:G46}, \ref{thm:G5}, and
		Proposition~\ref{prop:G2}, including their volume corollaries, remain valid for
		$u\in W^{1,m}_{\loc}(M)$ such that
		\[
		0<\operatorname*{ess\,inf}_K u
		\le \operatorname*{ess\,sup}_K u<\infty
		\]
		for every relatively compact set $K$, provided
		$u^p|\nabla u|^q\in L^1_{\loc}(M)$ and the weak inequality holds for
		nonnegative compactly supported Lipschitz tests.
		This statement also holds for the measure $\nu$ above.
		Constancy in this class is understood almost everywhere.
	\end{proposition}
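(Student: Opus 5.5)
The plan is to re-run every proof of Sections~\ref{sec:integral}--\ref{sec:entropy} verbatim for the Sobolev class. The point is that those proofs use only three ingredients. The first is admissibility of the tests $u^{-a}\phi^b$, $u^{1-m}\psi$ and $e^{-\sigma r}\eta_R^m$. The second is the chain rule producing \eqref{eq:basicnegative}, \eqref{eq:basicordinary} and \eqref{eq:logweak}. The third is H\"older/Young inequalities together with the annular Lemma~\ref{lem:annular}. None of these uses $C^1$ regularity beyond local two-sided bounds on $u$ and local integrability of $|\nabla u|^m$ and of $f$.

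First I verify admissibility. On a compact $K\supset\supp\phi$ we have $c_K\le u\le C_K$ a.e. Let $G$ be any function that is $C^1$ on a neighbourhood of $[c_K,C_K]$; after modification outside this interval, $G$ may be taken globally Lipschitz. Then $G(u)\in W^{1,m}(K)\cap L^\infty$ and $\nabla G(u)=G'(u)\nabla u$ a.e. by the Sobolev chain rule. Hence $\psi=u^{-a}\phi^b$ lies in $W^{1,m}$ with compact support and is bounded, but it is not Lipschitz.

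So the second step is to extend the weak inequality from Lipschitz tests to nonnegative bounded compactly supported $W^{1,m}$ tests. I approximate $\psi$ by mollification in charts with a partition of unity; this keeps the approximants nonnegative, uniformly bounded, supported in a fixed compact set, and convergent in $W^{1,m}$ and a.e. The left side of \eqref{eq:weak} converges because $|\nabla u|^{m-1}\in L^{m/(m-1)}_{\loc}$. On the right, $f\psi_k\to f\psi$ by dominated convergence, since $f\in L^1_{\loc}$ and $\psi_k$ is uniformly bounded. This step is where the hypothesis $f\in L^1_{\loc}$ and the essential bounds are used. I regard it as the main technical point, since it is what replaces the $C^1$ local positivity invoked in Section~\ref{sec:prelim}. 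With it, \eqref{eq:basicnegative}, \eqref{eq:basicordinary} and \eqref{eq:logweak} hold exactly as before, with $w=\log u\in W^{1,m}_{\loc}$. The cutoffs $\phi_R$, the multiscale cutoff in Proposition~\ref{prop:G2}, and $e^{-\sigma r}\eta_R^m$ are Lipschitz, so the rest goes through.

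Then I would run the region-by-region proofs. For Theorems~\ref{thm:G1}, \ref{thm:G3}, \ref{thm:G5} and Proposition~\ref{prop:G2}, all remaining manipulations are pointwise a.e. algebra and integral inequalities, so they conclude $f=0$ a.e., or $\nabla w=0$ a.e., or $\nabla u=0$ a.e. For $q>0$ this gives $\nabla u=0$ a.e. on the connected manifold $M$; a $W^{1,1}_{\loc}$ function with vanishing gradient is a.e. constant, by working in charts and using connectedness. For $q\le0$ the contradiction arguments are unchanged, since admissibility again forces $|\nabla u|>0$ a.e. in the singular range.

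The one place needing modification is Step~1 of Theorem~\ref{thm:G46}, which picks a compact $E$ on which $|\nabla u|$ is bounded below by continuity. Instead I argue as follows. If $u$ is not a.e. constant, then $\{|\nabla u|>0\}$ has positive measure. Hence for some $c'>0$ and some relatively compact ball, the set $E=\{|\nabla u|\ge c'\}\cap B$ has positive finite measure. Normalising by $K_0=\operatorname{ess\,sup}_B u$ gives $0<v\le1$ a.e. on $E$, and
$$c_E=\int_E v^{-1}|\nabla v|^q\in(0,\infty).$$
For $q<0$, finiteness follows from $|\nabla v|\ge c'/K_0$ on $E$; for $q\ge 0$ it follows from local integrability. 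The rest of that proof only uses $c_E$ as a fixed positive mass, so it carries over. Finally, the weighted version follows because $e^{-\Phi}$ is locally bounded above and below, so the same approximation argument applies with $\nu$ in place of $\mu$, as in Theorem~\ref{thm:weighted}.
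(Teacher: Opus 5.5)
Your proposal is correct and follows essentially the same route as the paper. Both extend the weak inequality to bounded, compactly supported nonnegative $W^{1,m}$ tests by chart-wise smoothing, using H\"older on the flux and dominated convergence on the source. Both then apply the chain rule on the local essential range of $u$, replace the continuity argument in Step~1 of Theorem~\ref{thm:G46} by a positive-measure set where $|\nabla u|$ is bounded below, and deduce a.e.\ constancy from a vanishing weak gradient. The differences are cosmetic: the paper adds a Lipschitz truncation of the approximants and takes $E=\{1/j\le|\nabla u|\le j\}$ with two-sided gradient bounds, whereas you get finiteness of $c_E$ for $q\ge0$ from local integrability, which works equally well.
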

	
	\begin{proof}
		A bounded nonnegative compactly supported $W^{1,m}$ function can be
		approximated by nonnegative Lipschitz functions supported in one fixed
		larger compact set, uniformly bounded and convergent strongly in
		$W^{1,m}$ and almost everywhere. To obtain such a sequence, use local
		smoothing and a partition of unity, and then apply a Lipschitz
		truncation above a fixed bound larger than the essential supremum.
		The flux is locally in $L^{m/(m-1)}$, so H\"older's inequality passes
		the left-hand side to the limit. Uniform boundedness and dominated
		convergence against the locally integrable source pass the right-hand
		side to the limit.
		The weak formulation therefore admits these bounded Sobolev tests.
		
		On the local essential range of $u$, every power and logarithm used
		in the proof has bounded derivative. In particular,
		$u^{-a}\phi^b$ and $u^{1-m}\psi$ are bounded admissible Sobolev tests.
		All factorizations, energy estimates, and the fixed-density mass
		recurrence follow unchanged.
		In the subhomogeneous proof, a nonconstant Sobolev function has
		$|\nabla u|\ne0$ on a set of positive measure in some relatively
		compact set. For some integer $j$, the measurable subset
		$E=\{1/j\le |\nabla u|\le j\}$ in that set has positive measure.
		The local bounds on $u$ provide the same finite positive fixed mass
		used in \eqref{eq:fixedmass}. Thus continuity of the gradient is not
		needed there. Finally, vanishing of the weak gradient on a connected
		manifold implies constancy almost everywhere.
		Smooth positive densities are locally comparable to $\mu$, so the
		same approximation and argument apply to $\nu$.
	\end{proof}
	
	\subsection{Basepoints, finite volume, and positive source factors}
	The choice of basepoint is immaterial for all the criteria.
	If $d(o,o')=c$, the inclusions
	$B(o',R-c)\subset B(o,R)\subset B(o',R+c)$ compare the volumes.
	For large $R$, the powers of $R$ and $R\pm c$ are comparable, so
	a change of variable transfers divergence of each integral in
	\eqref{eq:unified} between the two basepoints.
	The same inclusions and $(R\pm c)^s/R^s\to1$ show that the lower
	rates $h_s$ are independent of the basepoint.
	These observations use no doubling assumption.
	
	\begin{corollary}[Finite weighted volume]\label{cor:finitevolume}
		If $\nu(M)<\infty$, then the Liouville property holds for
		$-\Delta_{m,\Phi}u\ge u^p|\nabla u|^q$ for every $p,q\in\mathbb R$
		in either admissible class used above.
	\end{corollary}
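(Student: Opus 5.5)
The plan is to feed the finiteness of $\nu(M)$ into the weighted criteria of Theorem~\ref{thm:weighted}, one parameter region at a time. Since the six regions in \eqref{eq:regions} cover the whole $(p,q)$-plane, it suffices to verify each region's volume hypothesis with $V$ replaced by $V_\nu$. Proposition~\ref{prop:sobolev} then transfers every conclusion to the locally bounded Sobolev class, so both admissible classes are handled at once. The only input is
\[
0<V_\nu(R_*)\le V_\nu(R)\le \nu(M)<\infty\qquad (R\ge R_*),
\]
so $V_\nu$ is bounded above and below by positive constants near infinity.

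For the three integral criteria (regions $G_1$, $G_3$, and $G_2$ via Proposition~\ref{prop:G2}), the integrand of \eqref{eq:unified} with $V_\nu$ is at least
\[
\nu(M)^{-1/B_i}\,R^{D_i/B_i-1},\qquad i=1,2,3 .
\]
Its integral diverges because $D_i/B_i>0$; indeed $D_i/B_i$ equals $(mp+q)/(m-1)$, $m/(m-1)$ and $q/(m-1)$ respectively, each positive in its region. For $G_1$ I should recheck that $mp+q>0$, which holds since $mp+q=(m-1)p+(p+q)$ with $p\ge0$ and $p+q>m-1$. For the exponential criteria, $\log V_\nu(R)$ stays bounded, so $h_s(M)=0$ for every $s>0$ in the weighted sense. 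This gives the hypothesis $h_{m-q}=0$ of Theorem~\ref{thm:G46} in $G_4\cup G_6$ (where $m-q\ge1>0$), and gives $h_1=0<c_*(m,p)$ for Theorem~\ref{thm:G5} in $G_5$, with $c_*>0$ from \eqref{eq:cstar}.

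The step needing the most care is making sure every $(p,q)$ really lands in a region whose theorem is stated for general $p$ and $q$. The region $G_2$ has $p$ arbitrary, and $G_3$ and $G_4$ are the only places with $p<0$ and $q\ge m-1$; the partition \eqref{eq:regions} already settles this. A second point is the basepoint independence of $V_\nu$, but bounded volume is clearly basepoint-free. Finally, if $\nu(M)<\infty$ forced $M$ to be compact there would be a conflict with noncompactness, but the corollary only uses the volume bounds, so no issue arises.
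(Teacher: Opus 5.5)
Your proposal is correct and is essentially the paper's proof. In $G_1,G_2,G_3$ the integrals in \eqref{eq:unified} diverge because $B_i>0$, $D_i/B_i>0$ and $V_\nu\le\nu(M)$, while in $G_4,G_5,G_6$ every $h_s$ vanishes, so Theorems~\ref{thm:G46} and \ref{thm:G5} apply through Theorem~\ref{thm:weighted}; your appeal to Proposition~\ref{prop:sobolev} for the Sobolev class and your region-by-region check of the hypotheses just spell out what the paper leaves implicit.
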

	\begin{proof}
		In $G_1,G_2,G_3$ the integrals \eqref{eq:unified} diverge because
		$D_i,B_i>0$ and $V_\nu$ is bounded above.
		In the other three regions all $h_s$ vanish, which implies the
		stated conclusions by Theorems~\ref{thm:G46} and \ref{thm:G5}.
	\end{proof}
	
	More generally, if the source is $a(x)u^p|\nabla u|^q$ with a
	measurable coefficient $a(x)\ge\lambda>0$, every conclusion for the
	constant coefficient $\lambda$ applies by direct comparison in the
	weak formulation, provided the source is locally integrable.
	The only resulting change in the stated volume criteria is the
	homogeneous-line constant \eqref{eq:cstar-lambda}.
	
	\begingroup
	\interlinepenalty=10000

	\endgroup
\end{document}